\sloppy\pagestyle{plain}
\newtheorem{theorem}[equation]{Theorem}
\newtheorem{proposition}[equation]{Proposition}
\newtheorem{lemma}[equation]{Lemma}
\newtheorem{corollary}[equation]{Corollary}
\newtheorem*{maintheorem*}{Main Theorem}
\theoremstyle{definition}
\newtheorem{example}[equation]{Example}
\theoremstyle{remark}
\newtheorem{remark}[equation]{Remark}
\newcommand{\mumu}{\boldsymbol{\mu}}
\author{Ivan Cheltsov}
\title{Kummer quartic double solids}
\thanks{Throughout this paper, all varieties are assumed to be projective and defined over~$\mathbb{C}$.}
\address{\emph{Ivan Cheltsov}
\newline
\textnormal{The University of Edinburgh,  Edinburgh, Scotland}
\newline
\textnormal{\texttt{I.Cheltsov@ed.ac.uk}}}
\begin{document}

\begin{abstract}
We study equivariant birational geometry of (rational) quartic double solids ramified over (singular) Kummer surfaces.
\end{abstract}

\maketitle

A Kummer quartic surface is an irreducible normal surface in $\mathbb{P}^3$ of degree $4$ that has the~maximal possible number of $16$ singular points,
which are ordinary double~singularities.
Any~such surface is the~Kummer variety of the~Jacobian surface of a smooth genus $2$ curve.
Vice versa, the~Jacobian surface of a smooth genus $2$ curve admits a~natural involution
such that the~quotient surface is a Kummer quartic surface in $\mathbb{P}^3$.

\begin{figure}[h!]
\copyrightbox[r]{\includegraphics[scale=1.0]{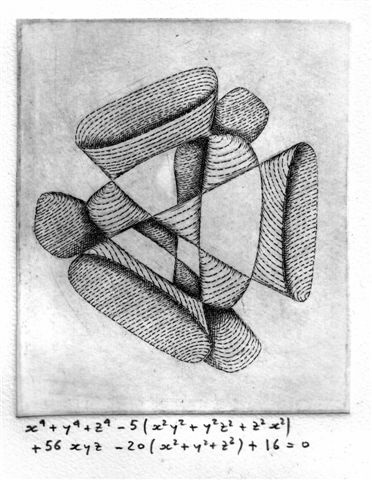}}{\textcopyright Patrice Jeener, La Motte Chalancon, France.}
\caption{A Kummer surface by Patrice Jeener.}
\label{figure:Kummer-surface}
\end{figure}

Let $\mathscr{S}$ be a Kummer surface in $\mathbb{P}^3$, and let $\mathscr{C}$ be the~smooth genus $2$ curve such that
\begin{equation}
\label{equation:Kummer-Jacobian}
\mathscr{S}\cong\mathrm{J}\big(\mathscr{C}\big)/\langle\tau\rangle,
\end{equation}
where $\tau$ is the~involution of the~Jacobian $\mathrm{J}(\mathscr{C})$ that sends a point $P$ to the~point $-P$.
Recall from \cite{Kummer1864,Hudson,Jessop,Nieto,Gonzalez-Dorrego,Dolgachev,Dolgachev2020} that the~surface $\mathscr{S}$ can be given by the~equation
\begin{multline}
\label{equation:Kummer-quartic-surface}
a(x_0^4+x_1^4+x_2^4+x_3^4)+2b(x_0^2x_1^2+x_2^2x_3^2)+\\
+2c(x_0^2x_2^2+x_1^2x_3^2)+2d(x_0^2x_3^2+x_1^2x_2^2)+4ex_0x_1x_2x_3=0
\end{multline}
for some $[a:b:c:d:e]\in\mathbb{P}^4$ such that
\begin{equation}
\label{equation:Segre}
a(a^2+e^2-b^2-c^2-d^2)+2bcd=0.
\end{equation}

Note that the~curve $\mathscr{C}$ is hyperelliptic, and equation \eqref{equation:Segre} defines a cubic threefold in $\mathbb{P}^4$,
which is projectively equivalent to \emph{the Segre cubic threefold}~\cite{Nieto,Dolgachev}.

Using a~formula from the~book \cite{CasselsFlynn} implemented in Magma \cite{Magma},
we can easily extract an~equation of the~surface $\mathscr{S}$ from the~curve~$\mathscr{C}$.
However, the~resulting equation may differ from~\eqref{equation:Kummer-quartic-surface}.
For instance, if $\mathscr{C}$ is the~unique genus $2$ curve such that $\mathrm{Aut}(\mathscr{C})\cong\mumu_2.\mathfrak{S}_4$, then $\mathscr{C}$ is isomorphic to the~curve
$$
\big\{z^2=xy(x^4-y^4)\big\}\subset\mathbb{P}(1,1,3)
$$
where $x$, $y$, $z$ are homogeneous coordinates on $\mathbb{P}(1,1,3)$ of weights $1$, $1$, $2$, respectively.
In~this case, Magma produces the~following Kummer quartic surface:
$$
\big\{x_0^4+2x_0^2x_2x_3-2x_0^2x_2^2+4x_0x_2^2x_2-4x_0x_2x_3^2+x_2^2x_3^2-2x_2x_2^2x_3+x_2^4=0\}\subset\mathbb{P}^3,
$$
which is projectively equivalent to the~surface given by \eqref{equation:Kummer-quartic-surface} with
parameters $a=b=1$, $c=d=-1$, $e=-4$ that do not satisfy~\eqref{equation:Segre}.
But this surface is projectively equivalent to
\begin{equation}
\label{equation:Kummer-surface-S4}
\big\{x_0^4+x_1^4+x_2^4+x_3^4-4ix_0x_1x_2x_3=0\big\}\subset\mathbb{P}^3,
\end{equation}
which is given by \eqref{equation:Kummer-quartic-surface} with parameters $a=1$, $b=c=d=0$, $e=-i$ that do satisfy \eqref{equation:Segre}.
Here, we use the~following Magma code provided to us by Michela Artebani:
\begin{verbatim}
    R<x>:=PolynomialRing(Rationals());
    C:=HyperellipticCurve(x^5-x);
    GroupName(GeometricAutomorphismGroup(C));
    KummerSurfaceScheme(C);
\end{verbatim}

It is not very difficult to recover the~hyperelliptic curve $\mathscr{C}$ from the~quartic surface~$\mathscr{S}$.
Indeed,  $\mathbb{P}^3$ contains $16$ planes $\Pi_1,\ldots,\Pi_{16}$, called \emph{tropes}, such that
$\mathscr{S}\vert_{\Pi_i}=2\mathcal{C}_i$ for each $i$, where $\mathcal{C}_i$ is a smooth conic.
One can show (see, for example, \cite{GriffithsHarris}) that
\begin{itemize}
\item each trope $\Pi_i$ contains exactly six singular points of the~surface $\mathscr{S}$,
\item each singular point of the~surface $\mathscr{S}$ is contained in six planes among $\Pi_1,\ldots,\Pi_{16}$.
\end{itemize}
Moreover, for every conic $\mathcal{C}_i$, there exists a double cover $\mathscr{C}\to\mathcal{C}_i$
which is ramified over the~six points $\mathcal{C}_i\cap\mathrm{Sing}(\mathscr{S})$.
This gives us an algorithm how to recover $\mathscr{C}$ from $\mathscr{S}$.

\begin{example}
\label{example:recover-curve-from-quartic}
Suppose that the~surface $\mathscr{S}$ is given by the~equation \eqref{equation:Kummer-quartic-surface} with
$$
\left\{\aligned
&a=2,\\
&b=-t^2-1,\\
&c=-t^2-1,\\
&d=-t^2-1,\\
&e=t^3+3t,
\endaligned
\right.
$$
where $t\in\mathbb{C}\setminus\{\pm 1,\pm\sqrt{3}i\}$. Then the~surface $\mathscr{S}$ is given by the~following equation:
\begin{equation}
\label{equation:Kummer-quartic-surface-48-50}
x_0^4+x_1^4+x_2^4+x_3^4+2(t^3+3t)x_0x_1x_2x_3=(t^2+1)(x_0^2x_1^2+x_2^2x_3^2+x_0^2x_2^2+x_1^2x_3^2+x_0^2x_3^2+x_1^2x_2^2).
\end{equation}
Its singular locus $\mathrm{Sing}(\mathscr{S})$ consists of the~following $16$ points:
\begin{align*}
[1:1:1:t], [-1:1:-1:t], [-1:-1:1:t], [1:-1:-1:t],\\
[1:1:t:1], [1:-1:t:-1], [-1:-1:t:1], [-1:1:t:-1],\\
[t:1:1:1], [t:-1:1:-1], [t:1:-1:-1], [t:-1:-1:1],\\
[1:t:1:1], [-1:t:-1:1], [1:t:-1:-1], [-1:t:1:-1].
\end{align*}
Moreover, the~tropes $\Pi_1,\ldots,\Pi_{16}$ are listed in the~following table:
\begin{center}
\renewcommand\arraystretch{1.7}
\begin{tabular}{|c||c|}
\hline
$\Pi_1=\big\{x_0+x_1+x_2+tx_3=0\big\}$& $\Pi_2=\big\{x_0-x_1+x_2-tx_3=0\big\}$ \\
\hline
$\Pi_3=\big\{x_0+x_1-x_2-tx_3=0\big\}$& $\Pi_4=\big\{x_0-x_1-x_2+tx_3=0\big\}$\\
\hline
$\Pi_5=\big\{x_0+x_1+tx_2+x_3=0\big\}$& $\Pi_6=\big\{x_0-x_1+tx_2-x_3=0\big\}$ \\
\hline
$\Pi_7=\big\{x_0-tx_2+x_1-x_3=0\big\}$& $\Pi_8=\big\{x_0-x_1-tx_2+x_3=0\big\}$\\
\hline
$\Pi_9=\big\{x_0+tx_1+x_2+x_3=0\big\}$& $\Pi_{10}=\big\{x_0-tx_1-x_2+x_3=0\big\}$ \\
\hline
$\Pi_{11}=\big\{x_0-tx_1+x_2-x_3=0\big\}$& $\Pi_{12}=\big\{x_0+tx_1-x_2-x_3=0\big\}$\\
\hline
$\Pi_{13}=\big\{tx_0+x_1+x_2+x_3=0\big\}$& $\Pi_{14}=\big\{tx_0-x_1-x_2+x_3=0\big\}$ \\
\hline
$\Pi_{15}=\big\{tx_0-x_1+x_2-x_3=0\big\}$& $\Pi_{16}=\big\{tx_0+x_1-x_2-x_3=0\big\}$\\
\hline
\end{tabular}
\end{center}
Then $\mathcal{C}_1$ is the~smooth conic
$$
\big\{x_0+x_1+x_2+tx_3=tx_1x_3+tx_2x_3+x_1^2+x_1x_2+x_2^2-x_3^2=0\big\}\subset\mathbb{P}^3.
$$
This conic contains the~following six singular points of our surface:
\begin{center}
$[1:-1:t:-1]$, $[-1:1:t:-1]$, $[t:-1:1:-1]$, \\
$[t:1:-1:-1]$, $[1:t:-1:-1]$, $[-1:t:1:-1]$.
\end{center}
Projecting from $[t:1:-1:-1]$, we get an isomorphism $\mathcal{C}_1\cong\mathbb{P}^1$
that maps these points~to
\begin{center}
$[t+1:-2]$, $[1:0]$, $[-1:1]$, $[1-t:1+t]$, $[0:1]$, $[t-1:2]$.
\end{center}
Therefore, the~hyperelliptic curve $\mathscr{C}$ is isomorphic to the~curve
$$
\big\{z^2=xy\big(x-y\big)\big((t-1)x+2y\big)\big(2x-(t+1)y\big)\big((t+1)x-(t-1)y\big)\big\}\subset\mathbb{P}(1,1,3).
$$
In particular, it follows from \cite{CGLR} or Magma computations that
$$
\mathrm{Aut}\big(\mathscr{C}\big)\cong\left\{\aligned
&\mumu_2.\mathfrak{S}_{4}\ \text{if}\ t\in\{0,\pm i, 1\pm 2i, -1\pm 2i\},\\
&\mumu_2.\mathrm{D}_{12}\ \text{if}\ t\in\{0,\pm 3\},\\
&\mumu_2\times\mathfrak{S}_3\ \text{if $t$ is general}.
\endaligned
\right.
$$
For instance, to identify $\mathrm{Aut}(\mathscr{C})$ in the~case when $t=i$, one can use the~following script:
\begin{verbatim}
    K:=CyclotomicField(4);
    R<x>:=PolynomialRing(K);
    i:=Roots(x^2+1,K)[1,1];
    t:=i;
    f:=x*(x-1)*((t-1)*x+2)*(2*x-(t+1))*((t+1)*x-(t-1));
    C:=HyperellipticCurve(f);
    GroupName(GeometricAutomorphismGroup(C));
\end{verbatim}
In this example, we assume that $t\not\in\{\pm 1,\pm\sqrt{3}i\}$, because
\begin{itemize}
\item if $t=\pm 1$ or $t=\infty$, then the~equation \eqref{equation:Kummer-quartic-surface-48-50} defines a union of $4$ planes,
\item if $t=\pm\sqrt{3}i$, the~equation \eqref{equation:Kummer-quartic-surface-48-50} defines a double quadric.
\end{itemize}
These are semistable degenerations with minimal $\mathrm{PGL}_4(\mathbb{C})$-orbits \cite[Theorem~2.4]{Shah}.
\end{example}

Let $\mathrm{Aut}(\mathbb{P}^3,\mathscr{S})$ be the~subgroup in $\mathrm{PGL}_4(\mathbb{C})$ consisting
of projective transformations that leave $\mathscr{S}$ invariant.
Then $\mathrm{Aut}(\mathbb{P}^3,\mathscr{S})$ contains a subgroup $\mathbb{H}\cong\mumu_2^4$  generated~by
$$\hspace*{-0.3cm}
A_1=\begin{pmatrix}
-1 & 0 & 0 & 0\\
0 & 1 & 0 & 0\\
0 & 0 & -1 & 0\\
0 & 0 & 0 & 1
\end{pmatrix},
A_2=\begin{pmatrix}
-1 & 0 & 0 & 0\\
0 & -1 & 0 & 0\\
0 & 0 & 1 & 0\\
0 & 0 & 0 & 1
\end{pmatrix},
A_3=\begin{pmatrix}
0 & 1 & 0 & 0\\
1 & 0 & 0 & 0\\
0 & 0 & 0 & 1\\
0 & 0 & 1 & 0
\end{pmatrix},
A_4=\begin{pmatrix}
0 & 0 & 0 & 1\\
0 & 0 & 1 & 0\\
0 & 1 & 0 & 0\\
1 & 0 & 0 & 0
\end{pmatrix}.
$$
The~action of this group on $\mathscr{S}$ is induced by translations of  $\mathrm{J}(\mathscr{C})$ by~\mbox{two-torsion} points~\cite{Klein}, so $\mathrm{Sing}(\mathscr{S})$ is an $\mathbb{H}$-orbit.
Similarly, we see that $\mathbb{H}$ acts transitively~on the~set
\begin{equation}
\label{equation:16-planes}
\Big\{\Pi_1,\Pi_2,\Pi_3,\Pi_4,\Pi_5,\Pi_6,\Pi_7,\Pi_8,\Pi_9,\Pi_{10},\Pi_{11},\Pi_{12},\Pi_{13},\Pi_{14},\Pi_{15},\Pi_{16}\Big\}.
\end{equation}

If $\mathscr{S}$ is general, then $\mathrm{Aut}(\mathbb{P}^3,\mathscr{S})=\mathbb{H}$,
and $\mathrm{Aut}(\mathscr{C})$ is generated by the~hyperelliptic involution \cite{Keum1997,Kondo}.
However, if $\mathscr{S}$ is special, then $\mathrm{Aut}(\mathbb{P}^3,\mathscr{S})$ can be larger than $\mathbb{H}$.

\begin{example}
\label{example:48-50}
Let us use assumptions and notations of Example~\ref{example:recover-curve-from-quartic}.
For~$t\in\mathbb{C}\setminus\{\pm 1,\pm\sqrt{3}i\}$, the~group $\mathrm{Aut}(\mathbb{P}^3,\mathscr{S})$ contains the~subgroup isomorphic to $\mumu_2^4\rtimes\mathfrak{S}_3$
generated by
$$
A_1,A_2,A_3,A_4,
\begin{pmatrix}
0 & 0 & 0 & 1\\
1 & 0 & 0 & 0\\
0 & 1 & 0 & 0\\
0 & 0 & 1 & 0
\end{pmatrix},
\begin{pmatrix}
0 & 1 & 0 & 0\\
1 & 0 & 0 & 0\\
0 & 0 & 1 & 0\\
0 & 0 & 0 & 1
\end{pmatrix}.
$$
In fact, this is the~whole group $\mathrm{Aut}(\mathbb{P}^3,\mathscr{S})$ if $t$ is general.
On the~other hand, if $t=0$, then it follows from \cite{Edge,Catanese} that $\mathrm{Aut}(\mathbb{P}^3,\mathscr{S})\cong\mumu_2^4\rtimes\mathrm{D}_{12}$,
and this group is generated by
$$
A_1,A_2,A_3,A_4,
\begin{pmatrix}
1 & 0 & 0 & 0\\
0 & 1 & 0 & 0\\
0 & 0 & -1 & 0\\
0 & 0 & 0 & 1
\end{pmatrix},
\begin{pmatrix}
0 & 0 & 0 & 1\\
1 & 0 & 0 & 0\\
0 & 1 & 0 & 0\\
0 & 0 & 1 & 0
\end{pmatrix},
\begin{pmatrix}
0 & 1 & 0 & 0\\
1 & 0 & 0 & 0\\
0 & 0 & 1 & 0\\
0 & 0 & 0 & 1
\end{pmatrix}.
$$
If $t=\pm i$, then $\mathscr{S}$ is the~surface \eqref{equation:Kummer-surface-S4},
and $\mathrm{Aut}(\mathbb{P}^3,\mathscr{S})\cong\mumu_2^4\rtimes\mathfrak{S}_{4}$ is generated by
$$
A_1,A_2,A_3,A_4,
\begin{pmatrix}
0 & 0 & 0 & 1\\
1 & 0 & 0 & 0\\
0 & 1 & 0 & 0\\
0 & 0 & 1 & 0
\end{pmatrix},
\begin{pmatrix}
0 & 1 & 0 & 0\\
1 & 0 & 0 & 0\\
0 & 0 & 1 & 0\\
0 & 0 & 0 & 1
\end{pmatrix},
\begin{pmatrix}
i & 0 & 0 & 0\\
0 & i & 0 & 0\\
0 & 0 & -1 & 0\\
0 & 0 & 0 & 1
\end{pmatrix}.
$$
\end{example}

\begin{example}
\label{example:Z5}
Suppose that $\mathscr{S}$ is given by the~equation \eqref{equation:Kummer-quartic-surface} with
$$
\left\{\aligned
&a=2\zeta_5^3+2\zeta_5^2+6\zeta_5-1,\\
&b=4\zeta_5^3+4\zeta_5^2-10\zeta_5+9,\\
&c=-6\zeta_5^3-6\zeta_5^2+4\zeta_5+3,\\
&d=11,\\
&e=-20\zeta_5^3+24\zeta_5^2-16\zeta_5+10.
\endaligned
\right.
$$
Then $\mathrm{Aut}(\mathscr{C})\cong\mumu_2\times\mumu_5$ and $\mathrm{Aut}(\mathbb{P}^3,\mathscr{S})\cong\mumu_2^4\rtimes\mumu_5$,
which is generated by
$$
A_1,A_2,A_3,A_4,
\begin{pmatrix}
-i & 0 & 0 & i\\
0 & 1 & 1 & 0\\
1 & 0 & 0 & 1\\
0 & -i & i & 0
\end{pmatrix}.
$$
\end{example}

Looking at Examples~\ref{example:recover-curve-from-quartic}, \ref{example:48-50} and \ref{example:Z5},
we can see a relation between $\mathrm{Aut}(\mathbb{P}^3,\mathscr{S})$ and $\mathrm{Aut}(\mathscr{C})$.
In fact, this relation holds for all Kummer surfaces in $\mathbb{P}^3$ by the~following well-known~result,
about which we learned from Igor Dolgachev.

\begin{lemma}
\label{lemma:Dolgachev}
Let $\iota\in\mathrm{Aut}(\mathscr{C})$ be the~hyperelliptic involution of the~curve $\mathscr{C}$. Then
$$
\mathrm{Aut}\big(\mathbb{P}^3,\mathscr{S}\big)\cong\mumu_2^4\rtimes\big(\mathrm{Aut}(\mathscr{C})/\langle\iota\rangle\big).
$$
\end{lemma}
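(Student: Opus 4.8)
The plan is to realize both sides as quotients of a single group of automorphisms of the abelian surface $A:=\mathrm{J}(\mathscr{C})$ and then invoke the Torelli theorem. Recall that the morphism $A\to\mathscr{S}\subset\mathbb{P}^3$ is given by the complete linear system $|2\Theta|$, where $\Theta$ is the principal polarization, and that it identifies $\mathscr{S}$ with $A/\langle\tau\rangle$. Let $\widetilde{G}$ be the group of all automorphisms of $A$ that normalize $\langle\tau\rangle$ and preserve the class $2\Theta$. Every such automorphism descends to $\mathscr{S}$ and acts on $\mathbb{P}^3=\mathbb{P}(H^0(A,2\Theta)^\vee)$, so there is a natural homomorphism $\Phi\colon\widetilde{G}\to\mathrm{Aut}(\mathbb{P}^3,\mathscr{S})$. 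First I would pin down the structure of $\widetilde{G}$. Writing an automorphism as $t_a\circ h$ with $h\in\mathrm{Aut}(A,0)$ a group automorphism and $t_a$ a translation, the requirement that $\langle\tau\rangle$ be normalized forces $a\in A[2]$, since $t_a\tau t_a^{-1}=\tau\circ t_{-2a}$; and, as $a\in A[2]=K(2\Theta)$ gives $t_a^*(2\Theta)\cong 2\Theta$, preservation of $2\Theta$ is equivalent to $h^*\Theta\equiv\Theta$ in $\mathrm{NS}(A)$. Hence $\widetilde{G}=A[2]\rtimes\mathrm{Aut}(A,0,\Theta)$, where $\mathrm{Aut}(A,0,\Theta)$ denotes the polarized automorphisms and $A[2]\cong\mumu_2^4$ is precisely the group $\mathbb{H}$ of $2$-torsion translations described above (with $\mathrm{Aut}(A,0,\Theta)$ acting on $A[2]$ by $h\,t_a\,h^{-1}=t_{h(a)}$).

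Next I would compute $\ker\Phi$. Since $\mathscr{S}$ spans $\mathbb{P}^3$, an element $\psi\in\widetilde{G}$ lies in $\ker\Phi$ iff it induces the identity on $\mathscr{S}$, that is, iff it is a deck transformation of $A\to\mathscr{S}$, so $\ker\Phi=\{\mathrm{id},\tau\}=\langle\tau\rangle$. As $\tau=-1\in\mathrm{Aut}(A,0,\Theta)$ fixes every $2$-torsion point, $\langle\tau\rangle$ is central in $\widetilde{G}$ and meets $A[2]$ trivially, whence
\[
\mathrm{im}\,\Phi\;\cong\;\widetilde{G}/\langle\tau\rangle\;\cong\;A[2]\rtimes\big(\mathrm{Aut}(A,0,\Theta)/\langle\tau\rangle\big).
\]
By the Torelli theorem for the hyperelliptic genus $2$ curve $\mathscr{C}$, the natural map $\mathrm{Aut}(\mathscr{C})\to\mathrm{Aut}(A,0,\Theta)$ is an isomorphism under which the hyperelliptic involution $\iota$ corresponds to $-1=\tau$. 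Substituting this into the display identifies $\mathrm{im}\,\Phi$ with $\mumu_2^4\rtimes\big(\mathrm{Aut}(\mathscr{C})/\langle\iota\rangle\big)$, the right-hand side of the lemma.

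It remains to prove that $\Phi$ is surjective, and this is the main point. Given $\phi\in\mathrm{Aut}(\mathbb{P}^3,\mathscr{S})$, I would reconstruct a lift to $A$ canonically. Such $\phi$ permutes the $16$ nodes of $\mathscr{S}$, hence lifts to an automorphism $\widetilde{\phi}$ of the minimal resolution $X\to\mathscr{S}$, which is the Kummer K3 surface of $A$, and $\widetilde{\phi}$ permutes the $16$ disjoint $(-2)$-curves $N_1,\dots,N_{16}$ lying over the nodes. The surface $A$ is recovered from $X$ intrinsically: the class $\tfrac12\sum_i N_i$ is the unique square root of $\mathcal{O}_X\big(\sum_i N_i\big)$ in $\mathrm{Pic}(X)$, because $\mathrm{Pic}$ of a K3 surface is torsion-free, and the associated double cover, after contracting its ramification curves, is exactly $A$. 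Since $\widetilde{\phi}$ fixes $\sum_i N_i$ and $\mathrm{Pic}(X)$ is torsion-free, it automatically fixes this square root, so it lifts to the double cover and descends to an automorphism $\psi\in\mathrm{Aut}(A)$ commuting with $\tau$, the two lifts differing exactly by $\tau$. By construction $\psi$ normalizes $\langle\tau\rangle$, and because $\phi$ preserves $\mathcal{O}_{\mathscr{S}}(1)$, whose pullback to $A$ is $2\Theta$, it preserves $2\Theta$; thus $\psi\in\widetilde{G}$ and $\Phi(\psi)=\phi$. The hard part is precisely this reconstruction: one must know that a Kummer surface carries an even set of $16$ disjoint $(-2)$-curves, i.e.\ that $\sum_i N_i$ is $2$-divisible, and that torsion-freeness of $\mathrm{Pic}(X)$ renders the lifting of $\widetilde{\phi}$ automatic. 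Everything else is the bookkeeping of the semidirect-product extension above.
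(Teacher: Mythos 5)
Your proof follows the same overall strategy as the paper's: identify $\mathscr{S}$ with $\mathrm{J}(\mathscr{C})/\langle\tau\rangle$ via the morphism given by $|2\Theta|$, assemble the semidirect product from the $2$-torsion translations and the (polarized) automorphisms coming from $\mathrm{Aut}(\mathscr{C})$, and check that the kernel of the induced action on $\mathbb{P}^3$ is exactly $\langle\tau\rangle$; your explicit detour through $\widetilde{G}=A[2]\rtimes\mathrm{Aut}(A,0,\Theta)$ and the Torelli theorem is just a more spelled-out packaging of what the paper does directly with $\mathrm{Aut}(\mathscr{C})$. The one place where you genuinely diverge is surjectivity. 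The paper asserts in one line that any $g\in\mathrm{Aut}(\mathbb{P}^3,\mathscr{S})$ lifts to $\mathrm{J}(\mathscr{C})$ preserving $[2\mathscr{C}]$, then composes with a $2$-torsion translation so that the class $[\mathscr{C}]$ is preserved and uses that a principal polarization has a unique effective divisor in its class to land back in $\mathrm{Aut}(\mathscr{C})$. You instead construct the lift: pass to the minimal resolution (the Kummer K3), use that the sum of the sixteen disjoint $(-2)$-curves is $2$-divisible with a \emph{unique} square root because the Picard group of a K3 surface is torsion-free, lift to the associated double cover, and contract back to $\mathrm{J}(\mathscr{C})$. Your version supplies the justification that the paper leaves implicit, at the cost of invoking the theory of even sets of nodes on Kummer surfaces; the paper's version is shorter but asks the reader to accept the existence of the lift. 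Both arguments are correct.
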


\begin{proof}
Let us identify $\mathscr{C}$ with the~theta divisor in $\mathrm{J}(\mathscr{C})$ via the~Abel--Jacobi~map whose base point is one of the~fixed points of the~involution $\iota$ (one of the~six Weierstrass points).
Then the~linear system $|2\mathscr{C}|$ gives a morphism $\mathrm{J}(\mathscr{C})\to\mathbb{P}^3$ whose image is the~surface~$\mathscr{S}$.
Taking the~Stein factorization of the~morphism $\mathrm{J}(\mathscr{C})\to\mathscr{S}$,
we get the~isomorphism \eqref{equation:Kummer-Jacobian}.

On the~other hand, elements in $\mathrm{Aut}(\mathscr{C})$ give automorphisms in $\mathrm{Aut}(\mathrm{J}(\mathscr{C}))$  that leave
the~linear system $|2\mathscr{C}|$ invariant. This gives us a homomorphism
$\mathrm{Aut}(\mathscr{C})\to\mathrm{Aut}(\mathbb{P}^3,\mathscr{S})$,
whose kernel is the~hyperelliptic involution $\iota$, since $\iota$ induces the~involution $\tau\in\mathrm{Aut}(\mathrm{J}(\mathscr{C}))$.

The image of the~group $\mathrm{Aut}(\mathscr{C})$ in $\mathrm{Aut}(\mathbb{P}^3,\mathscr{S})$ normalizes the~subgroup $\mathbb{H}$,
because elements in $\mathbb{H}$ are induced by the~translations of the~Jacobian $\mathrm{J}(\mathscr{C})$ by two-torsion points.
This gives a~monomorphism
$\vartheta\colon \mumu_2^4\rtimes(\mathrm{Aut}(\mathscr{C})/\langle\iota\rangle)\to\mathrm{Aut}(\mathbb{P}^3,\mathscr{S})$.

We claim that $\vartheta$ is an epimorphism.
Indeed, the~action of an element $g\in\mathrm{Aut}(\mathbb{P}^3,\mathscr{S})$~on the~surface $\mathscr{S}$
lifts to its its action on the~Jacobian $\mathrm{J}(\mathscr{C})$ that leaves $[2\mathscr{C}]$ invariant,
so~composing $g$ with some $h\in\mathbb{H}$, we obtain an~element $g\circ h$ that preserves the~class~$[\mathscr{C}]$.
Thus, since $[\mathscr{C}]$ is a principal polarization, the~composition $g\circ h$ preserves $\mathscr{C}$,
and it acts faithfully on $\mathscr{C}$, since $\mathscr{C}$ generates $\mathrm{J}(\mathscr{C})$.
This gives $g\circ h\in\mathrm{im}(\vartheta)$, so $\vartheta$ is surjective.
\end{proof}

Since $\mathrm{Aut}(\mathscr{C})$ is isomorphic to a group among $\mumu_2$, $\mumu_2^2$, $\mathrm{D}_{8}$, $\mathrm{D}_{12}$, $\mumu_2.\mathrm{D}_{12}$, $\mumu_2.\mathfrak{S}_{4}$,~$\mumu_2\times\mumu_5$,
we conclude that $\mathrm{Aut}(\mathbb{P}^3,\mathscr{S})$ is isomorphic to one of the~following groups:
\begin{center}
$\mumu_2^4$, $\mumu_2^4\rtimes\mumu_2$,  $\mumu_2^4\rtimes\mumu_2^2$, $\mumu_2^4\rtimes\mathfrak{S}_3$,
$\mumu_2^4\rtimes\mathrm{D}_{12}$, $\mumu_2^4\rtimes\mathfrak{S}_{4}$, $\mumu_2^4\rtimes\mumu_5$.
\end{center}
Note that the~group $\mathrm{Aut}(\mathscr{S})$ is always larger that $\mathrm{Aut}(\mathbb{P}^3,\mathscr{S})$ \cite{Keum1997,Kondo}.

\begin{remark}[{\cite{Blichfeldt1917,Nieto,Gonzalez-Dorrego}}]
\label{remark:S6}
Let $\mathfrak{N}$ be the~normalizer of the~subgroup $\mathbb{H}$ in the~group $\mathrm{PGL}_4(\mathbb{C})$.
Then $\mathrm{Aut}(\mathbb{P}^3,\mathscr{C})\subset\mathfrak{N}$,
and there exists an exact sequence $1\longrightarrow \mathbb{H}\longrightarrow \mathfrak{N}\longrightarrow \mathfrak{S}_{6}\longrightarrow 1$,
which can be described as follows. Let
$$
B_1=\begin{pmatrix}
    i & 0 & 0& 0\\
    0 & i& 0& 0\\
    0 & 0 & 1& 0\\
    0 & 0 & 0& 1
 \end{pmatrix}\ \text{and}\ B_2=\begin{pmatrix}
    -i& 0& 0& i\\
    0 & 1& 1& 0\\
    1 & 0& 0& 1\\
    0 & -i& i& 0
 \end{pmatrix}.
$$
Then $\langle B_1,B_2\rangle\in\mathfrak{N}$. Since $B_1^2\in\mathbb{H}$, $B_2^5=(B_1B_2)^6=[B_1,B_2]^3=\mathrm{Id}_{\mathbb{P}^3}$, $[B_1,B_2B_1B_2]^2\in\mathbb{H}$,
the images of $B_1$ and $B_2$ in the~quotient $\mathfrak{N}/\mathbb{H}$ generate the~whole group $\mathfrak{N}/\mathbb{H}\cong\mathfrak{S}_6$. Set
\begin{align*}
S_1&=\big\{x_0^4+x_1^4+x_2^4+x_3^4-6\big(x_0^2x_1^2+x_2^2x_3^2\big)-6\big(x_0^2x_2^2+x_1^2x_3^2\big)-6\big(x_0^2x_3^2+x_1^2x_2^2\big)=0\big\},\\
S_2&=\big\{x_0^4+x_1^4+x_2^4+x_3^4-6\big(x_0^2x_1^2+x_2^2x_3^2\big)+6\big(x_0^2x_2^2+x_1^2x_3^2\big)+6\big(x_0^2x_3^2+x_1^2x_2^2\big)=0\big\},\\
S_3&=\big\{x_0^4+x_1^4+x_2^4+x_3^4+6\big(x_0^2x_1^2+x_2^2x_3^2\big)-6\big(x_0^2x_2^2+x_1^2x_3^2\big)+6\big(x_0^2x_3^2+x_1^2x_2^2\big)=0\big\},\\
S_4&=\big\{x_0^4+x_1^4+x_2^4+x_3^4+6\big(x_0^2x_1^2+x_2^2x_3^2\big)+6\big(x_0^2x_2^2+x_1^2x_3^2\big)-6\big(x_0^2x_3^2+x_1^2x_2^2\big)=0\big\},\\
S_5&=\big\{x_0^4+x_1^4+x_2^4+x_3^4-12x_0x_1x_2x_3=0\big\},\\
S_6&=\big\{x_0^4+x_1^4+x_2^4+x_3^4+12x_0x_1x_2x_3=0\big\}.
\end{align*}
Then $S_1$, $S_2$, $S_3$, $S_4$, $S_5$, $S_6$ are $\mathbb{H}$-invariant surfaces,
and the~quotient $\mathfrak{N}/\mathbb{H}$ permutes~them.
For instance, the~transformation $B_1$ acts on the~set $\{S_1,S_2,S_3,S_4,S_5,S_6\}$~as~\mbox{$(1\,2)(3\,4)(5\,6)$},
and $B_2$ acts as the~permutation~\mbox{$(1\,2\,6\,3\,5)$}. This gives an explicit isomorphism $\mathfrak{N}/\mathbb{H}\cong\mathfrak{S}_6$.
\end{remark}

\begin{remark}
\label{remark:Dolgachev}
The quotient $\mathrm{Aut}(\mathbb{P}^3,\mathscr{S})/\mathbb{H}$ naturally acts on the~threefold \eqref{equation:Segre}
fixing the~point $[a:b:c:d:e]$ that corresponds to $\mathscr{S}$.
Projecting the~threefold from~this~point, we obtain a (rational) double cover of $\mathbb{P}^3$ that is branched along the~surface $\mathscr{S}$.
\end{remark}

Let $\pi\colon X\to\mathbb{P}^3$ be the~double cover branched along the~surface $\mathscr{S}$. Set~$H=\pi^*(\mathcal{O}_{\mathbb{P}^3}(1))$.
Then $\mathrm{Pic}(X)=\mathbb{Z}[H]$, $H^3=2$ and $-K_{X}\sim 2H$,
so $X$ is a del Pezzo threefold of degree $2$, which has $16$ ordinary double points.
We say that $X$ is \emph{a~Kummer~quartic~double~solid}~\cite{Varley}.

The~threefold $X$ is a hypersurface in $\mathbb{P}(1,1,1,1,2)$ given by
\begin{multline}
\label{equation:double-quartic}
w^2=a(x_0^4+x_1^4+x_2^4+x_3^4)+2b(x_0^2x_1^2+x_2^2x_3^2)+\\
+2c(x_0^2x_2^2+x_1^2x_3^2)+2d(x_0^2x_3^2+x_1^2x_2^2)+4ex_0x_1x_2x_3,
\end{multline}
where we consider $x_0$, $x_1$, $x_2$, $x_3$ as homogeneous coordinates on $\mathbb{P}(1,1,1,1,2)$ of weight~$1$,
and $w$ is a homogeneous coordinate on $\mathbb{P}(1,1,1,1,2)$ of weight $2$.

It is well-known that the~threefold $X$ is rational \cite{Semple-Roth,Varley,Prokhorov2013,CheltsovPrzyjalkowskiShramov}, see also Remark~\ref{remark:Dolgachev}.
Let us describe one birational map $\mathbb{P}^3\dasharrow X$ following \cite{Semple-Roth,Prokhorov2013}.
To do this, fix a~twisted cubic curve $C_3\subset\mathbb{P}^3$.
Then $C_3$ contains six distinct points $P_1,\ldots,P_6$ such that there exists a~commutative diagram
\begin{equation}
\label{equation:Prokhorov}
\xymatrix{
\widehat{X}\ar@{->}[d]_{\eta}\ar@{->}[rr]^{\varphi}&&X\ar@{->}[d]^{\pi}\\%
\mathbb{P}^3\ar@{-->}[rr]^{\chi}&&\mathbb{P}^3}
\end{equation}
where $\eta$ is a blow up of the points $P_1,\ldots,P_6$,
the morphism $\varphi$ is a contraction of the~proper transform of the~curve $C_3$ and proper transforms of $15$ lines in $\mathbb{P}^3$ that
pass through two points among $P_1,\ldots,P_6$, and $\chi$ is a rational map given by the~linear system of quadric surfaces
that pass through the points $P_1,\ldots,P_6$.

\begin{corollary}[{\cite{Clemens,Endrass}}]
\label{corollary:class-group}
One has $\mathrm{Cl}(X)\cong\mathbb{Z}^7$.
\end{corollary}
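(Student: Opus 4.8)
The plan is to read off $\mathrm{Cl}(X)$ directly from the explicit smooth small resolution already furnished by diagram~\eqref{equation:Prokhorov}. Recall that $\eta\colon\widehat{X}\to\mathbb{P}^3$ is the blow up of the six points $P_1,\dots,P_6$, so $\widehat{X}$ is a smooth rational threefold, and its Picard group is freely generated by the pullback of the hyperplane class together with the six exceptional divisors:
\[
\mathrm{Pic}\big(\widehat{X}\big)=\mathbb{Z}\big[\eta^{*}\mathcal{O}_{\mathbb{P}^3}(1)\big]\oplus\bigoplus_{i=1}^{6}\mathbb{Z}[E_i]\cong\mathbb{Z}^{7}.
\]
In particular $\mathrm{Pic}(\widehat{X})$ is free of rank $7$, with no torsion.

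First I would verify that $\varphi\colon\widehat{X}\to X$ is a \emph{small} birational morphism, i.e.\ that its exceptional locus has codimension at least $2$. By the construction underlying \eqref{equation:Prokhorov}, the map $\varphi$ contracts only the proper transform of the twisted cubic $C_3$ together with the proper transforms of the $15$ lines joining pairs among $P_1,\dots,P_6$; since $\binom{6}{2}=15$, these are $1+15=16$ curves, and they are contracted bijectively to the $16$ ordinary double points of $X$ (the proper transform of $C_3$ to one node, each line to one of the remaining fifteen). No divisor is contracted, so $\varphi$ is an isomorphism in codimension $1$, and $\widehat{X}$ is a projective smooth small resolution of $X$.

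Next I would invoke the general principle that a proper birational morphism which is an isomorphism in codimension $1$ induces an isomorphism of Weil divisor class groups. Concretely, proper transform and pushforward of prime divisors are mutually inverse over the open sets on which $\varphi$ is an isomorphism, and since the complementary loci have codimension at least $2$ on both sides, they contribute nothing to the divisor classes. Hence $\varphi_{*}\colon\mathrm{Cl}(\widehat{X})\to\mathrm{Cl}(X)$ is an isomorphism. As $\widehat{X}$ is smooth we have $\mathrm{Cl}(\widehat{X})=\mathrm{Pic}(\widehat{X})$, whence
\[
\mathrm{Cl}(X)\cong\mathrm{Cl}\big(\widehat{X}\big)=\mathrm{Pic}\big(\widehat{X}\big)\cong\mathbb{Z}^{7}.
\]

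The only point genuinely needing care is the claim that $\varphi$ contracts no divisor and that precisely $16$ pairwise disjoint curves collapse to the $16$ nodes; this is where the geometry of the six points lying on the twisted cubic enters, and it is the step I expect to absorb most of the work, although it is essentially already encoded in the construction of \eqref{equation:Prokhorov}. Consistently with the cited defect computations of Clemens and Endra{\ss}, one may rephrase the conclusion as the assertion that the defect of the $16$-nodal threefold $X$ equals $6$, so that $\mathrm{rk}\,\mathrm{Cl}(X)=\mathrm{rk}\,\mathrm{Pic}(X)+6=1+6=7$; but the explicit small resolution makes both the rank and the absence of torsion transparent without a separate defect calculation.
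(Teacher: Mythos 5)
Your proposal is correct and follows exactly the route the paper intends: Corollary~\ref{corollary:class-group} is stated as an immediate consequence of diagram~\eqref{equation:Prokhorov}, where $\varphi$ is a small resolution contracting the $16$ curves to the $16$ nodes, so that $\mathrm{Cl}(X)\cong\mathrm{Pic}(\widehat{X})\cong\mathbb{Z}^{7}$. The references to Clemens and Endra{\ss} merely point to the alternative defect computation that you also mention at the end, so nothing in your argument diverges from the paper.
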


\begin{remark}
\label{remark:Weddle}
The vertices of the~quadric cones in $\mathbb{P}^3$ that pass through $P_1,\ldots,P_6$
span an~irreducible singular quartic surface $\mathfrak{S}$ which is known as \emph{the~Weddle surface} \cite{Hudson,Varley}.
This surface is singular at the points $P_1,\ldots,P_6$, and $\chi$ induces a birational map $\mathfrak{S}\dasharrow\mathscr{S}$.
On the~other hand, the~double cover of $\mathbb{P}^3$  branched along $\mathfrak{S}$ is irrational \cite{Varley,CheltsovPrzyjalkowskiShramov}.
\end{remark}

Let $\sigma\in\mathrm{Aut}(X)$ be the~Galois involution of the~double cover $\pi$.
Then $\sigma$ is contained in the~center of the~group $\mathrm{Aut}(X)$.
Moreover, since $\pi$ is $\mathrm{Aut}(X)$-equivariant, it induces
a~homomorphism $\upsilon\colon\mathrm{Aut}(X)\to\mathrm{Aut}(\mathbb{P}^3,\mathscr{S})$ with $\mathrm{ker}(\upsilon)=\langle\sigma\rangle$,
so we have exact sequence
$$
\xymatrix{
1\ar@{->}[rr]&&\langle\sigma\rangle\ar@{->}[rr]&&\mathrm{Aut}(X)\ar@{->}[rr]^{\upsilon}&&\mathrm{Aut}\big(\mathbb{P}^3,\mathscr{S}\big)\ar@{->}[rr]&&1.}
$$

The main result of this paper is the~following theorem (cf. \cite{Avilov2019,Avilov,CheltsovPrzyjalkowskiShramov2016}).

\begin{theorem}
\label{theorem:main}
Let $G$ be any subgroup in $\mathrm{Aut}(X)$ such that $\mathrm{Cl}^G(X)\cong\mathbb{Z}$ and $\mathbb{H}\subseteq\upsilon(G)$.
Then the~Fano threefold $X$ is $G$-birationally super-rigid.
\end{theorem}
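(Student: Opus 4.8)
The plan is to run the $G$-equivariant Noether--Fano--Iskovskikh method. Since $\mathrm{Cl}^G(X)\cong\mathbb{Z}$ is generated by $H$, the threefold $X$ is $G\mathbb{Q}$-factorial of $G$-invariant class rank $1$, and as its singularities are terminal (ordinary double points) it is a $G$-Mori fibre space. Hence $X$ is $G$-birationally super-rigid as soon as the following holds: for every $G$-invariant mobile linear system $\mathcal{M}\subseteq|nH|$ the pair $\big(X,\tfrac{2}{n}\mathcal{M}\big)$, whose boundary is $\mathbb{Q}$-linearly trivial to $-K_X$, is canonical. So I would assume the contrary, fix such an $\mathcal{M}$ for which $\big(X,\tfrac{2}{n}\mathcal{M}\big)$ is not canonical, and derive a contradiction. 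Because $\mathcal{M}$ has no fixed components, every centre of a non-canonical singularity has codimension at least two; the union of these centres is a proper $G$-invariant closed set, and choosing a minimal one its $G$-orbit $\mathcal{Z}$ is a finite union of points or of irreducible curves. As $\sigma\in\ker(\upsilon)$ is central and $\mathbb{H}\subseteq\upsilon(G)$, the image $\pi(\mathcal{Z})\subset\mathbb{P}^3$ is $\mathbb{H}$-invariant, which is the lever for everything below.

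The combinatorial input is the orbit geometry of $\mathbb{H}\cong\mumu_2^4$ on $\mathbb{P}^3$. Its preimage in $\mathrm{GL}_4(\mathbb{C})$ is the irreducible Heisenberg (theta) representation, so $\mathbb{H}$ fixes no point and preserves no line of $\mathbb{P}^3$. The commutator pairing is the nondegenerate symplectic form on $\mathbb{H}\cong\mathbb{F}_2^4$, whose maximal isotropic subspaces have dimension $2$; a point-stabiliser would force an isotropic subspace of the order of its abelian lift, so no subgroup of order $8$ can fix a point, every stabiliser has order at most $4$, and hence every $\mathbb{H}$-orbit on $\mathbb{P}^3$ has length at least $4$ (length $4$ being realised on the coordinate tetrahedra). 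I would also classify the $\mathbb{H}$-invariant curves, and $\mathbb{H}$-orbits of curves, of small degree; the relevant ones are assembled from the configuration of the $16$ tropes of Example~\ref{example:recover-curve-from-quartic} and the $16$ nodes, and these are the only candidates that can survive the degree bounds below.

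\textbf{Curves and smooth points.} If $\mathcal{Z}$ is a union of curves, then along a component $C$ the blow-up has discrepancy $1$, so non-canonicity forces $\mathrm{mult}_C\mathcal{M}>\tfrac{n}{2}$; intersecting two general $M_1,M_2\in\mathcal{M}$ and using $H^3=2$ gives $\sum_{C'\subset\mathcal{Z}}(\mathrm{mult}_{C'}\mathcal{M})^2(H\cdot C')\le(M_1\cdot M_2)\cdot H=2n^2$, so the total $H$-degree of $\mathcal{Z}$ is less than $8$. Matching this against the classification (and recalling that $\mathbb{H}$ has no invariant line) leaves only finitely many configurations, each of which I would exclude by a sharper local estimate of $\mathrm{mult}_{C}\mathcal{M}$, obtained by restricting $\mathcal{M}$ to members of $|H|$ and using the incidence of $C$ with the node orbit and the branch surface $\mathscr{S}$. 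If instead $\mathcal{Z}$ is a $G$-orbit of smooth points, Corti's $4n^2$-inequality gives $\mathrm{mult}_P(M_1\cdot M_2)>n^2$ at each of the (at least four) points $P\in\mathcal{Z}$; bounding the total multiplicity of the one-cycle $M_1\cdot M_2$ along the orbit by its anticanonical degree $(M_1\cdot M_2)\cdot(-K_X)=4n^2$, which I would make precise by restricting to a surface in $|{-}K_X|$, then contradicts the orbit length.

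\textbf{Nodes, the main obstacle.} The genuinely delicate case is when $\mathcal{Z}$ lies in $\mathrm{Sing}(X)$, i.e.\ in the $16$ nodes, which by $\mathbb{H}\subseteq\upsilon(G)$ form a single $G$-orbit. Here the smooth-point inequality must be replaced by its node analogue: one blows up an ordinary double point, whose exceptional divisor is a quadric of discrepancy $1$, and tracks the proper transform of $\mathcal{M}$ together with the two rulings contracted by a small resolution. I expect the crux of the whole argument to be this local computation, controlling $\mathrm{mult}_P\mathcal{M}$ at a node against the global invariant $(-K_X)^3=16$ while respecting that all $16$ nodes are forced to carry the same multiplicity; combined with the curve exclusions above, this should close off every possible non-canonical centre and prove that $\big(X,\tfrac{2}{n}\mathcal{M}\big)$ is canonical, yielding the $G$-birational super-rigidity of $X$.
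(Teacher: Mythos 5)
Your skeleton (equivariant Noether--Fano, the degree bound $H\cdot\mathscr{Z}<8$ for a curve centre, Corti's inequality at a smooth point) agrees with the paper, and your curve case is essentially the paper's: the $\mathbb{H}$-irreducible curves of degree $<8$ are classified in Lemma~\ref{lemma:curves} --- they arise from the $30$ lines fixed pointwise by involutions of $\mathbb{H}$ and from rulings of the ten invariant quadrics, not from the tropes as you guessed --- and each case is excluded by an elementary intersection computation. The genuine gap is in the zero-dimensional case. After the paper's exclusions, a smooth-point centre has an $\mathbb{H}^\rho$-orbit of length $16$, and your plan to cut $M_1\cdot M_2$ with a member of $|-K_X|=|2H|$ through the whole orbit cannot be executed: $h^0(X,\mathcal{O}_X(2H))=11$, so in general no surface in $|2H|$ passes through $16$ points, and restricting to a proper subset of the orbit destroys the symmetry you would need to keep the base curves of the resulting (non-invariant) linear system away from the components of $M_1\cdot M_2$. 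The paper closes this case with a tool that is entirely absent from your proposal: the multiplier ideal sheaf of $(X,\mu\mathcal{M})$ and Nadel vanishing, which bound the number of isolated non-klt centres by $h^0(X,\mathcal{O}_X(H))=4<16$; and before that it must rule out points lying on the ten invariant quadrics by inversion of adjunction plus Nadel vanishing on their K3 preimages, where the bound is $h^0=10<16$. Without a substitute for this vanishing argument the point case does not close.

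You also misplace the difficulty at the nodes. In the paper this is the quickest step: since $\mathrm{Sing}(\mathscr{S})$ is cut out by cubics, the divisor $h^{*}(3H)-E$ on the blow-up of all $16$ nodes is nef, and
$$
0\leqslant \big(h^{*}(3H)-E\big)\cdot \overline{M}_1\cdot\overline{M}_2=6n^2-32\epsilon^2
$$
contradicts $\epsilon>\tfrac{n}{2}$ at once (see the proof of Proposition~\ref{proposition:80-49}); no delicate local analysis at a single node is needed, and the symmetry you invoke (all $16$ nodes carrying the same multiplicity) is exactly what makes this global computation work. Conversely, the length-$4$ orbits of smooth points, which you treat as routine, are handled in Proposition~\ref{proposition:48-50} by a surface in $|3H|$ singular along the whole orbit, giving $6n^2>8n^2$; your $|2H|$ count through four points would suffice numerically, but again only after the base-locus control that your sketch does not supply.
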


\begin{corollary}
\label{corollary:main}
Let $G$ be any subgroup in $\mathrm{Aut}(X)$ such that $G$ contains $\sigma$ and $\mathbb{H}\subseteq\upsilon(G)$.
Then $X$ is $G$-birationally super-rigid.
\end{corollary}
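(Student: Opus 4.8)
The plan is to deduce the corollary directly from Theorem~\ref{theorem:main}, so the work amounts to verifying that the two hypotheses of the theorem—namely $\mathrm{Cl}^G(X)\cong\mathbb{Z}$ and $\mathbb{H}\subseteq\upsilon(G)$—follow from the hypotheses given here, namely $\sigma\in G$ and $\mathbb{H}\subseteq\upsilon(G)$. The second condition is literally assumed, so the only thing to check is that $\sigma\in G$ together with $\mathbb{H}\subseteq\upsilon(G)$ forces the invariant class group $\mathrm{Cl}^G(X)$ to have rank $1$.

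First I would record, using Corollary~\ref{corollary:class-group}, that $\mathrm{Cl}(X)\cong\mathbb{Z}^7$. The free part of rank $7$ comes from the ample generator $H$ together with the $16$ small resolutions associated to the $16$ nodes, subject to relations; concretely, one Weil divisor class is attached to each exceptional curve of a small resolution, and these $16$ classes together with $[H]$ span a lattice of rank $7$. The group $\mathrm{Aut}(X)$ acts on $\mathrm{Cl}(X)$ through $\upsilon$, since the nodes of $X$ lie over the singular points $\mathrm{Sing}(\mathscr{S})$ and the Galois involution $\sigma$ acts trivially on $\mathrm{Cl}(X)$ (it fixes $H$ and interchanges the two rulings of each node in a way that, combined with the relation $-K_X\sim 2H$, acts trivially on classes). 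Thus the $G$-action on $\mathrm{Cl}(X)$ factors through $\upsilon(G)$, and since $\mathbb{H}\subseteq\upsilon(G)$, it suffices to show $\mathrm{Cl}(X)^{\mathbb{H}}\cong\mathbb{Z}$.

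The key computation is therefore the $\mathbb{H}$-action on the rank-$6$ part of $\mathrm{Cl}(X)$ coming from the nodes. By the discussion preceding Lemma~\ref{lemma:Dolgachev}, the set $\mathrm{Sing}(\mathscr{S})$ is a single $\mathbb{H}$-orbit of $16$ points; hence $\mathbb{H}$ permutes the $16$ node classes transitively. On the sublattice spanned by these node classes modulo the relations cutting the rank down to $6$, transitivity of the permutation action forces the invariant subspace to be at most one-dimensional, and in fact the one invariant direction coincides (again using $-K_X\sim 2H$ and the node relations) with the span of $[H]$. I would make this precise by exhibiting the relation lattice explicitly and checking that $\bigl(\mathrm{Cl}(X)\otimes\mathbb{Q}\bigr)^{\mathbb{H}}$ is spanned by $[H]$; the point is that an $\mathbb{H}$-invariant combination of the $16$ transitively-permuted node classes must be proportional to their sum, which is expressible in terms of $[H]$. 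This yields $\mathrm{Cl}^{G}(X)\cong\mathrm{Cl}(X)^{\upsilon(G)}\subseteq\mathrm{Cl}(X)^{\mathbb{H}}\cong\mathbb{Z}$, and since $[H]$ is $G$-invariant the inclusion is an equality.

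With both hypotheses of Theorem~\ref{theorem:main} verified, the corollary is immediate: $X$ is $G$-birationally super-rigid. The main obstacle I expect is the bookkeeping of the relation lattice among the $16$ node classes—i.e. pinning down exactly which integral relations hold so that the quotient has the correct rank $6$ and so that the $\mathbb{H}$-invariants collapse to $\mathbb{Z}[H]$ rather than leaving extra invariant classes. This is where the transitivity of the $\mathbb{H}$-orbit on $\mathrm{Sing}(\mathscr{S})$ does the essential work, and I would lean on the explicit node coordinates (as in Example~\ref{example:recover-curve-from-quartic}) together with the matrices $A_1,A_2,A_3,A_4$ to confirm the permutation representation is transitive with the right invariants.
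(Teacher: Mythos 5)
Your overall strategy --- reduce to Theorem~\ref{theorem:main} by checking $\mathrm{Cl}^G(X)\cong\mathbb{Z}$ --- is the right one, but the mechanism you propose for this check fails at two points, and the second cannot be repaired. First, the claim that the Galois involution $\sigma$ acts trivially on $\mathrm{Cl}(X)$ is false: the paper records that $\sigma$ swaps the non-Cartier surfaces $\Pi_i^+\leftrightarrow\Pi_i^-$ (equivalently $E_i\leftrightarrow\widehat{Q}_i$ on $\widehat{X}$), and Remark~\ref{remark:Weyl-group} states that $\mathrm{Aut}(X)\to\mathrm{Aut}(\Lambda)$ is injective, so no nontrivial automorphism acts trivially on $\mathrm{Cl}(X)$. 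If $\sigma$ did act trivially, then $\mathrm{Cl}^{\langle\sigma\rangle}(X)$ would be all of $\mathbb{Z}^7$, contradicting the observation in Example~\ref{example:Z2-non-rigid} that $\mathrm{Cl}^{\langle\sigma\rangle}(X)\cong\mathbb{Z}$. Second, and more seriously, your fallback --- that transitivity of $\mathbb{H}$ on the $16$ nodes forces $\mathrm{Cl}^{\mathbb{H}}(X)\cong\mathbb{Z}$ --- is refuted by the paper itself: Corollary~\ref{corollary:class-group-planes-H-invariant-part} only gives $\mathbb{Z}$ or $\mathbb{Z}^2$, and Examples~\ref{example:Heisenberg-class-group-Z-Z} and \ref{example:80-49} exhibit subgroups $G$ with $\mathbb{H}\subseteq\upsilon(G)$, acting transitively on $\mathrm{Sing}(X)$, for which $\mathrm{Cl}^{G}(X)\cong\mathbb{Z}^2$. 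Transitivity on the nodes only controls how the $32$ planes $\Pi_i^{\pm}$ are permuted up to the two $\mathbb{H}^\rho$-orbits of $16$; whether the sum over one such orbit is a multiple of $H$ depends on the lift $\rho$, which is precisely why $\mathrm{Cl}^G(X)\cong\mathbb{Z}$ is a genuine hypothesis of Theorem~\ref{theorem:main} and not a consequence of $\mathbb{H}\subseteq\upsilon(G)$.

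The element that actually does the work here is the one you discarded: $\sigma$. Since $\sigma$ interchanges $\Pi^+$ and $\Pi^-$, any subgroup $G$ containing $\sigma$ satisfies condition (i) of Lemma~\ref{lemma:class-group-planes}, whence $\mathrm{Cl}^G(X)\cong\mathbb{Z}$; more intrinsically, $D+\sigma(D)\sim\pi^*\pi_*(D)$ for every Weil divisor $D$ on $X$, so a $\sigma$-invariant class is a rational multiple of $H$, and $\mathrm{Cl}(X)\cong\mathbb{Z}^7$ is torsion-free. This is exactly how the paper uses the hypothesis $\sigma\in G$ (compare the first line of the proof of Proposition~\ref{proposition:80-49}). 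With that correction the deduction from Theorem~\ref{theorem:main} is immediate.
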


The~condition  $\mathrm{Cl}^G(X)\cong\mathbb{Z}$ in Theorem~\ref{theorem:main} simply means that $X$ is a~$G$-Mori fibre space,
which is required by the~definition of $G$-birational super-rigidity (see \cite[Definition~3.1.1]{CheltsovShramov}).
The condition $\mathbb{H}\subseteq\upsilon(G)$ does not imply that $\mathrm{Cl}^G(X)\cong\mathbb{Z}$,
see Examples~\ref{example:Heisenberg-class-group-Z-Z} and \ref{example:80-49}  below.
The following example shows that we cannot remove the~condition $\mathbb{H}\subseteq\upsilon(G)$.

\begin{example}
\label{example:Z2-non-rigid}
Observe that $\mathrm{Cl}^{\langle\sigma\rangle}(X)\cong\mathbb{Z}$.
Let $S_1$ and $S_2$ be two general surfaces in $|H|$, and let $C=S_1\cap S_2$.
Then~$C$ is a smooth irreducible $\langle\sigma\rangle$-invariant curve, $\pi(C)$ is a line,
and there exists $\langle\sigma\rangle$-commutative diagram
$$
\xymatrix{
&V\ar@{->}[ld]_{\alpha}\ar@{->}[rd]^{\beta}&\\%
X\ar@{-->}[rr]&&\mathbb{P}^1}
$$
where $\alpha$ is the~blow up of the~curve $C$,
the dashed arrow $\dasharrow$ is given by the~pencil generated by the~surfaces $S_1$ and $S_2$,
and $\beta$ is a fibration into del Pezzo surfaces of~degree~$2$.
Therefore, the~threefold $X$ is not $\langle\sigma\rangle$-birationally rigid.
\end{example}

Let $G$ be a  subgroup in $\mathrm{Aut}(X)$ such that $\upsilon(G)$ contains $\mathbb{H}$.
Before proving Theorem~\ref{theorem:main}, let us explain how to check the~condition $\mathrm{Cl}^G(X)\cong\mathbb{Z}$.
For a homomorphism $\rho\colon\mathbb{H}\to\mumu_2$, consider the~action of the~group $\mathbb{H}$ on the~threefold $X$ given by
\begin{align*}
A_1\colon [x_0:x_1:x_2:x_3:w]&\mapsto[-x_0:x_1:-x_2:x_3:\rho(A_1)w],\\
A_2\colon [x_0:x_1:x_2:x_3:w]&\mapsto[-x_0:x_1:-x_2:x_3:\rho(A_2)w],\\
A_3\colon [x_0:x_1:x_2:x_3:w]&\mapsto[x_1:x_2:x_3:x_2:\rho(A_3)w],\\
A_4\colon [x_0:x_1:x_2:x_3:w]&\mapsto[x_3:x_2:x_1:x_0:\rho(A_4)w].
\end{align*}
This gives a lift of the~subgroup $\mathbb{H}$ to $\mathrm{Aut}(X)$.
Let $\mathbb{H}^\rho$ be the~resulting subgroup~in~$\mathrm{Aut}(X)$.
Since $\mathbb{H}\subset\upsilon(G)$, we may assume that $\mathbb{H}^\rho\subset G$.
If $\rho$ is trivial, we let $\mathbb{H}=\mathbb{H}^\rho$ for simplicity.

For every plane $\Pi_i$, one has $\pi^*(\Pi_i)=\Pi_i^++\Pi_i^-$,
where $\Pi_i^+$ and $\Pi_i^-$ are two irreducible surfaces such that $\Pi_i^+\ne\Pi_i^-$ and $\sigma(\Pi_i^+)=\Pi_i^-$.
Note that we do not have a canonical way to distinguish between the~surfaces $\Pi_i^+$ and~$\Pi_i^-$. Namely, if  $\pi^*(\Pi_i)$ is given by
$$
\left\{\aligned
&h_i(x_0,x_1,x_2,x_3)=0,\\
&w^2=g_i^2(x_0,x_1,x_2,x_3),
\endaligned
\right.
$$
where $h_i$ is a linear polynomial such that $\Pi_i=\{h_i=0\}\subset\mathbb{P}^3$,
and $g_i$ is a quadratic polynomial such that the~conic $\mathcal{C}_i$ is given by $h_i=g_i=0$, then
$$
\Pi_i^\pm=\big\{w\pm g_i(x_0,x_1,x_2,x_3)=h_i(x_0,x_1,x_2,x_3)=0\big\}\subset\mathbb{P}(1,1,1,1,2).
$$
But the~choice of $\pm$ here is not uniquely defined, because we can always swap $g_i$ with $-g_i$.

On the~other hand, since $\mathbb{H}$ acts transitively on the~set \eqref{equation:16-planes}, the~set
$$
\Big\{\Pi_1^+,\Pi_1^-,\Pi_2^+,\Pi_2^-,\Pi_3^+,\Pi_3^-\ldots,\Pi_{14}^+,\Pi_{14}^-,\Pi_{15}^+,\Pi_{15}^-,\Pi_{16}^+,\Pi_{16}^-\Big\}
$$
splits into two $\mathbb{H}^\rho$-orbits consisting of $16$ surfaces such that each of them contains
exactly one surface among $\Pi_i^+$ and $\Pi_i^-$ for every $i$.
Hence, we may assume that these $\mathbb{H}^\rho$-orbits~are
$$
\Big\{\Pi_1^+,\Pi_2^+,\Pi_3^+,\Pi_4^+,\Pi_5^+,\Pi_6^+,\Pi_7^+,\Pi_8^+,\Pi_9^+,\Pi_{10}^+,\Pi_{11}^+,\Pi_{12}^+,\Pi_{13}^+,\Pi_{14}^+,\Pi_{15}^+,\Pi_{16}^+\Big\}
$$
and
$$
\Big\{\Pi_1^-,\Pi_2^-,\Pi_3^-,\Pi_4^-,\Pi_5^-,\Pi_6^-,\Pi_7^-,\Pi_8^-,\Pi_9^-,\Pi_{10}^-,\Pi_{11}^-,\Pi_{12}^-,\Pi_{13}^-,\Pi_{14}^-,\Pi_{15}^-,\Pi_{16}^-\Big\}.
$$
\begin{corollary}
\label{corollary:class-group-planes}
The surfaces $\Pi_1^+,\Pi_1^-,\ldots,\Pi_{16}^+,\Pi_{16}^-$ generate the~group $\mathrm{Cl}(X)$.
\end{corollary}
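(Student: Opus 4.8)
The plan is to realise $\mathrm{Cl}(X)$ through the local structure at the $16$ nodes and to reduce the statement to an explicit lattice computation. Write $N_1,\dots,N_{16}$ for the nodes of $X$, lying over $\mathrm{Sing}(\mathscr{S})$. Each $N_j$ is an ordinary double point, so its local class group $\mathrm{Cl}(\mathcal{O}_{X,N_j})$ is infinite cyclic, and restriction of Weil divisor classes gives a homomorphism $\lambda\colon\mathrm{Cl}(X)\to\bigoplus_{j=1}^{16}\mathrm{Cl}(\mathcal{O}_{X,N_j})\cong\mathbb{Z}^{16}$. Its kernel consists of the classes that are Cartier at every node, hence Cartier, so $\ker\lambda=\mathrm{Pic}(X)=\mathbb{Z}[H]$; combined with Corollary~\ref{corollary:class-group} this forces $\mathrm{im}\,\lambda$ to be a lattice of rank $6$. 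Since $\Pi_i^++\Pi_i^-\sim H$ and $H$ is Cartier, one has $\lambda([\Pi_i^-])=-\lambda([\Pi_i^+])$ and $[\Pi_i^-]=[H]-[\Pi_i^+]$, so the group generated by the $32$ surfaces equals $\langle [H],[\Pi_1^+],\dots,[\Pi_{16}^+]\rangle$. As $\ker\lambda=\mathbb{Z}[H]$ already lies in this group, proving the corollary is equivalent to showing that the vectors $v_i:=\lambda([\Pi_i^+])$ generate the whole lattice $\mathrm{im}\,\lambda$.

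First I would compute the $v_i$. From the description $\Pi_i^\pm=\{w\pm g_i=h_i=0\}$, the surface $\Pi_i^+$ maps isomorphically onto the trope $\Pi_i\cong\mathbb{P}^2$ and meets the singular locus of $X$ exactly in the nodes lying over $\mathcal{C}_i\cap\mathrm{Sing}(\mathscr{S})$, i.e.\ in the six nodes carried by $\Pi_i$; it avoids the remaining ten nodes. At each of these six nodes $\Pi_i^+$ and $\Pi_i^-$ are the two non-Cartier Weil branches through the ordinary double point, whose local classes are $\pm1$ and sum to the (Cartier) class of $H$. Hence $v_i$ is supported on precisely the six nodes incident to $\Pi_i$, with entries in $\{+1,-1\}$; in other words $v_i$ is a signed incidence vector of the $i$-th line of the $16_6$ Kummer configuration of nodes and tropes.

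The crux is then to show that these signed incidence vectors generate $\mathrm{im}\,\lambda$. I would make $\lambda$ completely explicit using the small resolution $\varphi\colon\widehat{X}\to X$ of diagram~\eqref{equation:Prokhorov}: since $\varphi$ contracts the sixteen curves (the proper transform of $C_3$ and of the fifteen lines through pairs of the $P_k$) to the sixteen nodes, the local class of a Weil divisor at $N_j$ is read off as its intersection number with the corresponding contracted curve $\ell_j$, so that $v_i=(\widehat{\Pi_i^+}\cdot\ell_1,\dots,\widehat{\Pi_i^+}\cdot\ell_{16})$ becomes an integer vector computable from the coordinates of Example~\ref{example:recover-curve-from-quartic}. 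Using that $\mathbb{H}$ acts transitively on the sixteen tropes and on the sixteen nodes, it suffices to pin down one $v_i$ and spread it around by the $\mathbb{H}$-action (with the induced signs). It then remains to verify, by a finite linear-algebra computation over $\mathbb{Z}$, that the $16\times16$ matrix $V=(v_1,\dots,v_{16})$ has rank $6$ and that the sublattice $\langle v_1,\dots,v_{16}\rangle$ is primitive in $\mathbb{Z}^{16}$. Granting this, $\langle v_i\rangle=(\langle v_i\rangle\otimes\mathbb{Q})\cap\mathbb{Z}^{16}\supseteq\mathrm{im}\,\lambda$ (the two lattices have the same rank-$6$ rational span and $\mathrm{im}\,\lambda\subseteq\mathbb{Z}^{16}$), while the reverse inclusion is clear, giving $\langle v_i\rangle=\mathrm{im}\,\lambda$ and hence the corollary.

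I expect the integrality statement in the last step to be the main obstacle: it is easy to see that the $v_i$ span a rank-$6$ subspace, but showing they generate $\mathrm{im}\,\lambda$ over $\mathbb{Z}$ rather than a proper finite-index sublattice requires controlling the signs of the entries and checking that the greatest common divisor of the $6\times6$ minors of $V$ equals $1$. Here I would exploit the extra symmetry of the configuration: the normalizer $\mathfrak{N}$ with $\mathfrak{N}/\mathbb{H}\cong\mathfrak{S}_6$ from Remark~\ref{remark:S6} acts on $\mathrm{im}\,\lambda$, and the resulting $\mathbb{H}\rtimes\mathfrak{S}_6$-module structure should identify $\mathrm{im}\,\lambda$ with a standard, self-dual representation-theoretic lattice in which the signed incidence vectors are manifestly primitive; failing a conceptual identification, the computation is small enough to finish directly (for instance via Smith normal form) using the explicit data above.
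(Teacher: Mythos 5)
Your overall strategy---localise $\mathrm{Cl}(X)$ at the sixteen nodes, identify $\ker\lambda$ with $\mathrm{Pic}(X)=\mathbb{Z}[H]$, and reduce the corollary to showing that the sixteen signed incidence vectors $v_i=\lambda([\Pi_i^+])$ of the $(16_6)$ configuration generate the rank-$6$ lattice $\mathrm{im}\,\lambda$---is coherent, and the reduction itself is correct (in particular the observation that $[H]=[\Pi_i^+]+[\Pi_i^-]$ already lies in the subgroup generated by the $32$ surfaces, so that surjecting onto $\mathrm{im}\,\lambda$ suffices). But the proof is not finished: everything hinges on the assertion that $\langle v_1,\dots,v_{16}\rangle$ is a \emph{primitive} rank-$6$ sublattice of $\mathbb{Z}^{16}$, and you neither determine the sign data needed to write down the matrix $V$ nor perform the Smith normal form check; you explicitly defer this step (``granting this'') and identify it as the main obstacle. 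Since the corollary is precisely this integrality statement in disguise---rank $6$ over $\mathbb{Q}$ is easy, whereas a proper finite-index sublattice would make the statement false---the argument as written has a genuine gap. For the record, the computation does succeed: in the model below, $\mathrm{im}\,\lambda\cong\mathrm{Cl}(X)/\mathbb{Z}[H]$ has basis $\bar h,\bar e_2,\dots,\bar e_6$ (with $H\sim 2h-e_1-\cdots-e_6$), and the images of the $32$ surfaces are the classes $\pm\bar e_i$ and $\pm(\bar h-\bar e_i-\bar e_j-\bar e_k)$, which visibly generate.

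The paper obtains the corollary with no lattice computation at all, from the diagram \eqref{equation:Prokhorov} that you yourself invoke only to make $\lambda$ explicit. Since $\varphi$ contracts only curves, $\mathrm{Cl}(X)\cong\mathrm{Pic}(\widehat{X})$, and $\widehat{X}$ is the blow-up of $\mathbb{P}^3$ at six points, so $\mathrm{Pic}(\widehat{X})=\mathbb{Z}\langle h,e_1,\dots,e_6\rangle\cong\mathbb{Z}^7$, consistently with Corollary~\ref{corollary:class-group}. The strict transforms of the $32$ surfaces $\Pi_i^{\pm}$ are exactly the six exceptional divisors $E_1,\dots,E_6$, the twenty planes $\widehat{\Pi}_{i,j,k}\sim h-e_i-e_j-e_k$ and the six quadric cones $\widehat{Q}_i$; already $E_1,\dots,E_6$ together with a single $\widehat{\Pi}_{1,2,3}$ generate $\mathrm{Pic}(\widehat{X})$, and the corollary follows. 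You would do better to run your local analysis \emph{through} this model from the start rather than using it only to extract the vectors $v_i$: the generation statement then becomes transparent and the delicate sign bookkeeping at the nodes disappears entirely.
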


\begin{corollary}
\label{corollary:class-group-planes-H-invariant-part}
Either $\mathrm{Cl}^{\mathbb{H}^\rho}(X)\cong\mathbb{Z}$ or $\mathrm{Cl}^{\mathbb{H}^\rho}(X)\cong\mathbb{Z}^2$.
\end{corollary}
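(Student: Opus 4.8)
The plan is to compute the rank of the invariant lattice $\mathrm{Cl}^{\mathbb{H}^\rho}(X)$ by averaging over $\mathbb{H}^\rho$, and then to use freeness of $\mathrm{Cl}(X)$ to promote the rank bound into the asserted isomorphism type. First I would record the two inputs that drive everything: by Corollary~\ref{corollary:class-group} the group $\mathrm{Cl}(X)\cong\mathbb{Z}^7$ is free abelian, and by Corollary~\ref{corollary:class-group-planes} the $32$ classes $[\Pi_1^\pm],\dots,[\Pi_{16}^\pm]$ generate it. Since $\mathrm{Cl}^{\mathbb{H}^\rho}(X)$ is a subgroup of the free group $\mathrm{Cl}(X)$, it is automatically free, so it suffices to show that its rank is $1$ or $2$.

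For the upper bound I would exploit that $\mathbb{H}^\rho\cong\mumu_2^4$ has order $16$ and, by the orbit description preceding the statement, permutes the $32$ surfaces in the two orbits $\{\Pi_1^+,\dots,\Pi_{16}^+\}$ and $\{\Pi_1^-,\dots,\Pi_{16}^-\}$, each of size $16$. As each orbit has size equal to $|\mathbb{H}^\rho|$, the point stabilizers are trivial and both actions are regular. Writing $\Sigma^\pm=\sum_{i=1}^{16}[\Pi_i^\pm]$ for the two orbit sums, I would take an arbitrary invariant class $D\in\mathrm{Cl}^{\mathbb{H}^\rho}(X)$, expand it as $D=\sum_i a_i[\Pi_i^+]+\sum_i b_i[\Pi_i^-]$ with $a_i,b_i\in\mathbb{Z}$, and apply the Reynolds operator $\frac{1}{16}\sum_{g\in\mathbb{H}^\rho}g$. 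Because $D$ is invariant this operator fixes $D$, while for each fixed $i$ regularity gives $\sum_{g}g[\Pi_i^+]=\Sigma^+$ and $\sum_g g[\Pi_i^-]=\Sigma^-$, since $g\mapsto g\cdot[\Pi_i^\pm]$ runs bijectively over the respective orbit. Hence, inside $\mathrm{Cl}(X)\otimes\mathbb{Q}$,
$$
D=\frac{1}{16}\Big(\big(\textstyle\sum_i a_i\big)\Sigma^++\big(\sum_i b_i\big)\Sigma^-\Big),
$$
so every invariant class lies in the $\mathbb{Q}$-span of $\Sigma^+$ and $\Sigma^-$, and therefore $\mathrm{Cl}^{\mathbb{H}^\rho}(X)$ has rank at most $2$.

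For the lower bound I would note that $\pi^*(\Pi_i)=\Pi_i^++\Pi_i^-$ is linearly equivalent to $H$, so $[H]=[\Pi_i^+]+[\Pi_i^-]$ is a nonzero $\mathbb{H}^\rho$-invariant class and the rank is at least $1$. Combining the two bounds, $\mathrm{Cl}^{\mathbb{H}^\rho}(X)$ is free of rank $1$ or $2$, that is, isomorphic to $\mathbb{Z}$ or $\mathbb{Z}^2$. I do not expect any serious obstacle here: the only point needing care is that the Reynolds computation, carried out rationally, correctly bounds the rank of the integral invariant lattice, and this is exactly where freeness of $\mathrm{Cl}(X)$ is used, since it guarantees $\mathrm{Cl}^{\mathbb{H}^\rho}(X)\otimes\mathbb{Q}=(\mathrm{Cl}(X)\otimes\mathbb{Q})^{\mathbb{H}^\rho}$ and precludes any torsion contribution. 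The genuinely interesting dichotomy, namely which of the two cases actually occurs, is not part of this statement; it would be decided by whether the orbit sums $\Sigma^+$ and $\Sigma^-$ are proportional, equivalently by the choice of the homomorphism $\rho$.
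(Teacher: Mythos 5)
Your proof is correct and follows essentially the same route the paper intends: the corollary is stated without proof precisely because it follows from Corollary~\ref{corollary:class-group-planes} together with the splitting of the $32$ surfaces into two $\mathbb{H}^\rho$-orbits of length $16$, via exactly the averaging argument you make explicit (orbit sums span the rational invariants, so the rank is at most $2$; the invariant ample class $H=\Pi_i^++\Pi_i^-$ gives rank at least $1$; freeness of $\mathrm{Cl}(X)\cong\mathbb{Z}^7$ upgrades this to the stated isomorphism types). Your closing remark correctly identifies that the actual dichotomy is governed by whether $\Pi^+$ and $\Pi^-$ are proportional in $\mathrm{Cl}(X)\otimes\mathbb{Q}$, which is what Lemma~\ref{lemma:class-group-planes} and Corollary~\ref{corollary:class-group-simple-criterion} then test in practice.
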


The~surfaces $\Pi_1^+,\Pi_1^-,\ldots,\Pi_{16}^+,\Pi_{16}^-$ are called \emph{planes} \cite{Semple-Roth}.
They are not Cartier~divisors.
To describe their strict transforms on the~threefold $\widehat{X}$ from the~commutative~diagram~\eqref{equation:Prokhorov},
let us introduce the following notations:
\begin{itemize}
\item[(a)] let $E_i$ be the $\eta$-exceptional surfaces such that $\eta(E_i)=P_i$, where $1\leqslant i\leqslant 6$;
\item[(b)] let $\widehat{\Pi}_{i,j,k}$ be the strict transform on the threefold $\widehat{X}$ of the plane in $\mathbb{P}^3$ that passes through
the three distinct points $P_i$, $P_j$, $P_k$, where $1\leqslant i<j<k\leqslant 6$;
\item[(c)] let $\widehat{Q}_i$ be the strict transform of the irreducible quadric cone in $\mathbb{P}^3$
that contains all points $P_1,\ldots,P_6$ and is singular at the point $P_i$, where $1\leqslant i\leqslant 6$.
\end{itemize}
Then strict transforms of the~surfaces $\Pi_1^+,\Pi_1^-,\ldots,\Pi_{16}^+,\Pi_{16}^-$
on the threefold $\widehat{X}$ are
$$
E_1,E_2,\ldots,E_6,\widehat{\Pi}_{1,2,3},\widehat{\Pi}_{1,2,4},\ldots,\widehat{\Pi}_{3,5,6},\widehat{\Pi}_{4,5,6},\widehat{Q}_1,\widehat{Q}_2,\ldots\widehat{Q}_6.
$$
The involution $\sigma$ acts birationally on $\widehat{X}$ as a composition of flops of $\varphi$-contracted curves,
it~swaps $E_i\leftrightarrow\widehat{Q}_i$, and it swaps $\widehat{\Pi}_{i,j,k}\leftrightarrow\widehat{\Pi}_{r,s,t}$ such that~\mbox{$\{i,j,k,r,s,t\}=\{1,2,3,4,5,6\}$}.
For a non-trivial element $g\in\langle\tau,\mathbb{H}^\rho\rangle$,
the involution $\varphi^{-1}\circ g\circ\varphi$ is a composition of flops,
and either $\eta\circ\varphi^{-1}\circ g\circ\varphi\circ\eta^{-1}$ or $\eta\circ\varphi^{-1}\circ g\circ\sigma\circ\varphi\circ\eta^{-1}$
is a Cremona involution whose fundamental points are four points among $P_1,\ldots,P_6$ that swaps the remaining two points.

Now, let us give a criterion for $\mathrm{Cl}^G(X)\cong\mathbb{Z}$. To do this, we set
$$
\Pi^\pm=\sum_{i=1}^{16}\Pi_i^\pm.
$$
Then $\Pi^+$ and $\Pi^-$ are $\mathbb{H}^\rho$-invariant divisors, $\sigma(\Pi^+)=\Pi^-$ and $\Pi^++\Pi^-\sim 16H$.

\begin{lemma}
\label{lemma:class-group-planes}
One has $\mathrm{Cl}^{G}(X)\cong\mathbb{Z}$ is at least one of the~following conditions is satisfied:
\begin{itemize}
\item[(i)] the~group $G$ swaps $\Pi^+$ and $\Pi^-$;
\item[(ii)] the~divisor $\Pi^+$ is Cartier;
\item[(iii)] the~divisor $\Pi^-$ is Cartier;
\item[(iv)] the~surfaces $\Pi_1^+,\ldots,\Pi_{16}^+$ generate the~group $\mathrm{Cl}(X)$;
\item[(v)] the~surfaces $\Pi_1^-,\ldots,\Pi_{16}^-$ generate the~group $\mathrm{Cl}(X)$.
\end{itemize}
\end{lemma}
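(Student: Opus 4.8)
\emph{Proof proposal.} The plan is to reduce everything to a linear-algebra computation inside the rank-$\leqslant 2$ lattice $\mathrm{Cl}^{\mathbb{H}^\rho}(X)$. First I would record the standing facts. The group $\mathrm{Cl}(X)\cong\mathbb{Z}^7$ is torsion-free, so every subgroup is free; the ample generator $H$ is $\mathrm{Aut}(X)$-invariant, since $-K_X\sim 2H$ and $\mathrm{Pic}(X)=\mathbb{Z}[H]$, so $[H]\in\mathrm{Cl}^{G}(X)$; and, because $\mathbb{H}^\rho\subseteq G$ by our standing assumption, we have $\mathrm{Cl}^G(X)\subseteq\mathrm{Cl}^{\mathbb{H}^\rho}(X)$, which has rank $1$ or $2$ by Corollary~\ref{corollary:class-group-planes-H-invariant-part}. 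As $\mathrm{Cl}^G(X)$ is free and contains $[H]\ne 0$, it is isomorphic to $\mathbb{Z}$ as soon as its rank equals $1$. So in every case it suffices to prove $\mathrm{Cl}^G(X)\otimes\mathbb{Q}=\mathbb{Q}[H]$.

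The key tool is an explicit description of the invariant subspace. Because $\mathbb{H}^\rho$ acts simply transitively on each of the two $16$-element orbits $\{\Pi_1^+,\dots,\Pi_{16}^+\}$ and $\{\Pi_1^-,\dots,\Pi_{16}^-\}$, applying the averaging operator $\tfrac{1}{16}\sum_{h\in\mathbb{H}^\rho}h$ sends each $\Pi_i^{\pm}$ to $\tfrac{1}{16}[\Pi^{\pm}]$. Since those $32$ classes span $\mathrm{Cl}(X)\otimes\mathbb{Q}$ by Corollary~\ref{corollary:class-group-planes}, and averaging is a projection onto the invariants, I conclude $\mathrm{Cl}^{\mathbb{H}^\rho}(X)\otimes\mathbb{Q}=\langle[\Pi^+],[\Pi^-]\rangle_{\mathbb{Q}}$, subject to the single relation $[\Pi^+]+[\Pi^-]=16[H]$. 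Thus this invariant space is $\mathbb{Q}[H]$ exactly when $[\Pi^+]$ and $[\Pi^-]$ are proportional, and is $2$-dimensional otherwise.

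With this in hand, conditions (i)--(iii) are immediate. For (i), pick $g\in G$ with $g(\Pi^+)=\Pi^-$; then $g(\Pi^-)=16H-g(\Pi^+)=\Pi^+$, so $g$ swaps $[\Pi^+]\leftrightarrow[\Pi^-]$. Any $D\in\mathrm{Cl}^{G}(X)$ is $\mathbb{H}^\rho$-invariant, hence lies in $\langle[\Pi^+],[\Pi^-]\rangle_{\mathbb{Q}}$, and being $g$-fixed it lies in the $(+1)$-eigenspace of the swap, namely $\mathbb{Q}([\Pi^+]+[\Pi^-])=\mathbb{Q}[H]$; this forces rank $1$. For (ii), and symmetrically (iii), if $\Pi^+$ is Cartier then $[\Pi^+]\in\mathrm{Pic}(X)=\mathbb{Z}[H]$, whence $[\Pi^-]=16[H]-[\Pi^+]\in\mathbb{Q}[H]$ as well; so $\langle[\Pi^+],[\Pi^-]\rangle_{\mathbb{Q}}=\mathbb{Q}[H]$ is $1$-dimensional, giving $\mathrm{Cl}^{\mathbb{H}^\rho}(X)\cong\mathbb{Z}$ already, which implies the claim.

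Finally, conditions (iv)/(v) I would treat through the representation of $\mathbb{H}^\rho$ on the orbit. If $\Pi_1^+,\dots,\Pi_{16}^+$ generate $\mathrm{Cl}(X)$, the map $\mathbb{Z}[\mathbb{H}^\rho]\to\mathrm{Cl}(X)$ carrying the group basis to this free orbit is an $\mathbb{H}^\rho$-equivariant surjection out of the regular representation. Over $\mathbb{Q}$, taking $\mathbb{H}^\rho$-invariants is exact, and the invariants of the regular representation are $1$-dimensional, so $\mathrm{Cl}^{\mathbb{H}^\rho}(X)\otimes\mathbb{Q}$ has dimension $\leqslant 1$; containing $[H]$, it equals $\mathbb{Q}[H]$. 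Condition (v) follows identically, or by applying $\sigma$, which interchanges the two orbits and so carries generators to generators. The main obstacle, and the step on which all five cases rest, is the averaging identification in the second paragraph: one must be sure that $\mathbb{H}^\rho$ acts freely and transitively on each orbit, so that the average of a single plane is exactly $\tfrac1{16}\Pi^{\pm}$, and that the $32$ planes really do span $\mathrm{Cl}(X)\otimes\mathbb{Q}$; both are guaranteed by the transitivity of $\mathbb{H}$ on the set \eqref{equation:16-planes} of tropes and by Corollary~\ref{corollary:class-group-planes}.
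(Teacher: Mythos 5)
Your argument is correct and follows essentially the same route as the paper: the paper's own proof is a one-line appeal to Corollary~\ref{corollary:class-group-planes-H-invariant-part} together with $\mathbb{H}^\rho\subset G$, i.e.\ to the fact that $\mathrm{Cl}^{\mathbb{H}^\rho}(X)$ has rank at most $2$ and is rationally spanned by $[\Pi^+]$ and $[\Pi^-]$, which is precisely what your averaging computation establishes before the case-by-case check of (i)--(v). There is no gap; you have simply written out the details the paper leaves implicit.
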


\begin{proof}
The assertion follows from Corollary~\ref{corollary:class-group-planes-H-invariant-part}, since we assume that $\mathbb{H}^\rho\subset G$.
\end{proof}

This lemma is easy to apply if we fix $\mathscr{S}$ and the~group $G\subset\mathrm{Aut}(X)$ such that $\mathbb{H}\subset\upsilon(G)$.
For instance, to check whether the~surfaces $\Pi_1^+,\ldots,\Pi_{16}^+$ generate the~group $\mathrm{Cl}(X)$ or not,
we can use the~fact that $\mathrm{Cl}(X)\cong\mathbb{Z}^7$ is naturally equipped with an intersection form \cite{Prokhorov2013}.
Namely, fix a smooth del Pezzo surface $S\in |H|$, and let
$$
D_1\bullet D_2=D_1\big\vert_{S}\cdot D_2\big\vert_{S}\in\mathbb{Z}
$$
for any two Weil divisors $D_1$ and $D_2$ in $\mathrm{Cl}(X)$.
Then
$$
\Pi_i^\pm\bullet \Pi_j^\pm=\left\{\aligned
&0\ \text{if $i\ne j$ and $\Pi_i^\pm\cap \Pi_j^\pm$ does not contain curves},\\
&1\ \text{if $i\ne j$ and $\Pi_i^\pm\cap \Pi_j^\pm$ contains a curve},\\
&-1\ \text{if $i=j$ and $\Pi_i^\pm=\Pi_j^\pm$},\\
&2\ \text{if $i=j$ and $\Pi_i^\pm\ne \Pi_j^\pm$},\\
\endaligned
\right.
$$
where two $\pm$ in $\Pi_i^\pm$ and $\Pi_j^\pm$ are independent.

\begin{remark}
\label{remark:Weyl-group}
Let $\Lambda$ be the~sublattice in $\mathrm{Cl}(X)$ consisting of divisors $D$ such that $D\bullet H=0$.
Then $\Lambda$ is isomorphic to a root lattice of type $\mathrm{D}_6$ by \cite[Theorem~1.7]{Prokhorov2013},
and the~natural homomorphism $\mathrm{Aut}(X)\to\mathrm{Aut}(\Lambda)$ is injective \cite{Prokhorov2013},
where $\mathrm{Aut}\big(\Lambda\big)\cong(\mumu_2^5\rtimes\mathfrak{S}_6)\rtimes\mumu_2$.
\end{remark}

Applying Lemma~\ref{lemma:class-group-planes}, we get

\begin{corollary}
\label{corollary:class-group-simple-criterion}
If $\mathrm{rank}(\Pi_i^+\bullet \Pi_j^+)=7$ or $\mathrm{rank}(\Pi_i^-\bullet \Pi_j^-)=7$, then $\mathrm{Cl}^{\mathbb{H}^\rho}(X)\cong\mathbb{Z}$.
\end{corollary}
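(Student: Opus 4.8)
The plan is to reduce the statement to a single rank computation over $\mathbb{Q}$ and then invoke Corollary~\ref{corollary:class-group-planes-H-invariant-part}, which already pins $\mathrm{Cl}^{\mathbb{H}^\rho}(X)$ down to either $\mathbb{Z}$ or $\mathbb{Z}^2$; so all that remains is to exclude the rank-two case. Passing to $V:=\mathrm{Cl}(X)\otimes_{\mathbb{Z}}\mathbb{Q}\cong\mathbb{Q}^7$, I would first record that, because $\mathbb{H}^\rho$ is finite and we work in characteristic zero, taking $\mathbb{H}^\rho$-invariants commutes with $-\otimes\mathbb{Q}$, so $\mathrm{rank}\,\mathrm{Cl}^{\mathbb{H}^\rho}(X)=\dim_{\mathbb{Q}}V^{\mathbb{H}^\rho}$. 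Thus it suffices to prove $\dim_{\mathbb{Q}}V^{\mathbb{H}^\rho}=1$.

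Next I would translate the hypothesis on the Gram matrix into a statement about the $\mathbb{Q}$-span $W:=\langle\Pi_1^+,\dots,\Pi_{16}^+\rangle_{\mathbb{Q}}\subseteq V$ (treating the $+$ case; the $-$ case is identical). Writing the classes $\Pi_i^+$ as the columns of a $7\times 16$ integer matrix $M_0$ in a basis of $\mathrm{Cl}(X)$ and letting $B$ be the matrix of the form $\bullet$, the Gram matrix factors as $(\Pi_i^+\bullet\Pi_j^+)=M_0^{\mathsf{T}}BM_0$, whose rank is at most $\mathrm{rank}\,M_0=\dim_{\mathbb{Q}}W\le 7$. Hence the assumption $\mathrm{rank}(\Pi_i^+\bullet\Pi_j^+)=7$ forces $\dim_{\mathbb{Q}}W=7$, that is, $W=V$. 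I note that this direction needs no non-degeneracy of $\bullet$, only the crude inequality $\mathrm{rank}(AB)\le\mathrm{rank}A$.

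The representation-theoretic heart of the argument is the observation, recorded just above the statement, that $\{\Pi_1^+,\dots,\Pi_{16}^+\}$ is a single $\mathbb{H}^\rho$-orbit. Consequently the surjection $\mathbb{Q}^{16}\twoheadrightarrow W$ sending the $i$-th standard basis vector to $\Pi_i^+$ is $\mathbb{H}^\rho$-equivariant for the permutation action on $\mathbb{Q}^{16}$, and by exactness of invariants over $\mathbb{Q}$ it carries $(\mathbb{Q}^{16})^{\mathbb{H}^\rho}$ onto $W^{\mathbb{H}^\rho}$. Since the permutation action is transitive, $(\mathbb{Q}^{16})^{\mathbb{H}^\rho}$ is the line spanned by $e_1+\cdots+e_{16}$, so $W^{\mathbb{H}^\rho}=\mathbb{Q}\cdot\Pi^+$ is at most one-dimensional; it is exactly one-dimensional because $\Pi^+\neq 0$ (indeed $\Pi^++\Pi^-\sim 16H$). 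Combining this with $W=V$ from the previous step gives $\dim_{\mathbb{Q}}V^{\mathbb{H}^\rho}=1$, whence $\mathrm{Cl}^{\mathbb{H}^\rho}(X)\cong\mathbb{Z}$ by Corollary~\ref{corollary:class-group-planes-H-invariant-part}.

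The step I expect to be the main conceptual obstacle, rather than the routine linear algebra, is making sure the logic does not secretly require the $\Pi_i^+$ to generate $\mathrm{Cl}(X)$ over $\mathbb{Z}$ as in condition (iv) of Lemma~\ref{lemma:class-group-planes}, which the Gram rank does not provide: a rank-$7$ Gram matrix only yields a finite-index sublattice. The point of working rationally and computing $\dim_{\mathbb{Q}}V^{\mathbb{H}^\rho}$ directly is precisely to sidestep integrality, since a $\mathbb{Q}$-spanning orbit already forces the invariants to collapse to a single line; this is exactly what lets the weaker numerical hypothesis suffice.
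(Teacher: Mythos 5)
Your proof is correct, and its skeleton is the same as the paper's: reduce to Corollary~\ref{corollary:class-group-planes-H-invariant-part} (so only the rank-two case must be excluded) and use the fact that $\{\Pi_1^+,\ldots,\Pi_{16}^+\}$ is a single transitive $\mathbb{H}^\rho$-orbit, so that the invariant part of its span is the line through $\Pi^+$. Where you genuinely improve on the paper's one-line justification ("Applying Lemma~\ref{lemma:class-group-planes}, we get...") is in noticing that the hypothesis $\mathrm{rank}(\Pi_i^+\bullet\Pi_j^+)=7$ does not literally deliver condition (iv) of Lemma~\ref{lemma:class-group-planes}: the factorization of the Gram matrix as $M_0^{\mathsf T}BM_0$ only shows that the sixteen classes span $\mathrm{Cl}(X)\otimes\mathbb{Q}$, i.e.\ generate a finite-index sublattice, not the full lattice $\mathrm{Cl}(X)\cong\mathbb{Z}^7$. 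Your rational-invariants computation, $\dim_{\mathbb{Q}}V^{\mathbb{H}^\rho}=\dim_{\mathbb{Q}}W^{\mathbb{H}^\rho}\leqslant\dim_{\mathbb{Q}}(\mathbb{Q}^{16})^{\mathbb{H}^\rho}=1$, shows that rational spanning already suffices, since the rank of $\mathrm{Cl}^{\mathbb{H}^\rho}(X)$ only depends on $\mathrm{Cl}(X)\otimes\mathbb{Q}$ as an $\mathbb{H}^\rho$-representation. So what your route buys is a clean repair of this integrality gap; what the paper's route buys is brevity, at the cost of tacitly conflating "generate up to finite index" with "generate".
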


Let us show how to apply Corollary~\ref{corollary:class-group-simple-criterion}.

\begin{example}
\label{example:class-group-planes-48-50}
Let us use assumptions and notations of Example~\ref{example:recover-curve-from-quartic}.
Suppose, in addition, that $\rho\colon\mathbb{H}\to\mumu_2$ is the~trivial homomorphism.
Therefore, we have $\mathbb{H}^\rho=\mathbb{H}$.
Set $t=\frac{2s}{s^2 + 1}$.
Observe that $\pi^*(\Pi_1)$ is given in $\mathbb{P}(1,1,1,1,2)$ by the~following equations:
$$
\left\{\aligned
&x_0+x_1+x_2+\frac{2s}{s^2+1}x_3=0,\\
&w^2=\frac{(s^2-1)^2}{(s^2+1)^4}\big((s^2+1)x_1^2+(s^2+1)x_1x_2+2sx_1x_3+(s^2+1)x_2^2+2sx_2x_3-(s^2+1)x_3^2\big)^2.
\endaligned
\right.
$$
Thus, without loss of generality, we may assume that the~surface $\Pi_1^+$ is given by
$$
\left\{\aligned
&x_0+x_1+x_2+\frac{2s}{s^2+1}x_3=0,\\
&w=\frac{s^2-1}{(s^2+1)^2}\big((s^2+1)x_1^2+(s^2+1)x_1x_2+2sx_1x_3+(s^2+1)x_2^2+2sx_2x_3-(s^2+1)x_3^2\big).
\endaligned
\right.
$$
Then the~defining equations of the~remaining surfaces $\Pi_2^+,\ldots,\Pi_{16}^+$ are listed in Figure~\ref{figure:16-surfaces}.
Now, the~intersection matrix $(\Pi_i^+\bullet \Pi_j^+)$ can be computed as follows:
$$
\left(\begin{array}{cccccccccccccccc}
-1&1&1&1&1&0&1&0&1&0&1&0&1&1&0&0\\
1&-1&1&1&0&1&0&1&1&0&1&0&0&0&1&1\\
1&1&-1&1&1&0&1&0&0&1&0&1&0&0&1&1\\
1&1&1&-1&0&1&0&1&0&1&0&1&1&1&0&0\\
1&0&1&0&-1&1&1&1&1&1&0&0&1&0&1&0\\
0&1&0&1&1&-1&1&1&0&0&1&1&1&0&1&0\\
1&0&1&0&1&1&-1&1&0&0&1&1&0&1&0&1\\
0&1&0&1&1&1&1&-1&1&1&0&0&0&1&0&1\\
1&1&0&0&1&0&0&1&-1&1&1&1&1&0&0&1\\
0&0&1&1&1&0&0&1&1&-1&1&1&0&1&1&0\\
1&1&0&0&0&1&1&0&1&1&-1&1&0&1&1&0\\
0&0&1&1&0&1&1&0&1&1&1&-1&1&0&0&1\\
1&0&0&1&1&1&0&0&1&0&0&1&-1&1&1&1\\
1&0&0&1&0&0&1&1&0&1&1&0&1&-1&1&1\\
0&1&1&0&1&1&0&0&0&1&1&0&1&1&-1&1\\
0&1&1&0&0&0&1&1&1&0&0&1&1&1&1&-1
\end{array}\right)
$$
The rank of this matrix is $7$. Therefore, we conclude that $\mathrm{Cl}^{\mathbb{H}}(X)\cong\mathbb{Z}$ by Corollary~\ref{corollary:class-group-simple-criterion}.
Note that we can also prove this using Lemma~\ref{lemma:class-group-planes}(ii).
To do this, it is enough to show that the~divisor $\Pi^+$ is a Cartier divisor, which can be done locally at any  point in $\mathrm{Sing}(X)$.
For~instance, let $P=[t:1:1:1:0]\in\mathrm{Sing}(X)$.
Among $\Pi_1^+,\ldots,\Pi_{16}^+$, only
$$
\Pi_2^+,\Pi_3^+,\Pi_7^+,\Pi_8^+,\Pi_{10}^+,\Pi_{11}^+
$$
pass through $P$. Choosing a generator of the~local class group $\mathrm{Cl}_P(X)\cong\mathbb{Z}$,
we see that the~classes of the~surfaces $\Pi_2^+$, $\Pi_3^+$, $\Pi_7^+$, $\Pi_8^+$, $\Pi_{10}^+$, $\Pi_{11}^+$ are $1$, $-1$, $1$, $-1$, $1$, $-1$, respectively.
Hence, we see that $\Pi^+$ is locally Cartier at $P$, which implies that $\Pi^+$ is globally Cartier,
because the~group $\mathbb{H}$ acts transitively on the~set $\mathrm{Sing}(X)$.
\end{example}

\begin{figure}[h!]
\caption{Defining equations of the~surfaces $\Pi_1^+,\ldots,\Pi_{16}^+$ in Example~\ref{example:class-group-planes-48-50}.}
\label{figure:16-surfaces}
\begin{center}
\renewcommand\arraystretch{1.55}
\begin{tabular}{|c||c|}
\hline
\shortstack{$\Pi_1^+$\\{ }} & \shortstack{{ }\\$x_0+x_1+x_2+\frac{2s}{s^2+1}x_3=0$\\$w=\frac{s^2-1}{(s^2+1)^2}\big((s^2+1)x_1^2+(s^2+1)x_1x_2+2sx_1x_3+(s^2+1)x_2^2+2sx_2x_3-(s^2+1)x_3^2\big)$}\\
\hline
\shortstack{$\Pi_2^+$\\{ }} & \shortstack{{ }\\{$x_0-x_1+x_2+\frac{2s}{s^2+1}x_3=0$}\\ {$w=\frac{s^2-1}{(s^2+1)^2}\big((s^2+1)x_1^2-(s^2+1)x_1x_2+2sx_1x_3+(s^2+1)x_2^2-2sx_2x_3-(s^2+1)x_3^2\big)=0$}}\\
\hline
\shortstack{$\Pi_3^+$\\{ }} & \shortstack{{ }\\{$x_0+x_1-x_2-\frac{2s}{s^2+1}x_3=0$}\\ {$w=\frac{s^2-1}{(s^2+1)^2}\big((s^2+1)x_1^2-(s^2+1)x_1x_2-2sx_1x_3+(s^2+1)x_2^2+2sx_2x_3-(s^2+1)x_3^2\big)=0$}}\\
\hline
\shortstack{$\Pi_4^+$\\{ }} & \shortstack{{ }\\{$x_0-x_1-x_2+\frac{2s}{s^2+1}x_3=0$}\\ {$w=\frac{s^2-1}{(s^2+1)^2}\big((s^2+1)x_1^2+(s^2+1)x_1x_2-2sx_1x_3+(s^2+1)x_2^2-2sx_2x_3-(s^2+1)x_3^2\big)=0$}}\\
\hline
\shortstack{$\Pi_5^+$\\{ }} & \shortstack{{ }\\{$x_0+x_1+\frac{2s}{s^2+1}x_2+x_3=0$}\\ {$w=\frac{s^2-1}{(s^2+1)^2}\big((s^2+1)x_0^2+2sx_2x_0+(s^2+1)x_3x_0-(s^2+1)x_2^2+2sx_2x_3+(s^2+1)x_3^2\big)=0$}}\\
\hline
\shortstack{$\Pi_6^+$\\{ }} & \shortstack{{ }\\{$x_0-x_1+\frac{2s}{s^2+1}x_2-x_3=0$}\\ {$w=\frac{s^2-1}{(s^2+1)^2}\big((s^2+1)x_0^2+2sx_2x_0-(s^2+1)x_3x_0-(s^2+1)x_2^2-2sx_2x_3+(s^2+1)x_3^2\big)=0$}}\\
\hline
\shortstack{$\Pi_7^+$\\{ }} & \shortstack{{ }\\{$\frac{2s}{s^2+1}x_2-x_1-x_0+x_3=0$ }\\ {$w=\frac{s^2-1}{(s^2+1)^2}\big((s^2+1)x_0^2-2sx_2x_0-(s^2+1)x_3x_0-(s^2+1)x_2^2+2sx_2x_3+(s^2+1)x_3^2\big)=0$}}\\
\hline
\shortstack{$\Pi_8^+$\\{ }} & \shortstack{{ }\\{ $x_0-x_1-\frac{2s}{s^2+1}x_2+x_3=0$}\\ {$w=\frac{s^2-1}{(s^2+1)^2}\big((s^2+1)x_0^2-2sx_2x_0+(s^2+1)x_3x_0-(s^2+1)x_2^2-2sx_2x_3+(s^2+1)x_3^2\big)=0$ }}\\
\hline
\shortstack{$\Pi_9^+$\\{ }} & \shortstack{{ }\\{ $x_0+\frac{2s}{s^2+1}x_1+x_2+x_3=0$}\\ { $w=\frac{s^2-1}{(s^2+1)^2}\big((s^2+1)x_0^2+2sx_1x_0+(s^2+1)x_3x_0-(s^2+1)x_1^2+2sx_1x_3+(s^2+1)x_3^2\big)=0$}}\\
\hline
\shortstack{$\Pi_{10}^+$\\{ }} & \shortstack{{ }\\{ $x_0-\frac{2s}{s^2+1}x_1-x_2+x_3=0$}\\ {$w=\frac{s^2-1}{(s^2+1)^2}\big((s^2+1)x_0^2-2sx_1x_0+(s^2+1)x_3x_0-(s^2+1)x_1^2-2sx_1x_3+(s^2+1)x_3^2\big)=0$ }}\\
\hline
\shortstack{$\Pi_{11}^+$\\{ }} & \shortstack{{ }\\{ $x_0-\frac{2s}{s^2+1}x_1+x_2-x_3=0$}\\ {$w=\frac{s^2-1}{(s^2+1)^2}\big((s^2+1)x_0^2-2sx_1x_0-(s^2+1)x_3x_0-(s^2+1)x_1^2+2sx_1x_3+(s^2+1)x_3^2\big)=0$ }}\\
\hline
\shortstack{$\Pi_{12}^+$\\{ }} & \shortstack{{ }\\{ $x_0+\frac{2s}{s^2+1}x_1-x_2-x_3=0$}\\ {$w=\frac{s^2-1}{(s^2+1)^2}\big((s^2+1)x_0^2+2sx_1x_0-(s^2+1)x_3x_0-(s^2+1)x_1^2-2sx_1x_3+(s^2+1)x_3^2\big)=0$ }}\\
\hline
\shortstack{$\Pi_{13}^+$\\{ }} & \shortstack{{ }\\{$\frac{2s}{s^2+1}x_0+x_1+x_2+x_3=0$ }\\ {$w=\frac{s^2-1}{(s^2+1)^2}\big((s^2+1)x_0^2-2sx_1x_0-2sx_2x_0-(s^2+1)x_1^2-(s^2+1)x_1x_2-(s^2+1)x_2^2\big)=0$ }}\\
\hline
\shortstack{$\Pi_{14}^+$\\{ }} & \shortstack{{ }\\{ $\frac{2s}{s^2+1}x_0-x_1-x_2+x_3=0$}\\ {$w=\frac{s^2-1}{(s^2+1)^2}\big((s^2+1)x_0^2+2sx_1x_0+2sx_2x_0-(s^2+1)x_1^2-(s^2+1)x_1x_2-(s^2+1)x_2^2\big)=0$ }}\\
\hline
\shortstack{$\Pi_{15}^+$\\{ }} & \shortstack{{ }\\{$\frac{2s}{s^2+1}x_0-x_1+x_2-x_3=0$ }\\ {$w=\frac{s^2-1}{(s^2+1)^2}\big((s^2+1)x_0^2+2sx_1x_0-2sx_2x_0-(s^2+1)x_1^2+(s^2+1)x_1x_2-(s^2+1)x_2^2\big)=0$ }}\\
\hline
\shortstack{$\Pi_{16}^+$\\{ }} & \shortstack{{ }\\{$\frac{2s}{s^2+1}x_0+x_1-x_2-x_3=0$ }\\ {$w=\frac{s^2-1}{(s^2+1)^2}\big((s^2+1)x_0^2-2sx_1x_0+2sx_2x_0-(s^2+1)x_1^2+(s^2+1)x_1x_2-(s^2+1)x_2^2\big)=0$ }}\\
\hline
\end{tabular}
\end{center}
\end{figure}

\begin{example}
\label{example:48-50-class-group}
Let us use assumptions and notations of Example~\ref{example:recover-curve-from-quartic}.
Then $\mathrm{Aut}(X)$ contains a unique subgroup $G$ such that $G\cong\upsilon(G)\cong\mumu_2^4\rtimes\mumu_3$,
and $\upsilon(G)$ is generated~by
$$
A_1,A_2,A_3,A_4,
\begin{pmatrix}
0 & 0 & 1 & 0\\
1 & 0 & 0 & 0\\
0 & 1 & 0 & 0\\
0 & 0 & 0 & 1
\end{pmatrix}.
$$
One can check that $G$ contains the~subgroup $\mathbb{H}=\mathbb{H}^\rho$, where $\rho$ is a trivial homomorphism.
Therefore, it follows from Example~\ref{example:class-group-planes-48-50} that $\mathrm{Cl}^G(X)\cong\mathbb{Z}$.
\end{example}

If $\mathrm{Cl}^{G}(X)\not\cong\mathbb{Z}$, we have the~following result.

\begin{lemma}
\label{lemma:congruences}
Let $G$ be any subgroup in $\mathrm{Aut}(X)$ such that $\mathrm{Cl}^G(X)\not\cong\mathbb{Z}$ and $\mathbb{H}\subseteq\upsilon(G)$.
Then $\mathrm{Cl}^{G}(X)\cong\mathbb{Z}^2$, and exists a $G$-Sarkisov link
$$
\xymatrix{
&&V\ar@{-->}[rrrr]^{\varsigma}\ar@{->}[dll]_{\phi}\ar@{->}[d]^{\varpi}&&&&V\ar@{->}[d]_{\varpi}\ar@{->}[drr]^{\phi}\\%
Z&&X\ar@{->}[rrrr]^{\sigma}&&&&X&& Z}
$$
where $\varpi$ is a $G$-equivariant small resolution,
$\varsigma$ is a birational involution that flops sixteen \mbox{$\varpi$-contracted} curves,
$\phi$ is a $\mathbb{P}^1$-bundle, and $Z$ is a smooth del Pezzo surface of degree $4$.
\end{lemma}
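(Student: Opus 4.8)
The plan is to determine $\mathrm{Cl}^G(X)$ first, and then to realise the whole link as the $G$-equivariant two-ray game on a small resolution of $X$ cut out by the Weil divisor $\Pi^+$.

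For the rank, note that $\mathbb{Z}[H]\subseteq\mathrm{Cl}^G(X)\subseteq\mathrm{Cl}^{\mathbb{H}^\rho}(X)$ since we may assume $\mathbb{H}^\rho\subseteq G$. By Corollary~\ref{corollary:class-group-planes-H-invariant-part} the right-hand group has rank at most $2$, while $\mathrm{Cl}(X)\cong\mathbb{Z}^7$ is torsion-free by Corollary~\ref{corollary:class-group}; as $\mathrm{Cl}^G(X)\not\cong\mathbb{Z}$ this forces $\mathrm{Cl}^G(X)\cong\mathbb{Z}^2$. Moreover $G$ cannot interchange $\Pi^+$ and $\Pi^-$, for otherwise Lemma~\ref{lemma:class-group-planes}(i) would give $\mathrm{Cl}^G(X)\cong\mathbb{Z}$; hence both $\Pi^+$ and $\Pi^-$ are $G$-invariant, and by Lemma~\ref{lemma:class-group-planes}(ii),(iii) neither is Cartier. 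Since $\mathbb{H}$ acts transitively on $\mathrm{Sing}(X)$ and $\Pi^+$ is $\mathbb{H}$-invariant, a Weil divisor that is Cartier at one node would be Cartier at all of them, hence everywhere; so $\Pi^+$ is non-Cartier at each of the sixteen nodes.

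Next I would build $V$. The reflexive sheaf $\mathcal{O}_X(\Pi^+)$ is $G$-linearised because $\Pi^+$ is $G$-invariant, so the relative $\mathrm{Proj}$ $\varpi\colon V:=\mathrm{Proj}_X\bigoplus_{m\geqslant 0}\mathcal{O}_X(m\Pi^+)\to X$ is a $G$-equivariant projective morphism on which the strict transform $\widetilde{\Pi}^+$ is $\varpi$-ample. As $\Pi^+$ is non-Cartier at every node and the nodes are ordinary double points, $\varpi$ is small and resolves all of them, so $V$ is smooth and $\varpi$ contracts exactly sixteen disjoint smooth rational curves $\ell_1,\dots,\ell_{16}$ lying over $\mathrm{Sing}(X)$. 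Being small, $\varpi$ is crepant, so $K_V=\varpi^*K_X$ and $-K_V\sim 2\varpi^*H$ is nef and big; and $\mathrm{Pic}^G(V)=\mathrm{Cl}^G(X)\cong\mathbb{Z}^2$, so the $G$-invariant Picard rank of $V$ equals $2$.

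Then I would run the $G$-equivariant two-ray game. The cone $\overline{\mathrm{NE}}^G(V)$ is two-dimensional; one extremal ray is $R_1=\mathbb{R}_{\geqslant 0}[\ell_i]$, contracted by $\varpi$, on which $K_V$ vanishes. Let $R$ be the other ray. Since $\varpi^*H$ is nef and big and vanishes only on $R_1$, we get $-K_V\cdot R>0$, so the contraction $\phi:=\mathrm{cont}_R\colon V\to Z$ is a $K_V$-negative $G$-extremal contraction. I would argue $\phi$ is of fibre type with $\dim Z=2$: it cannot contract $V$ to a point (that would give $G$-Picard rank $1$), and a divisorial contraction is excluded by comparison with the explicit model $\widehat{X}$ of diagram~\eqref{equation:Prokhorov}, the lattice data of Remark~\ref{remark:Weyl-group}, and \cite{Prokhorov2013}; there the curves $\ell_1,\dots,\ell_{16}$ are carried to the sixteen lines of a smooth del Pezzo surface $Z$ of degree $4$, and $\phi$ is a $\mathbb{P}^1$-bundle. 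Finally, because $\sigma^*\Pi^+=\Pi^-$, the central involution $\sigma$ lifts to an isomorphism of $V$ onto the small resolution attached to $\Pi^-$, i.e.\ onto the flop of $V$ in $\ell_1,\dots,\ell_{16}$; this flop is the birational involution $\varsigma$, it satisfies $\varpi\circ\varsigma=\sigma\circ\varpi$, and it preserves the ray $R$, hence descends along $\phi$ to $Z$, assembling the stated commutative diagram.

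The main obstacle is precisely the identification of the second contraction: showing that $\phi$ is genuinely of fibre type onto a surface, that this surface $Z$ is a smooth del Pezzo surface of degree exactly $4$, and that $\phi$ has no degenerate fibres, so that it is a $\mathbb{P}^1$-bundle rather than a conic bundle. Controlling the fibres over the images of the $\ell_i$ and verifying $K_Z^2=4$ is where the real computation lies; I expect to carry it out by transporting everything to $\widehat{X}$, where the six points $P_1,\dots,P_6$, the fifteen connecting lines and the twisted cubic $C_3$ make the numerical geometry transparent, and then checking that the resulting fibration is $G$-equivariant.
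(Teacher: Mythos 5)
Your overall strategy---pass to a $G$-equivariant small resolution $\varpi\colon V\to X$ and run the $G$-equivariant two-ray game on the second extremal ray---is the same as the paper's, and the opening rank computation ($\mathrm{Cl}^G(X)\cong\mathbb{Z}^2$, neither $\Pi^+$ nor $\Pi^-$ Cartier, $V$ built from $\mathcal{O}_X(\Pi^+)$) is fine. But the proposal has genuine gaps precisely where the paper does its real work. First, you never exclude the case where the second contraction $\phi$ is a del Pezzo fibration over $Z\cong\mathbb{P}^1$: you rule out contraction to a point and (by assertion) a divisorial contraction, but not a fibration over a curve. The paper handles this by noting that the general fibre would be $\mathbb{P}^1\times\mathbb{P}^1$ (adjunction plus $-K_V\sim\varpi^*(2H)$), that all fibres of such a fibration are reduced and irreducible by \cite{Fujita}, and hence $\mathrm{rk}\,\mathrm{Cl}(V)\leqslant 3$, contradicting $\mathrm{Cl}(V)\cong\mathbb{Z}^7$. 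Second, your exclusion of the divisorial case is only a pointer to ``comparison with the explicit model, the lattice data, and \cite{Prokhorov2013}''; that is not an argument. The paper's actual reason is concrete and uses the hypothesis $\mathbb{H}\subseteq\upsilon(G)$ essentially: the exceptional divisor $E$ of a birational extremal contraction would be a copy of $\mathbb{P}^2$ mapping onto one of the planes $\Pi_i^{\pm}$, its $G$-orbit would consist of at least $16$ pairwise disjoint such surfaces (disjoint because all curves in $E$ lie on one extremal ray), forcing $\mathrm{rk}\,\mathrm{Cl}(V)\geqslant 17>7$.

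Third, you explicitly defer the identification of $Z$ as a smooth del Pezzo surface of degree exactly $4$ and of $\phi$ as a $\mathbb{P}^1$-bundle rather than a conic bundle; this is the heart of the lemma, and ``I expect to carry it out on $\widehat{X}$'' does not discharge it. The paper obtains $V\cong\mathbb{P}(\mathcal{E})$ over a smooth weak del Pezzo surface $Z$ from \cite[Lemma~2.5]{CasagrandeJahnkeRadloff} (after showing, via the relative MMP over $Z$, that the extremal contraction $\alpha$ coincides with $\phi$), computes $K_Z^2=4$ from $\mathrm{rk}\,\mathrm{Cl}(Z)=6$, and then shows $Z$ is genuinely del Pezzo by analysing the strict transforms $\widetilde{\Pi}_i^{\pm}$: for a general fibre $C$ of $\phi$ one has $\widetilde{\Pi}_i^{+}\cdot C+\widetilde{\Pi}_i^{-}\cdot C=\varpi^*(H)\cdot C=1$, so after relabelling $\phi$ restricts to a birational morphism $\widetilde{\Pi}_i^{+}\to Z$; since $\widetilde{\Pi}_i^{+}$ is a blow-up of $\Pi_i^{+}\cong\mathbb{P}^2$ at $k\leqslant 5$ of the six nodes lying on a conic (the case $k=6$ being excluded because then $\widetilde{\Pi}_i^{-}\cong\mathbb{P}^2$ could not be mapped to a curve), it is a genuine del Pezzo surface of degree $9-k\geqslant 4$, and $K_Z^2=4$ forces $k=5$ and $\widetilde{\Pi}_i^{+}\cong Z$. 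Some argument of this type is needed; without it your proposal establishes the rank statement but not the Sarkisov link.
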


\begin{proof}
By Corollary~\ref{corollary:class-group-planes-H-invariant-part}, one has $\mathrm{Cl}^{G}(X)\cong\mathbb{Z}^2$, but $\mathrm{Pic}(X)\cong\mathbb{Z}$.
Using this and the~fact that $G$ acts transitively on the~set $\mathrm{Sing}(X)$,
we see that there exists a~$G$-equivariant small resolution of singularities $\varpi\colon V\to X$.
Then $-K_V\sim\varpi^*(2H)$.

Now, applying $G$-equivariant Minimal Model Program to $V$,
we obtain a $G$-equivariant morphism $\phi\colon V\to Z$ such that $\phi_{*}(\mathcal{O}_{V})=\mathcal{O}_Z$,
and one of the~following possibilities hold:
\begin{enumerate}
\item $Z$ is a possibly singular Fano threefold, and $\phi$ is birational;
\item $Z$ is a normal surface, and $\phi$ is a conic bundle;
\item $Z\cong\mathbb{P}^1$, and $\phi$ is a del Pezzo fibration.
\end{enumerate}
Note that $\mathrm{rk}\,\mathrm{Cl}^G(V/Z)=1$. In particular, the~divisor $-K_V$ is $\phi$-ample.

If $\phi$ is a del Pezzo fibration, then its general fiber is $\mathbb{P}^1\times\mathbb{P}^1$ by the~adjunction formula, because $-K_V\sim\varpi^*(2H)$.
Moreover, in this case all fibers of the~morphism $\phi$ are reduced and irreducible \cite{Fujita},
which gives $\mathrm{rk}\,\mathrm{Cl}(V)\leqslant 3$, which is impossible since  $\mathrm{Cl}(V)\cong\mathbb{Z}^7$.
Therefore, either $\phi$ is birational, or $\phi$ is a conic bundle.

Now, applying relative  Minimal Model Program to the threefold $V$ over $Z$, we obtain a~commutative diagram
$$
\xymatrix{
&V\ar@{->}[dl]_{\phi}\ar@{->}[dr]^{\alpha}&\\%
Z&&U\ar@{->}[ll]^{\beta}}
$$
where $\alpha$ is an~extremal contraction, and $\beta$ is a morphism with connected fibers.

Suppose that $\alpha$ is birational. Let $E$ be the~$\alpha$-exceptional surface.
Then $U$ is a~smooth threefold, and $\alpha$ is a usual blow up of a point in $U$ (see \cite[Lemma~2.8]{CasagrandeJahnkeRadloff} or \cite{Cutkosky,Mori}).
Then $E\cong\mathbb{P}^2$ and $E\vert_{E}\cong\mathcal{O}_{\mathbb{P}^2}(-1)$.
So, the~adjunction formula gives $\varpi^*(H)\vert_{E}\cong\mathcal{O}_{\mathbb{P}^2}(1)$.
Therefore, we conclude that $\pi\circ\varpi(E)\cong\varpi(E)\cong E\cong\mathbb{P}^2$ and $\pi\circ\varpi(E)$ is a plane in $\mathbb{P}^3$,
which implies that $\varpi(E)=\Pi_i^+$ or $\varpi(E)=\Pi_i^-$ for some $i\in\{1,\ldots,16\}$.

On the~other hand, for every $g\in G$, we have
$$
g(E)\cap E=\varnothing,
$$
because all curves in $E$ are contained in one extremal ray of the~Mori cone $\overline{\mathrm{NE}(V)}$.
Moreover, since $\mathbb{H}\subseteq\upsilon(G)$, the~$G$-orbit of the~surface $E$ consists of at least $16$ surfaces.
This implies that $\mathrm{rk}\,\mathrm{Cl}(V)\geqslant 17$,
which is impossible, since $\mathrm{Cl}(V)\cong\mathrm{Cl}(X)\cong\mathbb{Z}^7$.

Thus, we conclude that $\alpha$ is not birational, so $\phi$ is not birational either.
Hence, we see that $\phi$ is a conic bundle. Then $\alpha$ is also a conic bundle,  because otherwise $\alpha$ would be birational.
Since general fibers of both conic bundles $\alpha$ and $\beta$ are the~same,
they generate one extremal ray of the~Mori cone $\overline{\mathrm{NE}(V)}$.
This implies that $\beta$ must be an isomorphism.
Therefore, we may assume that $\alpha=\phi$ and $\beta=\mathrm{Id}_U$.

Now, it follows from \cite[Lemma~2.5]{CasagrandeJahnkeRadloff} that  $Z$ is smooth,
it is a weak del Pezzo surface, and $V\cong\mathbb{P}(\mathcal{E})$
for some rank two vector bundle $\mathcal{E}$ on the~surface $Z$.

To complete the~proof, it is enough to show that $Z$ is a del Pezzo surface of degree $4$.
First, we observe that  $\mathrm{Cl}(Z)\cong\mathbb{Z}^6$, since $\mathrm{Cl}(V)\cong\mathbb{Z}^7$.
This gives $K_Z^2=4$.

Let $C$ be a general fiber of the~$\mathbb{P}^1$-bundle $\phi$.
Then we have $2H\cdot\varpi(C)=-K_V\cdot C=2$,
which implies that $\pi\circ\phi(C)$ is a line in $\mathbb{P}^3$
that is tangent to the~surface $\mathscr{S}$ at two points.
Now, for $i\in\{1,\ldots,16\}$, let $\widetilde{\Pi}_i^\pm$ be the~proper transform on $V$ of the~surface $\Pi_i^\pm$.~Then
$$
1=\varpi^*(H)\cdot C=\big(\widetilde{\Pi}_i^++\widetilde{\Pi}_i^-\big)\cdot S=\widetilde{\Pi}_i^+\cdot C+\widetilde{\Pi}_i^-\cdot C.
$$
Thus, without loss of generality, we may assume that $\widetilde{\Pi}_i^+\cdot C=1$ and $\widetilde{\Pi}_i^-\cdot C=0$~for~every~$i$.
Then $\widetilde{\Pi}_i^-$ is mapped to a curve in $Z$,
and $\phi$ induces a birational morphism $\widetilde{\Pi}_i^+\to Z$.

Recall that $\varpi$  induces a birational morphism $\widetilde{\Pi}_i^+\to\Pi_i^+$ that is a blow up of $k\geqslant 0$ points in the~intersection $\Pi_i^+\cap\mathrm{Sing}(X)$,
where $k\leqslant 6$, since $\Pi_i^+\cap\mathrm{Sing}(X)$ consists of~six~points.
Moreover, the morphism $\varpi$ also induces a birational morphism $\widetilde{\Pi}_i^-\to\Pi_i^-$ that blows up the~remaining $6-k$ points in $\Pi_i^+\cap\mathrm{Sing}(X)=\Pi_i^-\cap\mathrm{Sing}(X)$.
We also know that the~intersection $\Pi_i^+\cap\mathrm{Sing}(X)$ is contained in an irreducible conic in $\Pi_i^+\cong\mathbb{P}^2$. Then
\begin{itemize}
\item either $k\leqslant 5$ and $\widetilde{\Pi}_i^+$ is a smooth del Pezzo surface of degree $9-k$,
\item or $k=6$ and $\widetilde{\Pi}_i^+$ is a smooth weak del Pezzo surface of degree $3$.
\end{itemize}
If $k=6$, then $\widetilde{\Pi}_i^-\cong\Pi_i^-\cong\mathbb{P}^2$, and the surface $\widetilde{\Pi}_i^-$ cannot be mapped to a curve by $\phi$.
Thus,~we conclude that $k\leqslant 5$,
and $\widetilde{\Pi}_i^+$ is a smooth del Pezzo surface of degree $9-k\geqslant 4$.
Since $\phi$ induces a birational morphism $\widetilde{\Pi}_i^+\to Z$ and $K_Z^2=4$,
we get $k=5$ and $\widetilde{\Pi}_i^+\cong Z$, which implies that $Z$ is a del Pezzo surface of degree $4$,
and $\phi(\widetilde{\Pi}_i^-)$ is a $(-1)$-curve.
\end{proof}

Before we proceed, let us make few remarks:
\begin{enumerate}
\item
In the proof of Lemma~\ref{lemma:congruences}, we show that general fibers of the~$\mathbb{P}^1$-bundle $\phi$
are mapped by the morphism $\pi\circ\varpi$ to the~lines in $\mathbb{P}^3$ tangent to $\mathscr{S}$ at two points.
These lines form a~classical congruence in $\mathbb{P}^3$ of degree $(2,2)$,
which is related to the~quartic surface $\mathscr{S}$, see \cite{Kummer1865,Kummer1866,Caporali,Jessop,GeemenPreviato,Dolgachev2020}, \cite[Example~5.5]{ArrondoBertoliniTurrini},~\cite[Example~2.3]{PonceSturmfelsTrager}.
This implies that the weak Fano threefold $V$ in Lemma~\ref{lemma:congruences} is isomorphic to $\mathbb{P}(\mathcal{E})$,
where $\mathcal{E}$ is a rank two vector bundle on the quartic del Pezzo surface $Z\subset\mathrm{Gr}(2,4)$,
which is obtained by restricting to $Z$ the universal quotient bundle from  $\mathrm{Gr}(2,4)$.

\item It was pointed to us by Alexander Kuznetsov that the~diagram in Lemma~\ref{lemma:congruences} can be constructed as follows.
We know from \cite{SzurekWisniewski} the existence of the following diagram:
$$
\xymatrix{
&&\mathbb{P}\big(\mathscr{N}\big)\cong\mathbb{P}\big(\mathscr{E}\big)\ar@{->}[dll]_{\pi_{\mathbb{P}^3}}\ar@{->}[drr]^{\pi_{\mathcal{Q}}}&&\\%
\mathbb{P}^3&&&&\mathcal{Q}}
$$
where $\mathscr{N}$ is the rank two null-correlation vector bundle on $\mathbb{P}^3$,
$\mathcal{Q}$ is a smooth quadric threefold,
$\mathscr{E}$ is a rank two vector bundle on the quadric $\mathcal{Q}\subset\mathrm{Gr}(2,4)$ obtained
by restricting to $\mathcal{Q}$ the universal quotient bundle from the Grassmannian,
and $\pi_{\mathbb{P}^3}$ and $\pi_{\mathcal{Q}}$ are projections.
Thus, since $Z$ is cut out on $\mathcal{Q}$ by another quadric,
we can identify $V$ with a preimage of the del Pezzo surface $Z$ in the~fourfold~$\mathbb{P}(\mathscr{E})$.
Then $\pi_{\mathbb{P}^3}$ induces a morphism $V\to\mathbb{P}^3$, which is the composition $\pi\circ\varpi$.

\item We can also describe the~$\mathbb{P}^1$-bundle $\phi$ in Lemma~\ref{lemma:congruences} using the~diagram~\eqref{equation:Prokhorov}.
Namely, a general fiber of this~$\mathbb{P}^1$-bundle is mapped by $\eta\circ\varphi^{-1}\circ\varpi$ to
\begin{itemize}
\item either a line that passes through one of the points $P_1,\ldots,P_6$,
\item or a twisted cubic curve that passes through five points among $P_1,\ldots,P_6$.
\end{itemize}

\item If $\widetilde{\mathscr{S}}$ is the preimage of the quartic surface $\mathscr{S}$ on the threefold $V$ in Lemma~\ref{lemma:congruences},
then $\widetilde{\mathscr{S}}$ is smooth, the~morphism $\pi\circ\varpi$ induces a minimal resolution
$\widetilde{\mathscr{S}}\to\mathscr{S}$, and the $\mathbb{P}^1$-bundle $\phi$
induces a $G$-equivariant double cover $\widetilde{\mathscr{S}}\to Z$, cf. \cite{Skorobogatov}.

\item Using Lemma~\ref{lemma:congruences} one can relate the subgroup $G\subset\mathrm{Aut}(X)$ such that $\mathrm{Cl}^{G}(X)\cong\mathbb{Z}^2$
with a corresponding subgroup in the group $\mathrm{Aut}(Z)$, see \cite{Hosoh,DolgachevIskovskikh}.

\end{enumerate}

Note that $\mathrm{Cl}^{G}(X)\cong\mathbb{Z}^2$ is indeed possible. Let us give two (related) examples.

\begin{example}
\label{example:Heisenberg-class-group-Z-Z}
Let us use all assumptions and notations of Example~\ref{example:class-group-planes-48-50},
and let $G=\mathbb{H}^\rho$, where the~homomorphism $\rho$ is defined by $\rho(A_1)=-1$, $\rho(A_2)=1$, $\rho(A_3)=-1$, $\rho(A_4)=1$.
Then, arguing as in Example~\ref{example:class-group-planes-48-50}, we compute $\mathrm{Cl}^{G}(X)\cong\mathbb{Z}^2$.
\end{example}

\begin{example}
\label{example:80-49}
Let us use all assumptions and notations of Example~\ref{example:Z5}.
Then
$$
\mathrm{Aut}(X)\cong \mumu_2\times\mathrm{Aut}\big(\mathbb{P}^3,\mathscr{S}\big)\cong\mumu_2\times\big(\mumu_2^4\rtimes\mumu_5\big),
$$
and the~group $\mathrm{Aut}(X)$ contains a unique subgroup isomorphic to $\mathrm{Aut}(\mathbb{P}^3,\mathscr{S})\cong\mumu_2^4\rtimes\mumu_5$.
Suppose that $G$ is this subgroup.
It follows from Remark~\ref{remark:Weyl-group}
that $\mathrm{Cl}(X)\otimes\mathbb{Q}$ is~a~faithful seven-dimensional $G$-representation.
Using this, it is easy to see that $\mathrm{Cl}(X)\otimes\mathbb{Q}$~splits
as a~sum of an~irreducible five-dimensional representation and two trivial one-dimensional representations.
Hence, we conclude that $\mathrm{Cl}^{G}(X)\cong\mathbb{Z}^2$.
\end{example}

Before proving Theorem~\ref{theorem:main}, let us prove its two baby cases, which follow from \cite{CheltsovShramov2017,CheltsovSarikyan}.

\begin{proposition}
\label{proposition:80-49}
Suppose $G=\mathrm{Aut}(X)$, and $\mathscr{S}$ is the~quartic surface from Example~\ref{example:Z5}. Then $\mathrm{Cl}^G(X)\cong\mathbb{Z}$ and $X$ is $G$-birationally super-rigid.
\end{proposition}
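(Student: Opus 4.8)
The plan is to prove the two assertions separately, first computing the $G$-invariant class group and then establishing $G$-birational super-rigidity.

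For the class group, recall that $G=\mathrm{Aut}(X)$ contains the Galois involution $\sigma$, which swaps $\Pi^+$ and $\Pi^-$; since $\sigma\in G$ we have $\mathbb{H}=\mathbb{H}^\rho\subset G$ for the trivial $\rho$, so condition (i) of Lemma~\ref{lemma:class-group-planes} applies and gives $\mathrm{Cl}^G(X)\cong\mathbb{Z}$. Alternatively, this follows from Example~\ref{example:80-49}: there the subgroup $\mumu_2^4\rtimes\mumu_5$ already makes $\mathrm{Cl}(X)\otimes\mathbb{Q}$ split as an irreducible five-dimensional representation together with two trivial summands, one of which is $\mathbb{Q}[H]$ and the other spanned by the class of $\Pi^+-\Pi^-$; as $\sigma$ fixes $H$ and negates $\Pi^+-\Pi^-$, the $G$-invariant part of $\mathrm{Cl}(X)\otimes\mathbb{Q}$ is exactly $\mathbb{Q}[H]$, whence $\mathrm{Cl}^G(X)\cong\mathbb{Z}$.

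For super-rigidity I would run the $G$-equivariant Noether--Fano--Iskovskikh method. If $X$ were not $G$-birationally super-rigid, there would be a $G$-invariant mobile linear system $\mathcal{M}\subset|nH|$ such that $(X,\tfrac1n\mathcal{M})$ is not canonical, hence a $G$-invariant center $Z\subsetneq X$ of non-canonical singularities; the task is to show no such $Z$ exists. Taking two general members $D_1,D_2\in\mathcal{M}$, so that $D_1\cdot D_2\sim n^2H^2$ with $H\cdot(D_1\cdot D_2)=n^2H^3=2n^2$, a maximal center along an irreducible curve $C$ forces $\mathrm{mult}_C\mathcal{M}>n$ and therefore $2n^2=H\cdot(D_1\cdot D_2)>n^2(H\cdot C)$, so $H\cdot C=1$; the same estimate shows at most one such curve can occur, and hence by $G$-invariance that curve must itself be $G$-invariant.

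I would then exclude $G$-invariant curves and points. A curve $C$ with $H\cdot C=1$ maps under $\pi$ birationally onto a line $\ell\subset\mathbb{P}^3$ over which the double cover splits, that is, a line tangent to $\mathscr{S}$ at two points --- a member of the $(2,2)$-congruence from the remarks following Lemma~\ref{lemma:congruences} --- or a line through two nodes; $G$-invariance of $C$ would force $\ell$ to be invariant under $\upsilon(G)\supseteq\mathbb{H}$, and I would check that the Heisenberg group $\mathbb{H}$ together with the order-$5$ element of Example~\ref{example:Z5} fixes no such line, ruling out curve centers. For a smooth point $P$ I would restrict to a general $S\in|H|$ through $P$, which is a del Pezzo surface of degree $2$ with $(-K_S)^2=2$: then $\mathrm{mult}_P\mathcal{M}|_S>2n$, so for general $M_1,M_2\in\mathcal{M}|_S$ one gets $\mathrm{mult}_P(M_1\cdot M_2)>4n^2>2n^2=n^2(-K_S)^2$, a contradiction. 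A zero-dimensional $G$-invariant center cannot be a single node either, since $\mathbb{H}\subseteq\upsilon(G)$ acts transitively on the sixteen nodes.

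The main obstacle is the remaining nodal case: ruling out the sixteen nodes of $X$ as simultaneous non-canonical centers. A hyperplane section $S\in|H|$ through a node acquires an ordinary double point, so the clean degree-$2$ surface estimate no longer applies, and one must instead analyze the pair at a three-fold ordinary double point, where the blow-up has exceptional quadric of discrepancy $1$ and the local intersection theory on the small resolution $\varpi\colon V\to X$ of Lemma~\ref{lemma:congruences} is needed to bound $\mathrm{mult}\,\mathcal{M}$. This is precisely the analysis of $G$-invariant mobile systems on nodal del Pezzo threefolds of degree $2$ carried out in \cite{CheltsovShramov2017,CheltsovSarikyan}; invoking it, together with the transitivity of $\mathbb{H}$ on the nodes and the absence of the invariant curves and points established above, excludes all maximal centers, and the Noether--Fano criterion then shows that $X$ is $G$-birationally super-rigid.
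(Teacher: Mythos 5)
The class-group half of your argument is fine and matches the paper: $\sigma$ swaps $\Pi^+$ and $\Pi^-$, so condition (i) of Lemma~\ref{lemma:class-group-planes} applies.

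The super-rigidity half, however, rests on a wrong normalization that invalidates all of your numerical exclusions. Since $-K_X\sim 2H$ and $\mathcal{M}\subset|nH|$, the Noether--Fano criterion (in the form of \cite[Corollary~3.3.3]{CheltsovShramov}) produces non-canonicity of the pair $\big(X,\tfrac{2}{n}\mathcal{M}\big)$, not of $\big(X,\tfrac{1}{n}\mathcal{M}\big)$. Consequently a curve center $Z$ only satisfies $\mathrm{mult}_Z(\mathcal{M})>\tfrac{n}{2}$, and the computation $2n^2=H\cdot M_1\cdot M_2\geqslant \mathrm{mult}_Z^2(\mathcal{M})\,(H\cdot\mathscr{Z})$ yields $H\cdot\mathscr{Z}\leqslant 7$, not $H\cdot C=1$; your restriction to lines therefore misses all the curves of degree $2$ through $7$ that actually have to be excluded. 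The smooth-point step fails for the same reason: the correct Corti--Pukhlikov bound is $(M_1\cdot M_2)_P>4\big(\tfrac{n}{2}\big)^2=n^2$, which is perfectly compatible with $H\cdot M_1\cdot M_2=2n^2$, so there is no contradiction at a single smooth point. Indeed, if your single-point and single-curve exclusions were valid they would prove that \emph{every} quartic double solid is birationally super-rigid with trivial group, contradicting the rationality of $X$ and Example~\ref{example:Z2-non-rigid} (there the center is a curve $C$ with $H\cdot C=2$ and $\mathrm{mult}_C(\mathcal{M})=n$, violating your claimed bound $\mathrm{mult}_C(\mathcal{M})>n$).

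What is genuinely needed, and what the paper's proof uses, is the group-theoretic input from \cite{CheltsovShramov2017} specific to $\upsilon(G)\cong\mumu_2^4\rtimes\mumu_5$: $\mathbb{P}^3$ has no $\upsilon(G)$-orbits of length less than $16$ and no $\upsilon(G)$-invariant curves of degree less than $8$. The curve case is then killed by the degree bound $\leqslant 7$; the smooth-point case is killed by passing to the non-log-canonical pair $\big(X,\tfrac{3}{n}\mathcal{M}\big)$, excluding one-dimensional non-klt components by a degree count, and applying Nadel vanishing to get $4=h^0\big(X,\mathcal{O}_X(H)\big)\geqslant|\mathrm{Orb}_G(Z)|$, contradicting the orbit bound. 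None of this appears in your proposal. Finally, the nodal case is handled in the paper by a self-contained computation (blow up all sixteen nodes, use that $\mathrm{Sing}(\mathscr{S})$ is cut out by cubics so $h^*(3H)-E$ is nef, and derive $6n^2\geqslant 32\epsilon^2$ with $\epsilon>\tfrac{n}{2}$), whereas you defer it to references that do not actually treat mobile systems on nodal del Pezzo threefolds of degree $2$.
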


\begin{proof}
Since $\sigma\in G$, we get $\mathrm{Cl}^G(X)\cong\mathbb{Z}$. Let us show that $X$ is $G$-birationally super-rigid.

Note that the~$\upsilon(G)$-equivariant birational geometry of the~projective space $\mathbb{P}^3$ has been studied~in~\cite{CheltsovShramov2017}.
In~particular, we know from \cite[Corollary~4.7]{CheltsovShramov2017} and \cite[Theorem 4.16]{CheltsovShramov2017}~that
\begin{itemize}
\item $\mathbb{P}^3$ does not contain $\upsilon(G)$-orbits of length less that $16$,
\item $\mathbb{P}^3$ does not contain $\upsilon(G)$-invariant curves of degree less than $8$.
\end{itemize}

Let $\mathcal{M}$ be a $G$-invariant linear system on $X$ such that $\mathcal{M}$ has no fixed components.
Choose a~positive integer $n$ such that $\mathcal{M}\subset |nH|$.
Then, by \cite[Corollary~3.3.3]{CheltsovShramov}, to prove that the~threefold $X$ is $G$-birationally super-rigid it is enough to show that $(X,\frac{2}{n}\mathcal{M})$ has canonical singularities.
Suppose that the~singularities of this log pair are not canonical.

Let $Z$ be a center of non-canonical singularities of the~pair $(X,\frac{2}{n}\mathcal{M})$ that has the~largest dimension.
Since the~linear system $\mathcal{M}$ does not have fixed components,
we conclude that either $Z$ is an irreducible curve, or $Z$ is a point.
In both cases, we have
$$
\mathrm{mult}_Z\big(\mathcal{M}\big)>\frac{n}{2}
$$
by \cite[Theorem~4.5]{KoMo98}.

Let $M_1$ and $M_2$ be general surfaces in $\mathcal{M}$. If $Z$ is a curve, then
$$
M_1\cdot M_2=(M_1\cdot M_2)_Z\mathscr{Z}+\Delta
$$
where $\mathscr{Z}$ is the~$G$-irreducible curve in $X$ whose irreducible component is the~curve $Z$,
and~$\Delta$ is an effective one-cycle whose support does not contain $\mathscr{Z}$, which gives
\begin{multline*}
2n^2=n^2H^2=H\cdot M_1\cdot M_2=\big(M_1\cdot M_2\big)_Z\mathscr{Z}+\Delta=\\
=\big(M_1\cdot M_2\big)_Z\big(H\cdot \mathscr{Z}\big)+H\cdot\Delta\geqslant\big(M_1\cdot M_2\big)_Z(H\cdot \mathscr{Z})\geqslant\\
\geqslant\mathrm{mult}_Z^2\big(\mathcal{M}\big)\big(H\cdot \mathscr{Z}\big)>\frac{n^2}{4}\big(H\cdot \mathscr{Z}\big)\geqslant \frac{n^2}{4}\mathrm{deg}\big(\pi(\mathscr{Z})\big),
\end{multline*}
so $\pi(\mathscr{Z})$ is a $\upsilon(G)$-invariant curve of degree $\leqslant 7$,
which contradicts \cite[Theorem 4.16]{CheltsovShramov2017}.

We see that $Z$ is a point, and $(X,\frac{2}{n}\mathcal{M})$ is canonical away from finitely many points.

We claim that $Z\not\in\mathrm{Sing}(X)$.
Indeed, suppose  $Z$ is a singular point of the~threefold~$X$.
Let $h\colon\overline{X}\to X$ be the~blow up of the~locus $\mathrm{Sing}(X)$,
let $E_1,\ldots,E_{16}$ be the~$h$-exceptional surfaces,
let $\overline{M}_1$ and $\overline{M}_1$ be the~proper transforms on $\overline{X}$ of the~surfaces $M_1$ and $M_2$, respectively.
Write $E=E_1+\cdots+E_{16}$.
Since $\mathrm{Sing}(X)$ is a $G$-orbit, we have
$$
\overline{M}_1\sim\overline{M}_2\sim h^{*}(H)-\epsilon E
$$
for some integer $\epsilon\geqslant 0$. Using \cite[Theorem~3.10]{Co00} or \cite[Theorem~1.7.20]{CheltsovUMN}, we~get $\epsilon>\frac{n}{2}$.
On the~other hand, the~linear system $|h^{*}(3H)-E|$ is not empty and does not have
base curves away from the~locus $E_1\cup E_2\cup\cdots\cup E_{16}$, because $\mathrm{Sing}(\mathscr{S})$ is cut out by cubic surfaces in $\mathbb{P}^3$.
In particular, the~divisor $h^{*}(3H)-E$ is nef, so
$$
0\leqslant \big(h^{*}(3H)-E\big)\cdot \overline{M}_1\cdot\overline{M}_2=\big(h^{*}(3H)-E\big)\cdot\big(h^{*}(3nH)-\epsilon  E\big)^2=6n^2-32\epsilon^2,
$$
which is impossible, since $\epsilon >\frac{n}{2}$. So, we see that $Z$ is a smooth point of the~threefold~$X$.

Then the~pair $(X,\frac{3}{n}\mathcal{M})$ is not log canonical at $Z$.
Let $\mu$ be the~largest rational number such that the~log pair $(X,\mu\mathcal{M})$ is log canonical.
Then $\mu<\frac{3}{n}$ and
$$
\mathrm{Orb}_G(Z)\subseteq\mathrm{Nklt}\big(X,\mu\mathcal{M}\big).
$$
Observe that $\mathrm{Nklt}(X,\mu\mathcal{M})$ is at most one-dimensional, since $\mathcal{M}$ has no fixed components.
Moreover, this locus is $G$-invariant, because $\mathcal{M}$ is $G$-invariant.

We claim that $\mathrm{Nklt}(X,\mu\mathcal{M})$ does not contain curves.
Indeed, suppose this is not true. Then $\mathrm{Nklt}(X,\mu\mathcal{M})$ contains a $G$-irreducible curve~$C$.
We write $M_1\cdot M_2=mC+\Omega$, where $m$ is a non-negative integer, and $\Omega$ is an effective one-cycle whose support does not contain the~curve $C$.
Then it follows from \cite[Theorem~3.1]{Co00} that
$$
m\geqslant \frac{4}{\mu^2}>\frac{4n^2}{9}.
$$
Therefore, we have
$$
2n^2=n^2H^3=H\cdot M_1\cdot M_2=m\big(H\cdot C\big)+H\cdot\Omega\geqslant m\big(H\cdot C\big)>\frac{4n^2}{9}\big(H\cdot C\big),
$$
which implies that $H\cdot C\leqslant 4$. Then $\pi(C)$ is a $\upsilon(G)$-invariant curve in $\mathbb{P}^3$ of degree $\leqslant 4$,
which contradicts \cite[Theorem 4.16]{CheltsovShramov2017}.
Thus, the~locus $\mathrm{Nklt}(X,\mu\mathcal{M})$ contains no curves.

Let $\mathcal{I}$ be the~multiplier ideal sheaf of the~pair $(X,\mu\mathcal{M})$,
and let $\mathcal{L}$ be the~corresponding subscheme in $X$.
Then $\mathcal{L}$ is a zero-dimensional (reduced) subscheme such that
$$
\mathrm{Orb}_G\big(Z\big)\subseteq\mathrm{Supp}\big(\mathcal{L}\big)=\mathrm{Nklt}\big(X,\mu\mathcal{M}\big).
$$
Applying Nadel's vanishing \cite[Theorem~9.4.8]{Lazarsfeld}, we get $h^1(X,\mathcal{I}\otimes\mathcal{O}_{X}(H))=0$.
This gives
$$
4=h^0\big(X,\mathcal{O}_{X}(H)\big)\geqslant h^0\big(\mathcal{O}_{\mathcal{L}}\otimes\mathcal{O}_{X}(H)\big)=h^0\big(\mathcal{O}_{\mathcal{L}}\big)\geqslant|\mathrm{Orb}_G(Z)|.
$$
In particular, we conclude that the~length of the~$\upsilon(G)$-orbit of the~point $\pi(Z)$ is at most~$4$, which is impossible by \cite[Corollary~4.7]{CheltsovShramov2017}.
\end{proof}

\begin{proposition}
\label{proposition:48-50}
Suppose  that $\mathscr{S}$ is the~surface from Example~\ref{example:recover-curve-from-quartic},
and $G$ is the~subgroup described in Example~\ref{example:48-50-class-group}.
Then $\mathrm{Cl}^G(X)\cong\mathbb{Z}$ and $X$ is $G$-birationally super-rigid.
\end{proposition}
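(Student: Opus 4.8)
The plan is to follow the strategy of Proposition~\ref{proposition:80-49} almost verbatim, the only genuinely new difficulty being that the group $\upsilon(G)\cong\mumu_2^4\rtimes\mumu_3$ admits small invariant loci in $\mathbb{P}^3$ that are absent for the larger group $\mumu_2^4\rtimes\mumu_5$ of Example~\ref{example:Z5}. The equality $\mathrm{Cl}^G(X)\cong\mathbb{Z}$ has already been established in Example~\ref{example:48-50-class-group} (note that here $\sigma\notin G$, so we cannot simply invoke $\sigma\in G$ as in Proposition~\ref{proposition:80-49}). In particular $X$ is a $G$-Mori fibre space, so it makes sense to ask for $G$-birational super-rigidity. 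As before, I would take a $G$-invariant mobile linear system $\mathcal{M}\subset|nH|$ and, by \cite[Corollary~3.3.3]{CheltsovShramov}, reduce the statement to showing that the log pair $(X,\frac{2}{n}\mathcal{M})$ is canonical. Assuming the contrary, I let $Z$ be a maximal-dimensional center of non-canonical singularities; it is either a $G$-irreducible curve or a point, and in either case $\mathrm{mult}_Z(\mathcal{M})>\frac{n}{2}$ by \cite[Theorem~4.5]{KoMo98}.

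Next I would run the three exclusion steps. For the curve case, writing $M_1\cdot M_2=(M_1\cdot M_2)_Z\mathscr{Z}+\Delta$ for general $M_1,M_2\in\mathcal{M}$ and the $G$-irreducible curve $\mathscr{Z}\supseteq Z$, the computation of Proposition~\ref{proposition:80-49} gives $2n^2>\frac{n^2}{4}(H\bullet\mathscr{Z})$, hence $H\bullet\mathscr{Z}\leqslant 7$. One must then check that no such $G$-orbit of curves exists. If $\mathscr{Z}$ lies over a $\upsilon(G)$-invariant curve $D\subset\mathbb{P}^3$ with $D\not\subset\mathscr{S}$, then $\pi^{-1}(D)$ is irreducible and $H\bullet\mathscr{Z}=2\deg(D)$, whereas if $D\subset\mathscr{S}$ then $\mathscr{Z}$ is one of the two components of $\pi^{-1}(D)$ and $H\bullet\mathscr{Z}=\deg(D)$. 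For instance the union of the six coordinate lines is $\upsilon(G)$-invariant of degree $6$, but it does not lie on $\mathscr{S}$, so it contributes $H\bullet\mathscr{Z}=12$ and is harmless. The required statement — that $\mathbb{P}^3$ carries no $\upsilon(G)$-invariant curve forcing $H\bullet\mathscr{Z}\leqslant 7$, i.e. none contained in $\mathscr{S}$ of degree at most $7$ and none off $\mathscr{S}$ of degree at most $3$ — is exactly the curve classification for the group $\mumu_2^4\rtimes\mumu_3$ carried out in \cite{CheltsovSarikyan}. The case $Z\in\mathrm{Sing}(X)$ is then handled verbatim as in Proposition~\ref{proposition:80-49}: blowing up $\mathrm{Sing}(X)$, using that $h^{*}(3H)-E$ is nef and that $\epsilon>\frac{n}{2}$ by \cite[Theorem~3.10]{Co00}, one obtains $0\leqslant 6n^2-32\epsilon^2$, a contradiction.

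It remains to treat the case when $Z$ is a smooth point of $X$. Passing to the log canonical threshold $\mu<\frac{3}{n}$, excluding curves in $\mathrm{Nklt}(X,\mu\mathcal{M})$ by the same degree estimate as above, and applying Nadel vanishing \cite[Theorem~9.4.8]{Lazarsfeld} with the twist $\mathcal{O}_X(H)$, I obtain $|\mathrm{Orb}_G(Z)|\leqslant h^0(X,\mathcal{O}_X(H))=4$. This is precisely where the argument departs from Proposition~\ref{proposition:80-49}: the group $\upsilon(G)$ does possess an orbit of length $4$, namely the four coordinate points $[1:0:0:0]$, $[0:1:0:0]$, $[0:0:1:0]$, $[0:0:0:1]$ (permuted by $A_3$, $A_4$ and by the order-three generator, and none lying on $\mathscr{S}$), whose two $\sigma$-conjugate preimages each form a $G$-orbit of length $4$ in $X$. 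Thus the Nadel estimate alone does not conclude, and ruling out these two orbits as non-canonical centers is the main obstacle.

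To dispose of this last case I would argue locally at a coordinate point $Z$. Since $(X,\frac{2}{n}\mathcal{M})$ is not canonical at the smooth point $Z$, the refined local intersection inequality of \cite{Co00} gives $\mathrm{mult}_Z(M_1\cdot M_2)>n^2$, and by $G$-invariance the same holds at all four points of $\mathrm{Orb}_G(Z)$, so the total local multiplicity of the cycle $M_1\cdot M_2$ along the orbit exceeds $4n^2$. The point is to confront this with $H\bullet(M_1\cdot M_2)=2n^2$: one needs the bound $\sum_{Z'\in\mathrm{Orb}_G(Z)}\mathrm{mult}_{Z'}(M_1\cdot M_2)\leqslant H\bullet(M_1\cdot M_2)$, which would immediately yield $4n^2<2n^2$. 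This bound is the delicate part, since it requires controlling the components of $M_1\cdot M_2$ passing through the coordinate points — using the degree-two structure of $\pi$ over the coordinate edges and the action of the stabilizer $\langle A_1,A_2\rangle\rtimes\mumu_3$ on the tangent space $T_ZX$. I expect this verification, which is the genuinely new ingredient compared with the $\mumu_5$-case, to be supplied by the corresponding analysis in \cite{CheltsovSarikyan}, from which the $G$-birational super-rigidity of $X$ then follows.
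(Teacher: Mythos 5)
Your overall architecture coincides with the paper's: reduce via \cite[Corollary~3.3.3]{CheltsovShramov} to showing $(X,\frac{2}{n}\mathcal{M})$ is canonical, exclude curve centers by the degree estimate $H\cdot\mathscr{Z}\leqslant 7$ together with the classification of $\upsilon(G)$-invariant curves from \cite{CheltsovSarikyan}, exclude singular points of $X$ exactly as in Proposition~\ref{proposition:80-49}, and then use Nadel vanishing to force $|\mathrm{Orb}_G(Z)|=4$ with $\pi(Z)$ in one of the three orbits $\Sigma_4,\Sigma_4^\prime,\Sigma_4^{\prime\prime}$. Two remarks on the curve step: your assertion that $\pi^{-1}(D)$ is irreducible whenever $D\not\subset\mathscr{S}$ is not automatic (a line tangent to $\mathscr{S}$ at two points has split preimage, each component of $H$-degree $1$, and such lines are exactly the fibres of the congruence in Lemma~\ref{lemma:congruences}); the paper closes this by checking explicitly that every component of each of the nine low-degree invariant curves meets $\mathscr{S}$ transversally in $4$ points, so its preimage is an irreducible elliptic curve and every $G$-invariant curve $C\subset X$ satisfies $H\cdot C\geqslant 8$. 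That verification is needed, not just the classification.

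The genuine gap is your final step. The inequality
$$
\sum_{Z^\prime\in\mathrm{Orb}_G(Z)}\mathrm{mult}_{Z^\prime}\big(M_1\cdot M_2\big)\leqslant H\bullet\big(M_1\cdot M_2\big)
$$
is false in general and cannot be rescued: a single irreducible component of $H$-degree $2$ (for instance the preimage of a line through two of the coordinate points, or an elliptic curve lying over a conic through all four) passing through the whole orbit already contributes $4$ to the left side and only $2$ to the right, and $M_1\cdot M_2$ may well contain such components with large coefficients. No analysis of the stabilizer action on $T_ZX$ will produce this bound. The paper's actual argument avoids it entirely: let $\mathcal{S}\subset|3H|$ be the linear system of surfaces singular at every point of $\mathrm{Orb}_G(P)$; its base locus contains no curves, so a general $S\in\mathcal{S}$ contains no component of the cycle $M_1\cdot M_2$, and then
$$
6n^2=S\cdot M_1\cdot M_2\geqslant\sum_{O\in\mathrm{Orb}_G(P)}2\big(M_1\cdot M_2\big)_O=8\big(M_1\cdot M_2\big)_P>8n^2
$$
by \cite[Corollary 3.4]{Co00}, a contradiction. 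The factor $2$ coming from the singularity of $S$ at each orbit point is exactly what turns the count in your favour; intersecting with $H$ (or any smooth member of $|mH|$) does not. So the concluding exclusion must be rebuilt along these lines rather than deferred to \cite{CheltsovSarikyan}.
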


\begin{proof}
Recall from Example~\ref{example:48-50-class-group} that $G\cong\mumu_2^4\rtimes\mumu_3$ and $\mathrm{Cl}^G(X)\cong\mathbb{Z}$.

The $\upsilon(G)$-equivariant geometry of the~projective space $\mathbb{P}^3$ has been studied~in~\cite{CheltsovSarikyan}.
In~particular, we know from \cite{CheltsovSarikyan} that $\mathbb{P}^3$ does not contain $\upsilon(G)$-orbits of length $1$, $2$ or $3$,
and the~only $\upsilon(G)$-orbits in $\mathbb{P}^3$ of length~$4$ are
\begin{align*}
\Sigma_4&=\big\{[1:0:0:0],[0:1:0:0],[0:0:1:0],[0:0:0:1]\big\},\\
\Sigma_4^\prime&=\big\{[1:1:1:-1], [1:1:-1:1], [1:-1:1:1], [-1:1:1:1]\big\},\\
\Sigma_4^{\prime\prime}&=\big\{[1:1:1:1], [1:1:-1:-1], [1:-1:-1:1], [-1:-1:1:1]\big\}
\end{align*}
We also know from \cite{CheltsovSarikyan} the~classification of $\upsilon(G)$-invariant curves in $\mathbb{P}^3$ of degree at most~$7$.
Namely, let $\mathcal{L}_4$, $\mathcal{L}_4^\prime$, $\mathcal{L}_4^{\prime\prime}$,
$\mathcal{L}_4^{\prime\prime\prime}$, $\mathcal{L}_6$, $\mathcal{L}_6^\prime$, $\mathcal{L}_6^{\prime\prime}$,
$\mathcal{L}_6^{\prime\prime\prime}$, $\mathcal{L}_6^{\prime\prime\prime\prime}$ be ~$\upsilon(G)$-irreducible curves in $\mathbb{P}^3$
whose irreducible components are the~lines
\begin{align*}
\big\{2x_0+(1+\sqrt{3}i)x_2-(1-\sqrt{3}i)x_3=2x_1+(1-\sqrt{3}i)x_2+(1+\sqrt{3}i)x_3=0\big\},\\
\big\{2x_0+(1-\sqrt{3}i)x_2-(1+\sqrt{3}i)x_3=2x_1+(1+\sqrt{3}i)x_2+(1-\sqrt{3}i)x_3=0\big\},\\
\big\{2x_0-(1-\sqrt{3}i)x_2+(1+\sqrt{3}i)x_3=2x_1+(1+\sqrt{3}i)x_2+(1-\sqrt{3}i)x_3=0\big\},\\
\big\{2x_0-(1+\sqrt{3}i)x_2+(1-\sqrt{3}i)x_3=2x_1+(1-\sqrt{3}i)x_2+(1+\sqrt{3}i)x_3=0\big\},\\
\big\{x_0=x_1=0\big\},\\
\big\{x_0+x_1=x_2-x_3=0\big\},\\
\big\{x_0+x_1=x_2+x_3=0\big\},\\
\big\{x_0+ix_2=x_1+ix_3=0\big\},\\
\big\{x_0+ix_3=x_1+ix_2=0\big\},
\end{align*}
respectively. Then the~curves $\mathcal{L}_4$, $\mathcal{L}_4^\prime$, $\mathcal{L}_4^{\prime\prime}$, $\mathcal{L}_4^{\prime\prime\prime}$,
$\mathcal{L}_6$, $\mathcal{L}_6^\prime$, $\mathcal{L}_6^{\prime\prime}$,
$\mathcal{L}_6^{\prime\prime\prime}$, $\mathcal{L}_6^{\prime\prime\prime\prime}$ are unions of $4$, $4$, $4$, $4$, $6$, $6$, $6$, $6$, $6$ lines, respectively.
Moreover, it follows from \cite{CheltsovSarikyan} that
\begin{center}
$\mathcal{L}_4$, $\mathcal{L}_4^\prime$, $\mathcal{L}_4^{\prime\prime}$, $\mathcal{L}_4^{\prime\prime\prime}$,
$\mathcal{L}_6$, $\mathcal{L}_6^\prime$, $\mathcal{L}_6^{\prime\prime}$,
$\mathcal{L}_6^{\prime\prime\prime}$, $\mathcal{L}_6^{\prime\prime\prime\prime}$
\end{center}
are the~only $\upsilon(G)$-invariant curves in $\mathbb{P}^3$ of degree at most $7$.

Now, using the~defining equation of the~surface $\mathscr{S}$, one can check that any irreducible component of any curve among
$\mathcal{L}_4$, $\mathcal{L}_4^\prime$, $\mathcal{L}_4^{\prime\prime}$, $\mathcal{L}_4^{\prime\prime\prime}$,
$\mathcal{L}_6$, $\mathcal{L}_6^\prime$, $\mathcal{L}_6^{\prime\prime}$,
$\mathcal{L}_6^{\prime\prime\prime}$, $\mathcal{L}_6^{\prime\prime\prime\prime}$ intersects the~quartic surface  $\mathscr{S}$
transversally by $4$ distinct points, so that its preimage in $X$ via the~double cover $\pi$ is a smooth elliptic curve.
Thus, if $C$ is a $G$-invariant curve in $X$, then $H\cdot C\geqslant 8$.

Suppose that $X$ is not $G$-birationally super-rigid.
It follows from \cite[Corollary~3.3.3]{CheltsovShramov} that there are a~positive integer $n$ and a $G$-invariant linear subsystem $\mathcal{M}\subset |nH|$
such that~$\mathcal{M}$ does not have fixed components, but the~log pair  $(X,\frac{2}{n}\mathcal{M})$ is not canonical.

Arguing as in the~proof of Proposition~\ref{proposition:80-49},
we see that the~log pair $(X,\frac{2}{n}\mathcal{M})$ is canonical away from finitely many points.
Let $P$ be a point in $X$ that is a center of non-canonical singularities of the~log pair $(X,\frac{2}{n}\mathcal{M})$.
Now, arguing as in the~proof of Proposition~\ref{proposition:80-49}~again,
we see that $P$ is a smooth point of the~threefold $X$.

Then the~log pair $(X,\frac{3}{n}\mathcal{M})$ is not log canonical at $P$.
Let $\mu$ be the~largest rational number such that  $(X,\mu\mathcal{M})$ is log canonical.
Then $\mu<\frac{3}{n}$ and
$$
\mathrm{Orb}_G(P)\subseteq\mathrm{Nklt}\big(X,\mu\mathcal{M}\big).
$$
Observe that $\mathrm{Nklt}(X,\mu\mathcal{M})$ is at most one-dimensional, since $\mathcal{M}$ has no fixed components.
Moreover, this locus is $G$-invariant, because $\mathcal{M}$ is $G$-invariant.
Furthermore, arguing as in the~proof of Proposition~\ref{proposition:80-49},
we see that
$$
\mathrm{dim}\Big(\mathrm{Nklt}\big(X,\mu\mathcal{M}\big)\Big)=0.
$$

Let $\mathcal{I}$ be the~multiplier ideal sheaf of the~pair $(X,\mu\mathcal{M})$,
and let $\mathcal{L}$ be the~corresponding subscheme in $X$.
Then $\mathcal{L}$ is a zero-dimensional (reduced) subscheme such that
$$
\mathrm{Orb}_G\big(P\big)\subseteq\mathrm{Supp}\big(\mathcal{L}\big)=\mathrm{Nklt}\big(X,\mu\mathcal{M}\big).
$$
On the~other hand, applying Nadel's vanishing theorem \cite[Theorem~9.4.8]{Lazarsfeld}, we get
$$
h^1\big(X,\mathcal{I}\otimes\mathcal{O}_{X}(H)\big)=0.
$$
This gives
$$
4=h^0\big(X,\mathcal{O}_{X}(H)\big)\geqslant h^0\big(\mathcal{O}_{\mathcal{L}}\otimes\mathcal{O}_{X}(H)\big)=h^0\big(\mathcal{O}_{\mathcal{L}}\big)\geqslant|\mathrm{Orb}_G(P)|.
$$
Thus, we conclude that $|\mathrm{Orb}_G(P)|=4$ and $\pi(P)\in\Sigma_4\cup\Sigma_4^\prime\cup\Sigma_4^{\prime\prime}$.

Let $M_1$ and $M_2$ be two general surfaces in $\mathcal{M}$. Using \cite{Pukhlikov} or \cite[Corollary 3.4]{Co00}, we get
\begin{equation}
\label{equation:Corti}
\big(M_1\cdot M_2\big)_P>n^2.
\end{equation}
Let $\mathcal{S}$ be a linear subsystem in $|3H|$ that consists of all surfaces that are singular at every point of the~$G$-orbit $\mathrm{Orb}_G(P)$.
Then its base locus does not contain curves,
which implies that there is a surface $S\in\mathcal{S}$ that does not contain components of the~cycle~$M_1\cdot M_2$.
Thus, using \eqref{equation:Corti} and $\mathrm{mult}_P(S)\geqslant 2$, we get
$$
6n^2=S\cdot M_1\cdot M_2\geqslant\sum_{O\in \mathrm{Orb}_G(P)}2\big(M_1\cdot M_2\big)_O=2|\mathrm{Orb}_G(P)|\big(M_1\cdot M_2\big)_P=8\big(M_1\cdot M_2\big)_P>8n^2,
$$
which is absurd. This completes the~proof of Proposition~\ref{proposition:48-50}.
\end{proof}

In the~remaining part of the~paper, we prove Theorem~\ref{theorem:main},
and consider one application.
Let us recall from \cite{Hudson,Nieto,Gonzalez-Dorrego,Eklund,Antonsen} basic facts about the~$\mathbb{H}$-equivariant geometry of $\mathbb{P}^3$.~Set
\begin{align*}
\mathcal{Q}_1&=\big\{x_0^2+x_1^2+x_2^2+x_3^2=0\big\},\\
\mathcal{Q}_2&=\big\{x_0^2+x_1^2=x_2^2+x_3^2\big\},\\
\mathcal{Q}_3&=\big\{x_0^2-x_1^2=x_2^2-x_3^2\big\},\\
\mathcal{Q}_4&=\big\{x_0^2-x_1^2=x_3^2-x_2^2\big\},\\
\mathcal{Q}_5&=\big\{x_0x_2+x_1x_3=0\big\},\\
\mathcal{Q}_6&=\big\{x_0x_3+x_1x_2=0\big\}, \\
\mathcal{Q}_7&=\big\{x_0x_1+x_2x_3=0\big\},\\
\mathcal{Q}_8&=\big\{x_0x_2=x_1x_3\big\}, \\
\mathcal{Q}_9&=\big\{x_0x_3=x_1x_2\big\}, \\
\mathcal{Q}_{10}&=\big\{x_0x_1=x_2x_3\big\}.
\end{align*}
Then $\mathcal{Q}_1$, $\mathcal{Q}_2$, $\mathcal{Q}_3$, $\mathcal{Q}_4$, $\mathcal{Q}_5$, $\mathcal{Q}_6$, $\mathcal{Q}_7$,
$\mathcal{Q}_8$, $\mathcal{Q}_9$, $\mathcal{Q}_{10}$ are all $\mathbb{H}$-invariant quadric surfaces in~$\mathbb{P}^3$.
These quadrics are smooth, and $\mathbb{H}\cong\mumu_2^4$ acts \emph{naturally} on each quadric $\mathcal{Q}_i\cong\mathbb{P}^1\times\mathbb{P}^1$.

For a non-trivial element $g\in\mathbb{H}$,
the locus of its fixed points in $\mathbb{P}^3$ consists of two skew lines, which we will denote by $L_g$ and $L_g^\prime$.
For two non-trivial elements $g\ne h$ in $\mathbb{H}$, one~has
$$
\big\{L_g,L_g^\prime\big\}\cap\big\{L_h,L_h^\prime\big\}=\varnothing.
$$
In total, this gives $30$ lines $\ell_{1},\ldots,\ell_{30}$, whose equations are listed in the~following table:
\begin{center}
\renewcommand\arraystretch{1.5}
\begin{tabular}{|c||c|}
\hline
$\quad$ $\ell_1=\big\{x_0=x_1=0\big\}$ $\quad$  &$\quad$ $\ell_2=\big\{x_2=x_3=0\big\}$ $\quad$  \\
\hline
$\quad$ $\ell_3=\big\{x_0=x_2=0\big\}$ $\quad$ & $\quad$ $\ell_4=\big\{x_1=x_3=0\big\}$ $\quad$ \\
\hline
$\quad$ $\ell_5=\big\{x_0=x_3=0\big\}$ $\quad$ & $\quad$ $\ell_6=\big\{x_1=x_2=0\big\}$ $\quad$ \\
\hline
$\quad$ $\ell_7=\big\{x_0+x_1=x_2+x_3=0\big\}$ $\quad$ & $\quad$ $\ell_8=\big\{x_0-x_1=x_2-x_3=0\big\}$ $\quad$ \\
\hline
$\quad$ $\ell_9=\big\{x_0+x_2=x_1+x_3=0\big\}$ $\quad$ & $\quad$ $\ell_{10}=\big\{x_0-x_2=x_1-x_3=0\big\}$ $\quad$ \\
\hline
$\quad$ $\ell_{11}=\big\{x_0+x_3=x_1+x_2=0\big\}$ $\quad$ & $\quad$ $\ell_{12}=\big\{x_0-x_3=x_1-x_2=0\big\}$ $\quad$ \\
\hline
$\quad$ $\ell_{13}=\big\{x_0+x_1=x_2-x_3=0\big\}$ $\quad$ & $\quad$ $\ell_{14}=\big\{x_0-x_1=x_2+x_3=0\big\}$ $\quad$ \\
\hline
$\quad$ $\ell_{15}=\big\{x_0+x_2=x_1-x_3=0\big\}$ $\quad$ & $\quad$ $\ell_{16}=\big\{x_0-x_2=x_1+x_3=0\big\}$ $\quad$ \\
\hline
$\quad$ $\ell_{17}=\big\{x_0+x_3=x_1-x_2=0\big\}$ $\quad$ &$\quad$ $\ell_{18}=\big\{x_0-x_3=x_1+x_2=0\big\}$ $\quad$  \\
\hline
$\quad$ $\ell_{19}=\big\{x_0+ix_1=x_2+ix_3=0\big\}$ $\quad$ &$\quad$  $\ell_{20}=\big\{x_0-ix_1=x_2-ix_3=0\big\}$ $\quad$ \\
\hline
$\quad$ $\ell_{21}=\big\{x_0+ix_2=x_1+ix_3=0\big\}$ $\quad$ &$\quad$  $\ell_{22}=\big\{x_0-ix_2=x_1-ix_3=0\big\}$ $\quad$ \\
\hline
$\quad$ $\ell_{23}=\big\{x_0+ix_3=x_1+ix_2=0\big\}$ $\quad$ &$\quad$  $\ell_{24}=\big\{x_0-ix_3=x_1-ix_2=0\big\}$ $\quad$ \\
\hline
$\quad$ $\ell_{25}=\big\{x_0-ix_1=x_2+ix_3=0\big\}$ $\quad$ &$\quad$  $\ell_{26}=\big\{x_0+ix_1=x_2-ix_3=0\big\}$ $\quad$ \\
\hline
$\quad$ $\ell_{27}=\big\{x_0+ix_2=x_1-ix_3=0\big\}$ $\quad$ &$\quad$  $\ell_{28}=\big\{x_0-ix_2=x_1+ix_3=0\big\}$ $\quad$ \\
\hline
$\quad$ $\ell_{29}=\big\{x_0+ix_3=x_1-ix_2=0\big\}$ $\quad$ &$\quad$  $\ell_{30}=\big\{x_0-ix_3=x_1+ix_2=0\big\}$ $\quad$ \\
\hline
\end{tabular}
\end{center}

Note that $\ell_{1},\ldots,\ell_{30}$ are irreducible components of the~curves $\mathcal{L}_6$, $\mathcal{L}_6^\prime$, $\mathcal{L}_6^{\prime\prime}$,
$\mathcal{L}_6^{\prime\prime\prime}$, $\mathcal{L}_6^{\prime\prime\prime\prime}$ which have been introduced in the~proof of Proposition~\ref{proposition:48-50}.
One can check that
\begin{itemize}
\item for every $k\in\{1,\ldots,15\}$, the~curve $\ell_{2k-1}+\ell_{2k}$ is $\mathbb{H}$-irreducible,
\item each line among $\ell_{1},\ldots,\ell_{30}$ is contained in $4$ quadrics among $\mathcal{Q}_1,\ldots,\mathcal{Q}_{10}$,
\item each quadric among $\mathcal{Q}_1,\ldots,\mathcal{Q}_{10}$ contains $12$ lines among $\ell_{1},\ldots,\ell_{30}$,
\item every two quadrics among $\mathcal{Q}_1,\ldots,\mathcal{Q}_{10}$ intersect by $4$ lines among $\ell_{1},\ldots,\ell_{30}$.
\end{itemize}
The incidence relation between $\ell_{1},\ldots,\ell_{30}$ and  $\mathcal{Q}_1,\ldots,\mathcal{Q}_{10}$ is presented in Figure~\ref{figure:10-quadrics}.

\begin{figure}[h!]
\caption{Ten $\mathbb{H}$-invariant quadrics in $\mathbb{P}^3$ and thirty lines in them.}
\label{figure:10-quadrics}
\begin{center}
\renewcommand\arraystretch{1.4}
\begin{tabular}{|c||c|c|c|c|c|c|c|c|c|c|}
\hline
$\quad$ $\enskip$  & $\enskip$ $\mathcal{Q}_1$ $\enskip$  & $\enskip$ $\mathcal{Q}_2$ $\enskip$ & $\enskip$ $\mathcal{Q}_3$ $\enskip$  & $\enskip$ $\mathcal{Q}_4$ $\enskip$ & $\enskip$ $\mathcal{Q}_5$ $\enskip$ & $\enskip$ $\mathcal{Q}_6$ $\enskip$ & $\enskip$ $\mathcal{Q}_7$ $\enskip$ & $\enskip$  $\mathcal{Q}_{8}$ $\enskip$ & $\enskip$  $\mathcal{Q}_9$ $\enskip$ & $\enskip$ $\mathcal{Q}_{10}$ $\enskip$\\
\hline\hline
$\ell_1$                         & $-$  & $-$  & $-$  & $-$  & $+$  & $+$  & $-$  & $+$  & $+$  & $-$ \\
\hline
$\ell_2$                         & $-$  & $-$  & $-$  & $-$  & $+$  & $+$  & $-$  & $+$  & $+$  & $-$ \\
\hline
$\ell_3$                         & $-$  & $-$  & $-$  & $-$  & $-$  & $+$  & $+$  & $-$  & $+$  & $+$ \\
\hline
$\ell_4$                         & $-$  & $-$  & $-$  & $-$  & $-$  & $+$  & $+$  & $-$  & $+$  & $+$\\
\hline
$\ell_5$                         & $-$  & $-$  & $-$  & $-$  & $+$  & $-$  & $+$  & $+$  & $-$  & $+$ \\
\hline
$\ell_6$                         & $-$  & $-$  & $-$  & $-$  & $+$  & $-$  & $+$  & $+$  & $-$  & $+$\\
\hline
$\ell_7$                         & $-$  & $-$  & $+$  & $+$  & $-$  & $-$  & $-$  & $+$  & $+$  & $-$ \\
\hline
$\ell_8$                         & $-$  & $-$  & $+$  & $+$  & $-$  & $-$  & $-$  & $+$  & $+$  & $-$ \\
\hline
$\ell_9$                         & $-$  & $+$  & $+$  & $-$  & $-$  & $-$  & $-$  & $-$  & $+$  & $+$ \\
\hline
$\ell_{10}$                      & $-$  & $+$  & $+$  & $-$  & $-$  & $-$  & $-$  & $-$  & $+$  & $+$ \\
\hline
$\ell_{11}$                      & $-$  & $+$  & $-$  & $+$  & $-$  & $-$  & $-$  & $+$  & $-$  &  $+$\\
\hline
$\ell_{12}$                      & $-$  & $+$  & $-$  & $+$  & $-$  & $-$  & $-$  & $+$  & $-$  & $+$ \\
\hline
$\ell_{13}$                      & $-$  & $-$  & $+$  & $+$  & $+$  & $+$  & $-$  & $-$  & $-$  & $-$ \\
\hline
$\ell_{14}$                      & $-$  & $-$  & $+$  & $+$  & $+$  & $+$  & $-$  & $-$  & $-$  & $-$ \\
\hline
$\ell_{15}$                      & $-$  & $+$  & $+$  & $-$  & $-$  & $+$  & $+$  & $-$  & $-$  &  $-$\\
\hline
$\ell_{16}$                      & $-$  & $+$  & $+$  & $-$  & $-$  & $+$  & $+$  & $-$  & $-$  & $-$ \\
\hline
$\ell_{17}$                      & $-$  & $+$  & $-$  & $+$  & $+$  & $-$  & $+$  & $-$  & $-$  & $-$\\
\hline
$\ell_{18}$                      & $-$  & $+$  & $-$  & $+$  & $+$  & $-$  & $+$  & $-$  & $-$  &  $-$\\
\hline
$\ell_{19}$                      & $+$  & $+$  & $-$  & $-$  & $+$  & $-$  & $-$  & $-$  & $+$  & $-$ \\
\hline
$\ell_{20}$                      & $+$  & $+$  & $-$  & $-$  & $+$  & $-$  & $-$  & $-$  & $+$  & $-$ \\
\hline
$\ell_{21}$                      & $+$  & $-$  & $-$  & $+$  & $-$  & $-$  & $+$  & $-$  & $+$  & $-$ \\
\hline
$\ell_{22}$                      & $+$  & $-$  & $-$  & $+$  & $-$  & $-$  & $+$  & $-$  & $+$  & $-$ \\
\hline
$\ell_{23}$                      & $+$  & $-$  & $+$  & $-$  & $+$  & $-$  & $+$  & $+$  & $-$  & $-$ \\
\hline
$\ell_{24}$                      & $+$  & $-$  & $+$  & $-$  & $-$  & $-$  & $+$  & $+$  & $-$  & $-$ \\
\hline
$\ell_{25}$                      & $+$  & $+$  & $-$  & $-$  & $-$  & $+$  & $-$  & $+$  & $-$  & $-$ \\
\hline
$\ell_{26}$                      & $+$  & $+$  & $-$  & $-$  & $-$  & $+$  & $-$  & $+$  & $-$  &  $-$\\
\hline
$\ell_{27}$                      & $+$  & $-$  & $-$  & $+$  & $-$  & $+$  & $-$  & $-$  & $-$  & $+$ \\
\hline
$\ell_{28}$                      & $+$  & $-$  & $-$  & $+$  & $-$  & $+$  & $-$  & $-$  & $-$  & $+$ \\
\hline
$\ell_{29}$                      & $+$  & $-$  & $+$  & $-$  & $+$  & $-$  & $-$  & $-$  & $-$  & $+$ \\
\hline
$\ell_{30}$                      & $+$  & $-$  & $+$  & $-$  & $+$  & $-$  & $-$  & $-$  & $-$  & $+$ \\
\hline
\end{tabular}
\end{center}
\end{figure}

Now, let us describe the~intersection points of the~lines $\ell_{1},\ldots,\ell_{30}$. To do this, we set
\begin{align*}
\Sigma_{4}^1&=\big\{[1:0:0:0],[0:1:0:0],[0:0:1:0],[0:0:0:1]\big\},\\
\Sigma_{4}^2&=\big\{[1:1:1:-1],[1:1:-1:1],[1:-1:1:1],[-1:1:1:1]\big\},\\
\Sigma_{4}^3&=\big\{[1:1:1:1],[-1:-1:1:1],[1:-1:-1:1],[-1:1:-1:1]\big\},\\
\Sigma_{4}^4&=\big\{[0:0:1:1],[1:1:0:0],[0:0:-1:1],[1:-1:0:0]\big\},\\
\Sigma_{4}^5&=\big\{[1:0:1:0],[0:1:0:1],[-1:0:1:0],[0:-1:0:1]\big\},\\
\Sigma_{4}^6&=\big\{[0:1:1:0],[1:0:0:1],[0:-1:1:0],[-1:0:0:1]\big\},\\
\Sigma_{4}^7&=\big\{[i:0:0:1],[0:i:1:0],[-i:0:0:1], [0:-i:1:0]\big\},\\
\Sigma_{4}^8&=\big\{[i:0:1:0],[0:i:0:1],[0:-i:0:1],[-i:0:1:0]\big\},\\
\Sigma_{4}^9&=\big\{[i:1:0:0],[0:0:i:1],[-i:1:0:0],[0:0:-i:1]\big\},\\
\Sigma_{4}^{10}&=\big\{[i:i:1:1],[-i:-i:1:1],[i:-i:-1:1],[-i:i:-1:1]\big\},\\
\Sigma_{4}^{11}&=\big\{[1:i:i:1],[1:-i:-i:1],[-1:-i:i:1],[-1:i:-i:1]\big\},\\
\Sigma_{4}^{12}&=\big\{[1:i:-i:1],[-1:i:i:1],[-1:-i:-i:1],[1:-i:i:1]\big\},\\
\Sigma_{4}^{13}&=\big\{[i:1:i:1],[-i:1:-i:1],[-i:-1:i:1],[i:-1:-i:1]\big\},\\
\Sigma_{4}^{14}&=\big\{[i:1:-i:1],[i:-1:i:1],[-i:-1:-i:1],[-i:1:i:1]\big\},\\
\Sigma_{4}^{15}&=\big\{[i:i:-1:1],[-i:-i:-1:1],[i:-i:1:1],[-i:i:1:1]\big\}.
\end{align*}
Then the~subsets $\Sigma_{4}^{1},\ldots,\Sigma_{4}^{15}$ are $\mathbb{H}$-orbits of length $4$. Moreover, one has
$$
\Sigma_{4}^{1}\cup\Sigma_{4}^{2}\cup\cdots\cup\Sigma_{4}^{15}=\mathrm{Sing}\big(\ell_{1}+\ell_2+\cdots+\ell_{30}\big).
$$
So, for every $\ell_{i}$ and $\ell_{j}$ such that $\ell_{i}\ne\ell_{j}$ and $\ell_{i}\cap\ell_{j}\ne\varnothing$, one has $\ell_{i}\cap\ell_{j}\in\Sigma_{4}^{1}\cup\Sigma_{4}^{2}\cup\cdots\cup\Sigma_{4}^{15}$.
Furthermore, one can also check that
\begin{itemize}
\item every line among $\ell_{1},\ldots,\ell_{30}$ contains $6$ points in $\Sigma_{4}^{1}\cup\Sigma_{4}^{2}\cup\cdots\cup\Sigma_{4}^{15}$,
\item every point in $\Sigma_{4}^{1}\cup\Sigma_{4}^{2}\cup\cdots\cup\Sigma_{4}^{15}$ is contained in $3$ lines among $\ell_{1},\ldots,\ell_{30}$.
\end{itemize}

As in Remark~\ref{remark:S6}, let $\mathfrak{N}$ be the~normalizer of the~subgroup $\mathbb{H}$ in the~group $\mathrm{PGL}_4(\mathbb{C})$.
Then $\mathrm{Aut}(\mathbb{P}^3,\mathscr{C})\subset\mathfrak{N}$ and $\mathfrak{N}\cong\mathbb{H}.\mathfrak{S}_6$ by Remark~\ref{remark:S6}.
Moreover, one can show that
\begin{itemize}
\item the~group $\mathfrak{N}$ acts transitively on the~set $\{\mathcal{Q}_1,\ldots,\mathcal{Q}_{10}\}$,
\item the~group $\mathfrak{N}$ acts transitively on the~set $\{\ell_{1},\ldots,\ell_{30}\}$,
\item the~group $\mathfrak{N}$ acts transitively on the~set $\{\Sigma_{4}^1,\ldots,\Sigma_{4}^{15}\}$.
\end{itemize}

Now, we are ready to describe $\mathbb{H}$-orbits in $\mathbb{P}^3$. They can be described as follows:
\begin{enumerate}
\item $\Sigma_{4}^{1},\ldots,\Sigma_{4}^{15}$ are $\mathbb{H}$-orbits of length $4$;
\item $\mathbb{H}$-orbit of every point in $(\ell_{1}\cup\ell_2\cup\cdots\cup\ell_{30})\setminus(\Sigma_{4}^{1}\cup\Sigma_{4}^{2}\cup\cdots\cup\Sigma_{4}^{15})$ has length $8$;
\item $\mathbb{H}$-orbit of every point in $\mathbb{P}^3\setminus(\ell_{1}\cup\ell_2\cup\cdots\cup\ell_{30})$ has length $16$.
\end{enumerate}

\begin{lemma}
\label{lemma:30-lines-small-orbits}
The surface $\mathscr{S}$ does not contain $\mathbb{H}$-orbits of length $4$.
\end{lemma}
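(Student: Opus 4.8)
The plan is to use the classification of $\mathbb{H}$-orbits recalled just above: the only $\mathbb{H}$-orbits of length $4$ are $\Sigma_{4}^{1},\ldots,\Sigma_{4}^{15}$. Since $\mathscr{S}$ is $\mathbb{H}$-invariant, it contains the whole orbit $\Sigma_{4}^{i}$ as soon as it contains one of its points, so it suffices to prove that $\mathscr{S}$ contains no point of any $\Sigma_{4}^{i}$. I would argue by contradiction and assume $\Sigma_{4}^{i}\subset\mathscr{S}$ for some $i$.

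First I would reduce to the case $i=1$. The normalizer $\mathfrak{N}$ acts transitively on $\{\Sigma_{4}^{1},\ldots,\Sigma_{4}^{15}\}$, and each element of $\mathfrak{N}\subset\mathrm{PGL}_{4}(\mathbb{C})$ carries an irreducible quartic with sixteen nodes to another such surface; since $\mathfrak{N}$ normalizes $\mathbb{H}$, it preserves both the class of Kummer surfaces and the family of $\mathbb{H}$-invariant quartics \eqref{equation:Kummer-quartic-surface}. Hence, replacing $\mathscr{S}$ by $g(\mathscr{S})$ for a suitable $g\in\mathfrak{N}$, I may assume $\Sigma_{4}^{1}\subset\mathscr{S}$; in particular $[1:0:0:0]\in\mathscr{S}$.

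Substituting $[1:0:0:0]$ into \eqref{equation:Kummer-quartic-surface} forces $a=0$, and then the Segre relation \eqref{equation:Segre} collapses to $2bcd=0$, so that $bcd=0$. Permuting $x_{1},x_{2},x_{3}$ by an element of $\mathfrak{N}$ that fixes $\Sigma_{4}^{1}$ and permutes the parameters $b,c,d$, I may assume $d=0$, so that $\mathscr{S}$ is defined by
\[
2b\big(x_{0}^{2}x_{1}^{2}+x_{2}^{2}x_{3}^{2}\big)+2c\big(x_{0}^{2}x_{2}^{2}+x_{1}^{2}x_{3}^{2}\big)+4e\,x_{0}x_{1}x_{2}x_{3}=0.
\]
Because $a=d=0$, every monomial of this polynomial is a product of two distinct squared coordinates or equals $x_{0}x_{1}x_{2}x_{3}$; consequently each of its four first partial derivatives is a sum of monomials each divisible by two distinct coordinates, and therefore vanishes at every coordinate point. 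Thus $\mathscr{S}$ would be singular at all four points of $\Sigma_{4}^{1}$.

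Finally, $\Sigma_{4}^{1}$ is an $\mathbb{H}$-orbit of length $4$, whereas $\mathrm{Sing}(\mathscr{S})$ is an $\mathbb{H}$-orbit of length $16$, so the two are disjoint and $\mathscr{S}$ would carry at least $16+4=20$ singular points. This contradicts the fact that a Kummer quartic surface has exactly sixteen ordinary double points (indeed, an irreducible quartic surface in $\mathbb{P}^{3}$ has at most sixteen nodes), and the lemma follows. The conceptual crux is that passing through a single point of a short orbit forces $a=0$, after which \eqref{equation:Segre} degenerates the surface; the only points requiring care are checking that the two reductions really stay inside the class of Kummer surfaces while moving the chosen orbit onto $\Sigma_{4}^{1}$ and the vanishing parameter onto $d$, together with the routine derivative computation yielding the four extra singularities.
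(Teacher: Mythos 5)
Your argument is correct and follows essentially the same route as the paper's explicit check: reduce to $\Sigma_4^1$ via the transitive action of $\mathfrak{N}$, deduce $a=0$ and then $bcd=0$ from the Segre relation, and derive a contradiction from extra singularities of $\mathscr{S}$. The only (harmless) difference is in the endgame: the paper observes that $a=bcd=0$ forces non-isolated singularities (and also offers an alternative via Xiao's theorem on symplectic actions), whereas you exhibit the four coordinate points as additional isolated singular points disjoint from the length-$16$ orbit $\mathrm{Sing}(\mathscr{S})$ --- in fact for that step $a=0$ alone already suffices, so the reduction to $d=0$ is not even needed.
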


\begin{proof}
The assertion follows from \cite[Theorem~3]{Xiao}, because the~$\mathbb{H}$-action on the~minimal resolution of the~surface $\mathscr{S}$ is symplectic \cite{Hashimoto,Mukai}.
Instead, we can check this~\mbox{explicitly}.
Indeed, it is enough to check that $\mathscr{S}$ does not contain $\Sigma_{4}^1$, since the~group $\mathfrak{N}$ transitively
permutes the~orbits $\Sigma_{4}^{1},\ldots,\Sigma_{4}^{15}$.
If $\Sigma_{4}^1\subset \mathscr{S}$, then $\mathscr{S}$ is given by \eqref{equation:Kummer-quartic-surface} with $a=bcd=0$,
which implies that $\mathscr{S}$ has non-isolated singularities.
\end{proof}

\begin{corollary}
\label{corollary:30-lines-small-orbits}
Every line among $\ell_1,\ldots,\ell_{30}$ intersects $\mathscr{S}$ transversally by $4$ points.
\end{corollary}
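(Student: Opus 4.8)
The plan is to fix one line $\ell$ among $\ell_1,\dots,\ell_{30}$ and to study the degree-four divisor $D=\mathscr{S}\vert_{\ell}$ that $\mathscr{S}$ cuts out on $\ell\cong\mathbb{P}^1$, exploiting the residual $\mathbb{H}$-action on $\ell$ together with Lemma~\ref{lemma:30-lines-small-orbits}. Write $\ell=\ell_{2k-1}$, say, so that $\ell$ together with its partner $\ell'=\ell_{2k}$ forms the $\mathbb{H}$-irreducible pair $\ell+\ell'$; these are the two skew lines $L_g,L_g'$ for some nontrivial $g\in\mathbb{H}$, hence disjoint. Since $\mathbb{H}$ interchanges $\ell$ and $\ell'$, the stabilizer $\mathbb{H}'\subseteq\mathbb{H}$ of the line $\ell$ has index two, so $\mathbb{H}'\cong\mumu_2^3$; and as $\mathbb{H}'$ preserves both $\mathscr{S}$ and $\ell$, the divisor $D$ is $\mathbb{H}'$-invariant.

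First I would record the orbit structure of $\mathbb{H}'$ on $\ell$. By the orbit classification above, the six points of $\ell\cap(\Sigma_{4}^1\cup\cdots\cup\Sigma_{4}^{15})$ are precisely the points of $\ell$ whose $\mathbb{H}$-orbit has length $4$, while every other point of $\ell$ has $\mathbb{H}$-orbit of length $8$. For such a point $p$, the subset $\mathbb{H}'\cdot p$ lies in $\ell$ and the complementary coset $(\mathbb{H}\setminus\mathbb{H}')\cdot p$ lies in $\ell'$; since $\ell\cap\ell'=\varnothing$ these are disjoint and of equal cardinality $|\mathbb{H}'\cdot p|$, so $8=2\,|\mathbb{H}'\cdot p|$ forces the $\mathbb{H}'$-orbit of $p$ inside $\ell$ to have length exactly $4$.

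Next I would invoke Lemma~\ref{lemma:30-lines-small-orbits}. If $\mathscr{S}$ passed through one of the six special points $q\in\ell\cap\Sigma_{4}^{j}$, then by $\mathbb{H}$-invariance of $\mathscr{S}$ it would contain the whole length-four orbit $\Sigma_{4}^{j}$, contradicting the lemma; the same reasoning also rules out $\ell\subseteq\mathscr{S}$. Hence $\ell\not\subseteq\mathscr{S}$, so $D$ is a genuine effective divisor of degree $4$, and its support is disjoint from those six points, i.e.\ it consists only of points whose $\mathbb{H}'$-orbit inside $\ell$ has length $4$.

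Finally I would conclude by a multiplicity count. If some $p\in\mathrm{Supp}(D)$ occurred in $D$ with multiplicity $\geqslant 2$, then, $D$ being $\mathbb{H}'$-invariant, the entire length-four $\mathbb{H}'$-orbit of $p$ would occur with multiplicity $\geqslant 2$, forcing $\deg D\geqslant 8$; this is absurd since $\deg D=4$. Therefore $D$ is reduced, consisting of four distinct points, which is exactly the assertion that $\ell$ meets $\mathscr{S}$ transversally in four points. The only delicate step, and the one I would check most carefully, is the orbit-length bookkeeping of the second paragraph showing that the non-special points of $\ell$ have $\mathbb{H}'$-orbit of length $4$ inside $\ell$; once Lemma~\ref{lemma:30-lines-small-orbits} removes the special points, the rest is a pure degree count.
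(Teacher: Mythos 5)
Your argument is correct and is essentially the paper's: both reduce the claim to Lemma~\ref{lemma:30-lines-small-orbits} by orbit counting, the paper by observing that if $|\ell_{2k-1}\cap\mathscr{S}|<4$ then the $\mathbb{H}$-stable set $(\ell_{2k-1}\cup\ell_{2k})\cap\mathscr{S}$ has fewer than $8$ points and hence contains a length-$4$ orbit, you by passing to the index-two stabilizer of a single line and checking that every admissible point of $\ell\cap\mathscr{S}$ sweeps out a length-$4$ orbit inside $\ell$. Your version is a more detailed rendering of the same idea, usefully making explicit the steps the paper leaves implicit: that $\ell\not\subset\mathscr{S}$, that the six points of $\ell$ lying in $\Sigma_4^1\cup\cdots\cup\Sigma_4^{15}$ are excluded from the intersection, and why a degree-$4$ divisor supported on four distinct points must be reduced, i.e.\ the intersection is transversal.
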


\begin{proof}
Fix $k\in\{1,\ldots,15\}$.
If $|\ell_{2k-1}\cap \mathscr{S}|<4$, then the~subset~$(\ell_{2k-1}\cup\ell_{2k})\cap \mathscr{S}$ contains an $\mathbb{H}$-orbit of length $4$,
which contradicts Lemma~\ref{lemma:30-lines-small-orbits}.
Therefore, we have $|\ell_{2k-1}\cap \mathscr{S}|=4$.
Similarly, we see that $|\ell_{2k}\cap \mathscr{S}|=4$.
\end{proof}

Now, let us prove one result that plays a crucial role in the~proof of Theorem~\ref{theorem:main}.

\begin{lemma}
\label{lemma:curves}
Let $C$ be a possibly reducible $\mathbb{H}$-irreducible curve in $\mathbb{P}^3$ such that $\mathrm{deg}(C)<8$.
Then one of the~following two possibilities hold:
\begin{itemize}
\item[(a)] either $C=\ell_{2k-1}+\ell_{2k}$ for some $k\in\{1,\ldots,15\}$;
\item[(b)] or $C$ is a union of $4$ disjoint lines and $C\subset\mathcal{Q}_i$ for some $i\in\{1,\ldots,10\}$.
\end{itemize}
\end{lemma}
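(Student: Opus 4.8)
The plan is to split $C$ into the $\mathbb{H}$-orbit of its irreducible components and bound both their number and their degree. Since $C$ is $\mathbb{H}$-irreducible, its components $C_1,\dots,C_m$ form a single $\mathbb{H}$-orbit, so each has the same degree $e$, we have $me=\deg(C)<8$, and $m=16/|\mathrm{Stab}_{\mathbb{H}}(C_1)|$ divides $16$. As $m\leq\deg(C)<8$ forces $m\in\{1,2,4\}$, the possibilities are $(m,e)$ with $m=4,\,e=1$; or $m=2,\,e\leq3$; or $m=1,\,e\leq7$. I will then show that the only survivors are $m=2,\,e=1$ (case (a)) and $m=4,\,e=1$ (case (b)).

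The recurring tools are: (i) the four-dimensional Schr\"odinger representation of the Heisenberg lift of $\mathbb{H}$ is irreducible, so $\mathbb{P}^3$ contains no $\mathbb{H}$-invariant point, line, or plane; (ii) a nontrivial element of $\mathbb{H}$ fixes only two skew lines, so it cannot fix pointwise a plane or any irreducible curve other than one of $\ell_1,\dots,\ell_{30}$, and at a smooth point of an invariant curve the stabilizer embeds in the tangent line, hence is cyclic and therefore at most $\mumu_2$; (iii) $\mumu_2^k\not\hookrightarrow\mathrm{PGL}_2(\mathbb{C})$ and $\mumu_2^k\not\hookrightarrow\mathrm{PGL}_3(\mathbb{C})$ for $k\geq3$, while $\mumu_2^4\hookrightarrow\mathrm{PGL}_4(\mathbb{C})$ is realized (up to conjugacy) only by the Heisenberg group $\mathbb{H}$. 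For the line cases: if $e=1$ and $m=2$, the order-$8$ stabilizer $\mumu_2^3$ cannot act faithfully on $C_1\cong\mathbb{P}^1$, so some involution fixes $C_1$ pointwise, forcing $C_1\in\{\ell_1,\dots,\ell_{30}\}$ and $C=\ell_{2k-1}+\ell_{2k}$, which is (a). If $e=1$ and $m=4$, the order-$4$ stabilizer $\Gamma\cong\mumu_2^2$ acts faithfully on $C_1$; ruling out coplanar, concurrent, matched, and $4$-cycle configurations (these produce either an invariant plane or point, or an intersection point of stabilizer order $\geq8$, i.e. of orbit length $<4$, or force the four lines to join pairs of some $\Sigma_4^j$ and hence lie among $\ell_1,\dots,\ell_{30}$) shows the four lines are pairwise skew; since $\mathbb{H}$ centralizes $\Gamma$, the unique smooth quadric swept out by the $\Gamma$-invariant lines on which $\Gamma$ acts faithfully is $\mathbb{H}$-invariant, hence equals some $\mathcal{Q}_i$, and the four lines form one of its rulings, which is (b).

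It remains to exclude $m=1$, i.e. an irreducible, $\mathbb{H}$-invariant curve $C$ of degree $3\leq d\leq7$; by (i) such $C$ is non-degenerate, so $h^0(\mathcal{O}_C(1))\geq4$, and $\mathbb{H}$ acts faithfully on its normalization. I would argue by the geometric genus $g$ (bounded by $g\leq6$ via Castelnuovo for space curves). Genus $0$ gives $\mathbb{H}\hookrightarrow\mathrm{PGL}_2$; for genus $1$ and $2$ the maximal elementary abelian $2$-subgroup of $\mathrm{Aut}(C)$ is $\mumu_2^3$ (for genus $2$ because the quotient of $\mathrm{Aut}(C)$ by the hyperelliptic involution embeds in $\mathrm{PGL}_2$); genus $3$ forces $C$ to be hyperelliptic (again $\mathbb{H}$ surjects to a subgroup of $\mathrm{PGL}_2$ on the quotient) or a plane quartic ($\mathbb{H}\hookrightarrow\mathrm{PGL}_3$); and genus $4$ forces the Heisenberg action on the canonical $\mathbb{P}^3$ to preserve the canonical quadric, which must then be a smooth $\mathcal{Q}_i$ carrying $C$ as a bidegree $(3,3)$ curve --- all impossible. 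For any such $C$ lying on a quadric I would use the key computation that, for the natural $\mumu_2^2\times\mumu_2^2$ action on $\mathcal{Q}_i\cong\mathbb{P}^1\times\mathbb{P}^1$, the trivial character occurs in $\mathrm{Sym}^a$ of the factor representation only for $a\in\{0,4,6,\dots\}$; hence an irreducible invariant curve of bidegree $(a,b)$ with $a,b\geq1$ satisfies $a+b\geq8$, so no irreducible invariant curve of degree $\leq7$ lies on any $\mathcal{Q}_i$. This disposes of every genus-$\geq3$ case forced onto a quadric, in particular degree $7$, genus $6$.

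The main obstacle is the single borderline case of degree $7$ and genus $5$, which need not lie on a quadric and for which $\mumu_2^4$ does act on an abstract genus-$5$ curve. Here I would set $L=\mathcal{O}_C(1)$ and use Riemann--Roch: $h^0(L)-h^1(L)=3$ with $h^0(L)\geq4$ forces $h^1(L)=h^0(\omega_C\otimes L^{-1})=1$ (the alternative $h^0(L)>4$ is excluded because $\deg(\omega_C\otimes L^{-1})=1$ bounds $h^0\leq1$). Thus $|\omega_C\otimes L^{-1}|$ consists of a single effective divisor of degree $1$, i.e. of one point of $C$, which is canonically defined and hence $\mathbb{H}$-invariant; but by (i)--(ii) $\mathbb{H}$ has no fixed point on $C$, since every orbit at a smooth point has length $\geq8$. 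This contradiction eliminates the last case and completes the classification. I expect the genus-by-genus bookkeeping --- especially pinning down which pairs $(d,g)$ are geometrically realizable for a non-degenerate space curve with $h^0(\mathcal{O}_C(1))\geq4$, together with the bidegree character computation on the quadrics --- to be the most laborious part, with the degree-$7$, genus-$5$ argument above being the one genuinely delicate point.
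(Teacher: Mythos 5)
Your overall architecture (decomposing $C$ into the $\mathbb{H}$-orbit of its components, stabilizer analysis for the line cases, genus bounds for the irreducible case) is close to the paper's in the reducible cases, and your degree-$7$, genus-$5$ Riemann--Roch argument is correct and rather elegant. But there are two genuine gaps. First, you list $m=2$, $e\leqslant 3$ among the possibilities and then only treat $e=1$: the cases of two irreducible conics and two (plane or twisted) cubics are announced but never excluded. They are easy to kill --- the order-$8$ stabilizer $\cong\mumu_2^3$ of a rational component cannot act faithfully on it, so some involution of $\mathbb{H}$ would fix the component pointwise, whereas its fixed locus is a pair of skew lines; and two plane cubics would give an $\mathbb{H}$-orbit of planes of length $\leqslant 2$, which does not exist --- but as written these cases are simply absent. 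Second, your ``key computation'' on the quadrics is wrong as stated. Invariance of a curve on $\mathcal{Q}_i\cong\mathbb{P}^1\times\mathbb{P}^1$ requires its defining section to be only a \emph{semi-invariant} of the lifted group (an eigenvector for some character), not an invariant, and semi-invariants of degree $2$ on a factor do exist: the three fixed-point pairs of the involutions in $\mumu_2^2\subset\mathrm{PGL}_2(\mathbb{C})$ are invariant degree-$2$ divisors. The correct conclusion is only that both bidegree components are even, hence $a+b\geqslant 4$, not $a+b\geqslant 8$; indeed $\mathcal{Q}_i\cap\mathcal{Q}_j$ is an $\mathbb{H}$-invariant curve of bidegree $(2,2)$, which your stated computation would forbid. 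Your two applications (total degree $7$ on a quadric, and bidegree $(3,3)$ for a canonical genus-$4$ curve) do survive under the corrected parity statement, but bidegrees $(2,2)$ and $(2,4)$ must then be eliminated by the genus bound rather than by the character count, and the genus-$4$ step also tacitly assumes that the representation of $\mumu_2^4$ on $H^0(\omega_C)$ is the Schr\"odinger one (Riemann--Hurwitz excludes genus $4$ more directly).

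You also miss the paper's opening move, which would have spared you most of the labour: since $\mathcal{Q}_1\cap\cdots\cap\mathcal{Q}_{10}=\varnothing$, some $\mathcal{Q}_j$ does not contain $C$, and $\mathcal{Q}_j\cdot C=2\deg(C)$ is a sum of local intersection numbers constant on $\mathbb{H}$-orbits, all of length divisible by $4$; hence $\deg(C)$ is even. This removes degrees $3$, $5$, $7$ at a stroke, so the entire degree-$7$ discussion --- the part you identify as the delicate core of your plan --- is unnecessary. For the remaining irreducible case ($\deg(C)=6$, nondegenerate because there are no $\mathbb{H}$-invariant planes), Castelnuovo gives geometric genus $\leqslant 4$, and no curve of genus $\leqslant 4$ carries a faithful $\mumu_2^4$ (your Riemann--Hurwitz-type count, or \cite[Lemma~3.2]{CheltsovSarikyan}); the paper finishes this case instead with a Cremona-involution trick and the divisibility of $\mathcal{Q}_j\cdot C=12$ by orbit lengths. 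Either route works once the parity is in hand, but your proof as written is incomplete without repairing the two gaps above.
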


\begin{proof}
Intersecting $C$ with quadric surfaces $\mathcal{Q}_1,\ldots,\mathcal{Q}_{10}$, we conclude that $\mathrm{deg}(C)$ is even.
This gives $\mathrm{deg}(C)\in\{2,4,6\}$.

Suppose that $C$ is reducible. Since $|\mathbb{H}|=16$, we have the~following possibilities:
\begin{itemize}
\item[(i)] $C$ is a union of $2$ lines,
\item[(ii)] $C$ is a union of $4$ lines,
\item[(iii)] $C$ is a union of $2$ irreducible conics,
\item[(iv)] $C$ is a union of $3$ irreducible conics.
\item[(v)] $C$ is a union of $2$ irreducible plane cubics,
\item[(vi)] $C$ is a union of $2$ twisted cubics,
\end{itemize}
Since $\mathbb{P}^3$ does not have $\mathbb{H}$-orbits of length $2$ and $3$, cases (iii), (iv) and (v) are impossible.
Similarly, case (vi) is also impossible, because $\mumu_2^3$ cannot faithfully act on a rational curve.
Thus, either $C$ is a union of $2$ lines, or $C$ is a union of $4$ lines.

Suppose that $C=L_1+L_2$, where $L_1$ and $L_2$ are lines.
Then $\mathrm{Stab}_{\mathbb{H}}(L_1)\cong\mumu_2^3$, and this group cannot act faithfully on $L_1\cong\mathbb{P}^1$.
Therefore, there exists a non-trivial $g\in\mathrm{Stab}_{\mathbb{H}}(L_1)$ such that $g$ pointwise fixes the~line $L_1$.
But this means that $L_1$ is one of the~lines $\ell_1,\ldots,\ell_{30}$, so we have $C=\ell_{2k-1}+\ell_{2k}$ for some $k\in\{1,\ldots,15\}$ as required.

Suppose $C=L_1+L_2+L_3+L_4$, where $L_1$, $L_2$, $L_3$, $L_4$ are lines.
Then $\mathrm{Stab}_{\mathbb{H}}(L_1)\cong\mumu_2^2$.
Note that $\mathrm{Stab}_{\mathbb{H}}(L_1)$ must act faithfully on $L_1$,
because $L_1$ is not one of the~lines $\ell_1,\ldots,\ell_{30}$.
This implies that $L_1$ does not have $\mathrm{Stab}_{\mathbb{H}}(L_1)$-fixed points,
which implies that $\mathbb{P}^3$ also does not have $\mathrm{Stab}_{\mathbb{H}}(L_1)$-fixed points.
All subgroups in $\mathbb{H}$ isomorphic to $\mumu^2$ with these property are conjugated by the~action of the~group $\mathfrak{N}$.
Thus, we may assume that
$$
\mathrm{Stab}_{\mathbb{H}}\big(L_1\big)=\langle A_1A_2,A_3\rangle.
$$
This subgroup leaves invariant rulings of the~quadric surface $\mathcal{Q}_8\cong\mathbb{P}^1\times\mathbb{P}^1$.
To be precise, for every $[\lambda:\mu]\in\mathbb{P}^1$, the~group $\langle A_1A_2,A_3\rangle$ leaves invariant the~line
$$
\big\{\lambda x_0+\mu x_3=\lambda x_1+\mu x_2=0\big\}\subset\mathcal{Q}_8,
$$
and these are all  $\langle A_1A_2,A_3\rangle$-invariant lines in $\mathbb{P}^3$.
So, the~lines $L_1$, $L_2$, $L_3$, $L_4$ are~disjoint,
and all  of them are contained in the~quadric $\mathcal{Q}_8$.
Thus, we are done in this case.

Therefore, to complete the~proof of the~lemma, we may assume that  $C$ is irreducible.
Observe that the~curve $C$ is not planar, because $\mathbb{P}^3$ does not contain $\mathbb{H}$-invariant planes.
Moreover, the~curve $C$ is singular: otherwise its genus is $\leqslant 4$ by the~Castelnuovo bound,
but $\mathbb{H}$ cannot faithfully act on a smooth curve of genus less than $5$ by \cite[Lemma~3.2]{CheltsovSarikyan}.
Therefore, we conclude that $\mathrm{deg}(C)=6$, since otherwise the~curve $C$ would be planar.

We claim that the~curve $C$ does not contain $\mathbb{H}$-orbits of length $4$. Suppose that~it~does.
Since $\mathfrak{N}$ transitively permutes the~orbits $\Sigma_{4}^{1},\ldots,\Sigma_{4}^{15}$,
we may assume that $\Sigma_{4}^1\subset C$.
Then
$$
\Sigma_{4}^1\subset\mathrm{Sing}(C),
$$
because the~stabilizer in $\mathbb{H}$ of a smooth point in $C$ must be a cyclic group \cite[Lemma~2.7]{FZ}.
Let $\iota\colon\mathbb{P}^3\dasharrow\mathbb{P}^3$ be the~standard Cremona involution, which is given by
$$
[x_0:x_1:x_2:x_3]\mapsto [x_1x_2x_3:x_0x_2x_3:x_0x_1x_3:x_0x_1x_2].
$$
Then $\iota$ centralizes $\mathbb{H}$. On the~other hand, the~curve $\iota(C)$ is a conic,
because $\mathrm{deg}(C)=6$, and $C$ is singular at every point of the~$\mathbb{H}$-orbit $\Sigma_{4}^1$.
But $\mathbb{P}^3$ contains no $\mathbb{H}$-invariant conics, because it contains no $\mathbb{H}$-invariant planes.
Thus, $C$ contains no $\mathbb{H}$-orbits of length $4$.

Note that $\mathcal{Q}_1\cap\mathcal{Q}_2\cap\cdots\cap\mathcal{Q}_{10}=\varnothing$.
So, at least one quadric among $\mathcal{Q}_1,\ldots,\mathcal{Q}_{10}$ does not contain the~curve $C$.
Without loss of generality, we may assume that $C\not\subset\mathcal{Q}_1$. Then
$$
12=\mathcal{Q}_1\cdot C\geqslant|\mathcal{Q}_1\cap C|,
$$
which implies that the~intersection $\mathcal{Q}_1\cap C$ is an $\mathbb{H}$-orbit of length $8$,
because we already proved that $C$ does not contain $\mathbb{H}$-orbits of length~$4$.
For a point $P\in \mathcal{Q}_1\cap C$, we have
$$
12=\mathcal{Q}_1\cdot C=|\mathrm{Orb}_{\mathbb{H}}(P)|\big(\mathcal{Q}_1\cdot C\big)_P=8\big(\mathcal{Q}_1\cdot C\big)_P,
$$
which is impossible, since $12$ is not divisible by $8$.
\end{proof}

\begin{corollary}
\label{corollary:curves}
Let $\mathcal{Q}$ be an $\mathbb{H}$-invariant quadric among $\mathcal{Q}_1,\ldots,\mathcal{Q}_{10}$, and let $C=\mathscr{S}\vert_{\mathcal{Q}}$.
Then one of the following possibilities holds:
\begin{itemize}
\item $C$ is a smooth irreducible curve of degree $8$ and genus $9$;
\item $C$ is an $\mathbb{H}$-irreducible curve which is a union of two irreducible smooth quartic elliptic curves
that intersect each other transversally by an $\mathbb{H}$-orbit of length $8$;
\item $C=\mathcal{L}_4+\mathcal{L}_4^\prime$ for $\mathbb{H}$-irreducible curves $\mathcal{L}_4$ and $\mathcal{L}_4^\prime$
consisting of $4$ disjoint lines such that the~intersection $\mathcal{L}_4\cap\mathcal{L}_4^\prime$ is an $\mathbb{H}$-orbit of length $16$.
\end{itemize}
\end{corollary}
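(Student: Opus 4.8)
The plan is to study $C=\mathscr{S}\vert_{\mathcal{Q}}$ as an $\mathbb{H}$-invariant effective divisor on $\mathcal{Q}\cong\mathbb{P}^1\times\mathbb{P}^1$. Since $\mathscr{S}$ is an irreducible quartic and $\mathcal{Q}$ an irreducible quadric, $\mathcal{Q}\not\subset\mathscr{S}$, so $C$ is a genuine curve with $C\in|\mathcal{O}_{\mathcal{Q}}(4,4)|$; thus $\deg C=8$, adjunction on $\mathcal{Q}$ gives $p_a(C)=9$, and $h^1(\mathcal{Q},\mathcal{O}(-C))=0$ shows $C$ is connected. I would then record the facts that drive everything: by Lemma~\ref{lemma:curves} every $\mathbb{H}$-irreducible curve of degree $<8$ is either a pair $\ell_{2k-1}+\ell_{2k}$ or a quadruple of disjoint lines in one ruling of some $\mathcal{Q}_i$; by Corollary~\ref{corollary:30-lines-small-orbits} no $\ell_i$ lies on $\mathscr{S}$, so no $\ell$-pair can be a component of $C$. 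I would also use that $\delta$-invariants are constant along $\mathbb{H}$-orbits and that $\mathscr{S}$ carries no $\mathbb{H}$-orbit of length $4$ (Lemma~\ref{lemma:30-lines-small-orbits}), so every orbit supported on $C$ has length $8$ or $16$; finally, $\mathbb{H}$ acts on $\mathcal{Q}$ through $K_1\times K_2$ with $K_i\cong\mumu_2^2$, and no nontrivial element fixes $\mathcal{Q}$ pointwise.

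First I would check that $C$ is reduced: a component of multiplicity $\geq 2$ would have reduction an $\mathbb{H}$-irreducible curve of degree $\leq 4$, hence (by the two facts above) four disjoint lines of one ruling, of class $(4,0)$; but $2(4,0)$ already exceeds $(4,4)$, a contradiction. Next I decompose $C$ into $\mathbb{H}$-irreducible components. If some component has degree $<8$ it must be four disjoint lines, of class $(4,0)$ or $(0,4)$; the residual class is then $(0,4)$ or $(4,0)$, which is forced to be four disjoint lines of the complementary ruling. This is the third alternative $C=\mathcal{L}_4+\mathcal{L}_4'$. Since the stabilizer $K_1\times 1$ of a generic line in one ruling meets the stabilizer $1\times K_2$ of a generic line in the other ruling trivially in $\mathbb{H}$, each of the $16=(4,0)\cdot(0,4)$ nodes has trivial stabilizer, so they form a single $\mathbb{H}$-orbit of length $16$.

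Otherwise $C$ has no component of degree $<8$, hence is $\mathbb{H}$-irreducible of degree $8$. If $C$ is geometrically irreducible, I would prove smoothness: any singular point lies in an orbit of length $8$ or $16$, so $\delta_{\mathrm{tot}}\geq 8$ (length $16$ being excluded by $\delta_{\mathrm{tot}}\leq p_a=9$), forcing the normalization to have genus $\leq 1$, on which $\mathbb{H}\cong\mumu_2^4$ would act faithfully --- impossible by \cite[Lemma~3.2]{CheltsovSarikyan}. Thus $C$ is smooth of genus $9$, the first alternative. If $C$ is geometrically reducible, its geometric components form one $\mathbb{H}$-orbit of size $m$ with components of degree $8/m$; here $m=8$ would make them lines, so $m\in\{2,4\}$, and since $\mathbb{H}$ preserves both rulings each component has class $(4/m,4/m)$. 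For $m=4$ the components are four smooth $(1,1)$-conics, and the connected nodal curve $C$ with four rational components and $p_a=9$ satisfies $\sum_p\delta_p=12$; as $\delta_p$ is constant on orbits of length $8$ or $16$, this sum would be a nonnegative combination of $8$ and $16$, which $12$ is not --- so $m=4$ is impossible. Hence $m=2$: two $(2,2)$-curves swapped by $\mathbb{H}$, each with stabilizer $\cong\mumu_2^3$; a node of one of them would be a $\mumu_2^3$-fixed point, lying in an orbit of length $\leq 2$, which does not occur, so both are smooth elliptic quartics meeting in the $(2,2)\cdot(2,2)=8$ points of a single length-$8$ orbit. This is the second alternative.

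The main obstacle is the geometrically reducible case, and specifically the exclusion of the four-conic configuration: unlike the other degenerate possibilities it survives the ``no faithful $\mumu_2^k$-action on a rational curve'' arguments, since a Klein four-group does act faithfully on $\mathbb{P}^1$, and it is ruled out only by the integrality clash between $\sum_p\delta_p=12$ and the admissible orbit lengths $8,16$. The other point requiring care is the bidegree bookkeeping on $\mathbb{P}^1\times\mathbb{P}^1$: one must ensure that ``four disjoint lines'' always fills one whole ruling and that the two swapped components are forced into class $(2,2)$ rather than $(1,3)$, which is exactly where $\mathbb{H}$-invariance of the class and the $K_1\times K_2$ description of the action are used.
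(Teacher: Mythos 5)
Your proposal is correct and follows essentially the same route as the paper: reduce to the non-$\mathbb{H}$-irreducible case via Lemma~\ref{lemma:curves} and Corollary~\ref{corollary:30-lines-small-orbits}, constrain the reducible $\mathbb{H}$-irreducible case by bidegrees on $\mathcal{Q}\cong\mathbb{P}^1\times\mathbb{P}^1$ (the paper phrases this as $\mathrm{Pic}^{\mathbb{H}}(\mathcal{Q})\cong\mathbb{Z}^2$), kill the four-conic configuration by playing the singular-point count against the admissible orbit lengths $8$ and $16$, and handle the irreducible case via $p_a=9$, $\delta\geqslant 8$ and the non-existence of a faithful $\mumu_2^4$-action on curves of genus less than $5$. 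Your write-up supplies several details the paper leaves implicit --- reducedness of $C$, the explicit contradiction $\sum_p\delta_p=12\not\in 8\mathbb{Z}_{\geqslant 0}+16\mathbb{Z}_{\geqslant 0}$ for four conics, and the smoothness of the two elliptic quartics via the fixed-point argument for their index-two stabilizers --- but the underlying strategy is identical.
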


\begin{proof}
If $C$ is not $\mathbb{H}$-irreducible, then Lemma~\ref{lemma:curves} and Corollary~\ref{corollary:30-lines-small-orbits} imply the~assertion.
Thus, we may assume that $C$ is $\mathbb{H}$-irreducible.

Suppose that $C$ is reducible.
By Lemma~\ref{lemma:30-lines-small-orbits}, $C$ does not contain $\mathbb{H}$-orbits of length $4$.
Observe also that $\mathrm{Pic}^{\mathbb{H}}(\mathcal{Q})\cong\mathbb{Z}^2$.
Thus, since $C$ is $\mathbb{H}$-irreducible, we conclude that
\begin{itemize}
\item[(i)] either $C$ is a union of $4$ irreducible conics,
\item[(ii)] or $C$ is an $\mathbb{H}$-irreducible curve which is a union of two irreducible smooth quartic elliptic curves
that intersect each other transversally by an $\mathbb{H}$-orbit of length $8$.
\end{itemize}
In the latter case, we are done.
In the former case, we have $|\mathrm{Sing}(C)|\leqslant 12$, which implies that
$\mathrm{Sing}(C)$ is an~$\mathbb{H}$-orbit of length $8$, which immediately leads to a contradiction.

Thus, we may assume that $C$ is irreducible.
The arithmetic genus of the curve $C$ is $9$.
If $C$ is smooth, we are done.
If $C$ is singular, then the~genus of it normalization is $\leqslant 1$,
because $C$ does not contain $\mathbb{H}$-orbits of length $4$ by Lemma~\ref{lemma:30-lines-small-orbits}.
But $\mathbb{H}$ cannot faithfully act on a smooth curve of genus less than $5$ by \cite[Lemma~3.2]{CheltsovSarikyan}.
\end{proof}

Now, we are ready to prove Theorem~\ref{theorem:main}.

\begin{proof}[{Proof of Theorem~\ref{theorem:main}}]
Let $G$ be a subgroup in $\mathrm{Aut}(X)$ such that $\mathrm{Cl}^G(X)\cong\mathbb{Z}$ and
$$
\mathbb{H}\subseteq\upsilon(G),
$$
so $G$ contains a subgroup $\mathbb{H}^\rho$ for some homomorphism $\rho\colon\mathbb{H}\to\mumu_2$.
We must prove that the~threefold $X$ is $G$-birationally super-rigid. Suppose it is not $G$-birationally super-rigid.
Then there are a~positive integer $n$ and a $G$-invariant linear subsystem $\mathcal{M}\subset |nH|$
such that the~linear system $\mathcal{M}$ does not have fixed components, but $(X,\frac{2}{n}\mathcal{M})$ is not canonical.

Starting from this moment, we are going to forget about the~group $G$. In the~following, we will work only with its subgroup $\mathbb{H}^\rho$.
Recall that $\upsilon(\mathbb{H}^\rho)=\mathbb{H}$.

Let $Z$ be the~center of non-canonical singularities of the~log pair $(X,\frac{2}{n}\mathcal{M})$ that has maximal dimension.
We claim that $Z$ must be a point.
Indeed, suppose that $Z$ is a curve. Let~$M$ be sufficiently general surface in the~linear system $\mathcal{M}$.
Then
\begin{equation}
\label{equation:mult-big}
\mathrm{mult}_Z\big(M\big)>\frac{n}{2}
\end{equation}
by \cite[Theorem~4.5]{KoMo98}. Let us seek for a contradiction.

Let $\mathscr{Z}$ be an~$\mathbb{H}^\rho$-irreducible curve in $X$ whose irreducible components is the~curve $Z$,
Then, arguing as in the~proof of Proposition~\ref{proposition:80-49}, we see that
$$
H\cdot \mathscr{Z}\leqslant 7.
$$
In particular, we conclude that $\pi(\mathscr{Z})$ is a $\mathbb{H}$-invariant curve of degree $\leqslant 7$.
By Lemma~\ref{lemma:curves}, the~curve $\pi(Z)$ is a line, and one of the~following two possibilities hold:
\begin{itemize}
\item[(a)] either $\pi(\mathscr{Z})=\ell_{2k-1}+\ell_{2k}$ for some $k\in\{1,\ldots,15\}$;
\item[(b)] or $\pi(\mathscr{Z})$ is a union of $4$ disjoint lines and $\pi(\mathscr{Z})\subset\mathcal{Q}_i$ for some $i\in\{1,\ldots,10\}$.
\end{itemize}
Let us deal with these two cases separately.

Suppose we are in case (a). Without loss of generality, we may assume $\pi(\mathscr{Z})=\ell_{1}+\ell_{2}$.
Let $C_1$ and $C_2$ be the~preimages on the~threefold $X$ of the~lines $\ell_1$ and $\ell_2$, respectively.
Then it follows from Corollary~\ref{corollary:30-lines-small-orbits} that $C_1$ and $C_2$ are smooth irreducible  elliptic curves.
In particular, the~curves $C_1$ and $C_2$ are disjoint and
$$
\mathscr{Z}=C_{1}+C_{2}.
$$
Let $f\colon\widetilde{X}\to X$ be the~blow up of the~curves $C_1$ and $C_2$,
let $E_1$ and $E_2$ be the~$f$-exceptional surfaces such that $f(E_1)=C_1$ and $f(E_2)=C_2$,
and let $\widetilde{M}$ be the~proper transform on the~threefold $\widetilde{X}$ of the~surface $M$.
Then $|f^*(2H)-E_1-E_2|$ is base point free, so
\begin{multline*}
0\leqslant\big(f^*(2H)-E_1-E_2\big)^2\cdot\widetilde{M}=\\
=\big(f^*(2H)-E_1-E_2\big)^2\cdot\big(f^*(nH)-\mathrm{mult}_{Z}\big(M\big)(E_1+E_2)\big)=4n-8\mathrm{mult}_{Z}\big(M\big),
\end{multline*}
which contradicts \eqref{equation:mult-big}. This shows that case (a) is impossible.

Suppose we are in case (b). Without loss of generality, we may assume that $\pi(\mathscr{Z})\subset\mathcal{Q}_1$.
Let $S$ be the~preimage of the~quadric surface $\mathcal{Q}_1$ via the~double cover $\pi$.
Then it follows from Corollary~\ref{corollary:curves} that $S$ is an irreducible normal surface such that
\begin{itemize}
\item[(i)] either $S$ is a smooth K3 surface,
\item[(ii)] or $S$ is a singular K3 surface that has $8$ or $16$ ordinary double points.
\end{itemize}
Note that $\mathscr{Z}\subset S$ by construction.
Let $\mathcal{C}$ be the~preimage in $X$ of a sufficiently general line in the~quadric $\mathcal{Q}_1$
that intersect the~line $\pi(Z)$. Then $\mathcal{C}$ is a smooth irreducible elliptic curve,
which is contained in the~smooth locus of the~K3 surface $S$. Observe that $H\cdot \mathcal{C}=2$.
Moreover, we also have $|\mathcal{C}\cap\mathscr{Z}|\geqslant 4$.
Thus, since $\mathcal{C}\not\subset\mathrm{Supp}(M)$, we get
$$
2n=nH\cdot \mathcal{C}=M\cdot \mathcal{C}\geqslant\sum_{O\in\mathcal{C}\cap\mathscr{Z}}\mathrm{mult}_O(M)\geqslant\mathrm{mult}_Z\big(M\big)|\mathcal{C}\cap\mathscr{Z}|\geqslant 4\mathrm{mult}_Z\big(M\big),
$$
which contradicts \eqref{equation:mult-big}. This shows that case (b) is also impossible.

Hence, we see that $Z$ is a point. In particular, the~pair $(X,\frac{2}{n}\mathcal{M})$ is canonical away from finitely many points.
Now, arguing as in the~proof of Proposition~\ref{proposition:80-49}, we get
$$
Z\not\in\mathrm{Sing}(X).
$$

Let $M_1$ and $M_2$ be two general surfaces in $\mathcal{M}$. Using \cite{Pukhlikov} or \cite[Corollary 3.4]{Co00}, we get
\begin{equation}
\label{equation:Corti-2}
\big(M_1\cdot M_2\big)_Z>n^2.
\end{equation}

Let $P=\pi(Z)$. Then, arguing as in the~proof of Proposition~\ref{proposition:48-50}, we get $|\mathrm{Orb}_{\mathbb{H}}(P)|\ne 4$.

We claim that $|\mathrm{Orb}_{\mathbb{H}}(P)|\ne 8$.
Indeed, suppose $|\mathrm{Orb}_{\mathbb{H}}(P)|=8$.
Then
$$
P\in\ell_1\cup\ell_2\cup\cdots\cup\ell_{30}.
$$
Without loss of generality, we may assume that $P\in\ell_1$.
Let $C_1$ and $C_2$ be the~preimages on the~threefold $X$ of the~lines $\ell_1$ and $\ell_2$, respectively.
Recall that $C_1$ and $C_2$ are smooth irreducible elliptic curves,
and the~curve $C_1+C_2$ is $\mathbb{H}^\rho$-irreducible.
Write
$$
M_1\cdot M_2=m(C_1+C_2)+\Delta,
$$
where $m$ is a non-negative integer, and $\Delta$ is an effective one-cycle whose support does not contain the~curves $C_1$ and $C_2$.
Then $m\leqslant\frac{n^2}{2}$, because
$$
2n^2=H\cdot M_1\cdot M_2=mH\cdot(C_1+C_2)+H\cdot\Delta\leqslant mH\cdot(C_1+C_2)=4m.
$$
On the~other hand, since $C_1$ and $C_2$ are smooth curves, it follows from \eqref{equation:Corti-2} that
\begin{equation}
\label{equation:Corti-3}
\mathrm{mult}_O\big(\Delta\big)>n^2-m
\end{equation}
for every point $O\in\mathrm{Orb}_{\mathbb{H}^\rho}(Z)$. Note also that $Z\in C_1$ and $|\mathrm{Orb}_{\mathbb{H}^\rho}(Z)|\geqslant 8$.

Let $\mathcal{D}$ be the~linear subsystem in $|2H|$ that consists of surfaces passing through $C_1\cup C_2$.
Then, as we already implicitly mentioned, the~linear system $\mathcal{D}$ does not have base curves except for $C_1$ and $C_2$.
Therefore, if $D$ is a general surface in $\mathcal{D}$, then $D$ does not contain irreducible components of the~one-cycle $\Delta$,
so \eqref{equation:Corti-3} gives
\begin{multline*}
4n^2-8m=D\cdot\Delta\geqslant\sum_{O\in \mathrm{Orb}_{\mathbb{H}^\rho}(Z)}\mathrm{mult}_O\big(\Delta\big)=\\
=|\mathrm{Orb}_{\mathbb{H}^\rho}(Z)|\mathrm{mult}_Z\big(\Delta\big)>|\mathrm{Orb}_{\mathbb{H}^\rho}(Z)|(n^2-m)\geqslant 8(n^2-m),
\end{multline*}
which is absurd. This shows that $|\mathrm{Orb}_{\mathbb{H}}(P)|\ne 8$.

In particular, we see that $|\mathrm{Orb}_{\mathbb{H}^\rho}(Z)|=|\mathrm{Orb}_{\mathbb{H}}(P)|=16$ and $P\not\in\ell_1\cup\ell_2\cup\cdots\cup\ell_{30}$.

We claim that $P\not\in\mathcal{Q}_1\cup\mathcal{Q}_2\cup\cdots\cup\mathcal{Q}_{10}$.
Indeed, suppose that $P\in\mathcal{Q}_1\cup\mathcal{Q}_2\cup\cdots\cup\mathcal{Q}_{10}$.
Without loss of generality, we may assume that
$$
\pi(Z)=P\in\mathcal{Q}_1.
$$
As above, denote by $S$ the~preimage of the~quadric surface $\mathcal{Q}_1$ via the~double cover $\pi$.
Then $S$ is a K3 surface with at most ordinary double singularities,
and it follows from the~inversion of adjunction \cite[Theorem~5.50]{KoMo98}
that $(S,\frac{2}{n}\mathcal{M}\vert_{S})$ is not log canonical at $Z$.
Let $\lambda$ be the~largest rational number such that  $(S,\lambda\mathcal{M}\vert_{S})$ is log canonical at $Z$.
Then
$$
\mathrm{Orb}_{\mathbb{H}^\rho}(Z)\subseteq\mathrm{Nklt}\big(S,\lambda\mathcal{M}\vert_{S}\big).
$$
Note that the~locus $\mathrm{Nklt}(S,\lambda\mathcal{M}\vert_{S})$ is $\mathbb{H}^\rho$-invariant, because $\mathcal{M}$ and $S$ are $\mathbb{H}^\rho$-invariant.

Suppose $\mathrm{Nklt}(S,\lambda\mathcal{M}\vert_{S})$ contains an~$\mathbb{H}^\rho$-irreducible curve $C$ that passes through~$Z$.
This~means that $\lambda\mathcal{M}\vert_{S}=C+\Omega$,
where $\Omega$ is an effective $\mathbb{Q}$-linear system on $S$. Then
$$
H\cdot C\leqslant H\cdot(C+\Omega)=4n\lambda <8,
$$
hence $\pi(C)$ is a union of $4$ disjoint lines in $\mathcal{Q}_1$ by Lemma~\ref{lemma:curves}, since $P\not\in\ell_1\cup\ell_2\cup\cdots\cup\ell_{30}$.
Let~$\mathcal{C}$ be the~preimage in $X$ of a general line in  $\mathcal{Q}_1$ that intersect $\pi(C)$.
Then
$$
4\leqslant \mathcal{C}\cdot C\leqslant \mathcal{C}\cdot(C+\Omega)=\lambda n\big(H\cdot\mathcal{C}\big)=2\lambda n<4,
$$
which is absurd. So, the~locus $\mathrm{Nklt}(S,\lambda\mathcal{M}\vert_{S})$ contains no  curves that pass through $Z$.

Let $\mathcal{I}_S$ be the~multiplier ideal sheaf of the~pair $(S,\lambda\mathcal{M}\vert_{S})$,
let $\mathcal{L}_S$ be the~corresponding subscheme in $S$. Then
$$
\mathrm{Supp}\big(\mathcal{L}_S\big)=\mathrm{Nklt}\big(S,\lambda\mathcal{M}\vert_{S}\big).
$$
Now, applying Nadel's vanishing theorem \cite[Theorem~9.4.8]{Lazarsfeld}, we get
$$
h^1\big(S,\mathcal{I}_S\otimes\mathcal{O}_{S}(2H\vert_{S})\big)=0.
$$
Now, using the~Riemann--Roch theorem and Serre's vanishing, we obtain
$$
10=h^0\big(S,\mathcal{O}_{S}(2H\vert_{S})\big)\geqslant h^0\big(\mathcal{O}_{\mathcal{L}_S}\otimes\mathcal{O}_{S}(2H\vert_{S})\big)\geqslant|\mathrm{Orb}_{\mathbb{H}^\rho}(Z)|.
$$
because $\mathcal{L}_{S}$ has at least $|\mathrm{Orb}_{\mathbb{H}^\rho}(Z)|$
disjoint zero-dimensional components, whose supports are points in $\mathrm{Orb}_{\mathbb{H}^\rho}(Z)$,
because $\mathrm{Orb}_{\mathbb{H}^\rho}(Z)\subseteq\mathrm{Nklt}(S,\lambda\mathcal{M}\vert_{S})$,
and $\mathrm{Nklt}(S,\lambda\mathcal{M}\vert_{S})$ does not contain curves that are not disjoint from $\mathrm{Orb}_{\mathbb{H}^\rho}(Z)$.
Hence, we see that  $|\mathrm{Orb}_{\mathbb{H}^\rho}(Z)|\leqslant 10$,
which is impossible, since $|\mathrm{Orb}_{\mathbb{H}^\rho}(Z)|=16$.
This shows that
$$
\pi(Z)=P\not\in\mathcal{Q}_1\cup\mathcal{Q}_2\cup\cdots\cup\mathcal{Q}_{10}.
$$

Let us summarize what we proved so far. Recall that $\mathcal{M}$ is a mobile $\mathbb{H}^\rho$-invariant linear subsystem in $|nH|$,
the~log pair $(X,\frac{2}{n}\mathcal{M})$ is canonical away from finitely many points,
but the~singularities of the~pair $(X,\frac{2}{n}\mathcal{M})$ are not canonical at the~point $Z\in X$ such that
\begin{itemize}
\item $Z\not\in\mathrm{Sing}(X)$,
\item $\pi(Z)\not\in\ell_1\cup\ell_2\cup\cdots\cup\ell_{30}$,
\item $\pi(Z)\not\in\mathcal{Q}_1\cup\mathcal{Q}_2\cup\cdots\cup\mathcal{Q}_{10}$,
\item $|\mathrm{Orb}_{\mathbb{H}^\rho}(Z)|=16$,
\item $|\mathrm{Orb}_{\mathbb{H}}(\pi(Z))|=16$.
\end{itemize}
By Lemma~\ref{lemma:curves}, $\pi(Z)$ is not contained in any $\mathbb{H}$-invariant curve whose degree is at most~$7$.
Let us use this and Nadel's vanishing \cite[Theorem~9.4.8]{Lazarsfeld} to derive a contradiction.

As in the~proofs of Propositions~\ref{proposition:80-49} and \ref{proposition:48-50},
we observe that  $(X,\frac{3}{n}\mathcal{M})$ is not log canonical at the~point $Z$, because $X$ is smooth at $Z$.
Let $\mu$ be the~largest rational number such~that the~log pair $(X,\mu\mathcal{M})$ is log canonical at $Z$.
Then $\mu<\frac{3}{n}$ and
$$
\mathrm{Orb}_{\mathbb{H}^\rho}(Z)\subseteq\mathrm{Nklt}\big(X,\mu\mathcal{M}\big).
$$
Moreover, if the~locus $\mathrm{Nklt}(X,\mu\mathcal{M})$ contains an $\mathbb{H}^\rho$-irreducible curve $C$,
then arguing as in the~proof of Proposition~\ref{proposition:80-49}, we see that
$$
\mathrm{deg}\big(\pi(C)\big)\leqslant H\cdot C\leqslant 4,
$$
which implies that the~curve $C$ does not pass through $Z$.
Hence, we conclude  that~every point of the~orbit $\mathrm{Orb}_{\mathbb{H}^\rho}(Z)$ is an isolated irreducible component of the~locus $\mathrm{Nklt}(X,\mu\mathcal{M})$.

Let $\mathcal{I}$ be the~multiplier ideal sheaf of the~pair $(X,\mu\mathcal{M})$,
and let $\mathcal{L}$ be the~corresponding subscheme in $X$. Then
$$
\mathrm{Supp}\big(\mathcal{L}\big)=\mathrm{Nklt}\big(X,\mu\mathcal{M}\big),
$$
so the~subscheme $\mathcal{L}$ contains at least $|\mathrm{Orb}_{\mathbb{H}^\rho}(Z)|=16$ zero-dimensional components
whose supports are points in the~orbit $\mathrm{Orb}_{\mathbb{H}^\rho}(Z)$.
On the~other hand, we have
$$
h^1\big(X,\mathcal{I}\otimes\mathcal{O}_{X}(H)\big)=0
$$
by Nadel's vanishing theorem \cite[Theorem~9.4.8]{Lazarsfeld}. This gives
$$
4=h^0\big(X,\mathcal{O}_{X}(H)\big)\geqslant h^0\big(\mathcal{O}_{\mathcal{L}}\otimes\mathcal{O}_{X}(H)\big)\geqslant |\mathrm{Orb}_{\mathbb{H}^\rho}(Z)|=16,
$$
which is absurd. The obtained contradiction completes the~proof of Theorem~\ref{theorem:main}.
\end{proof}

Let us conclude this paper with one~application of Theorem~\ref{theorem:main},
which was the~initial motivation for this paper --- we were looking for various embeddings $\mumu_2^4\rtimes\mumu_3\hookrightarrow\mathrm{Bir}(\mathbb{P}^3)$.

\begin{example}[{cf. Examples~\ref{example:recover-curve-from-quartic}, \ref{example:48-50}, \ref{example:48-50-class-group}}]
\label{example:48-50-final}
Let $G_{48,50}$ be the~subgroup in $\mathrm{PGL}_4(\mathbb{C})$ generated~by
\begin{multline*}
A_1=\begin{pmatrix}
-1 & 0 & 0 & 0\\
0 & 1 & 0 & 0\\
0 & 0 & -1 & 0\\
0 & 0 & 0 & 1
\end{pmatrix},
A_2=\begin{pmatrix}
-1 & 0 & 0 & 0\\
0 & -1 & 0 & 0\\
0 & 0 & 1 & 0\\
0 & 0 & 0 & 1
\end{pmatrix},\\
A_3=\begin{pmatrix}
0 & 1 & 0 & 0\\
1 & 0 & 0 & 0\\
0 & 0 & 0 & 1\\
0 & 0 & 1 & 0
\end{pmatrix},
A_4=\begin{pmatrix}
0 & 0 & 0 & 1\\
0 & 0 & 1 & 0\\
0 & 1 & 0 & 0\\
1 & 0 & 0 & 0
\end{pmatrix},
A_5=\begin{pmatrix}
0 & 0 & 1 & 0\\
1 & 0 & 0 & 0\\
0 & 1 & 0 & 0\\
0 & 0 & 0 & 1
\end{pmatrix}.
\end{multline*}
Then one can check that $G_{48,50}\cong\mumu_2^4\rtimes\mumu_3$ and the~GAP ID of the~group $G_{48,50}$ is [48,50].
For~every $t\in\mathbb{C}\setminus\{\pm 1,\pm\sqrt{3}i\}$,
let $S_t$ be the~quartic surface in $\mathbb{P}^3$   given by the~equation~\eqref{equation:Kummer-quartic-surface-48-50},
i.e.~the surface $S_t$ is the~quartic surface in $\mathbb{P}^3$ given by
$$
x_0^4+x_1^4+x_2^4+x_3^4-(t^2+1)(x_0^2x_1^2+x_2^2x_3^2+x_0^2x_2^2+x_1^2x_3^2+x_0^2x_3^2+x_1^2x_2^2)+2(t^3+3t)x_0x_1x_2x_3=0.
$$
Then $S_t$ is $G_{48,50}$-invariant, and $S_t$ has $16$ ordinary double singularities (see Example~\ref{example:recover-curve-from-quartic}).
Now, let $X_t$ be the~hypersurface in $\mathbb{P}(1,1,1,1,2)$ that is given by
$$
w^2=x_0^4+x_1^4+x_2^4+x_3^4-(t^2+1)(x_0^2x_1^2+x_2^2x_3^2+x_0^2x_2^2+x_1^2x_3^2+x_0^2x_3^2+x_1^2x_2^2)+2(t^3+3t)x_0x_1x_2x_3,
$$
where we consider $x_0$, $x_1$, $x_2$, $x_3$ as homogeneous coordinates on $\mathbb{P}(1,1,1,1,2)$ of weight~$1$,
and $w$ is a coordinate of weight $2$.
Consider the~faithful action $G_{48,50}\curvearrowright X_t$ given~by
\begin{align*}
A_1\colon [x_0:x_1:x_2:x_3:w]&\mapsto[-x_0:x_1:-x_2:x_3:w],\\
A_2\colon [x_0:x_1:x_2:x_3:w]&\mapsto[-x_0:x_1:-x_2:x_3:w],\\
A_3\colon [x_0:x_1:x_2:x_3:w]&\mapsto[x_1:x_2:x_3:x_2:w],\\
A_4\colon [x_0:x_1:x_2:x_3:w]&\mapsto[x_3:x_2:x_1:x_0:w],\\
A_5\colon [x_0:x_1:x_2:x_3:w]&\mapsto[x_1:x_2:x_0:x_3:w].
\end{align*}
Since the~threefold $X_t$ is $G_{48,50}$-invariant, this gives an embedding $G_{48,50}\hookrightarrow\mathrm{Aut}(X_t)$.
Then it follows from Theorem~\ref{theorem:main} that the~threefold $X_t$ is $G_{48,50}$-birationally super-rigid.
In particular, for any $t_1\ne t_2$ in $\mathbb{C}\setminus\{\pm 1,\pm\sqrt{3}i\}$,
the following conditions are equivalent:
\begin{itemize}
\item the~threefolds $X_{t_1}$ and $X_{t_2}$ are $G_{48,50}$-birational;
\item the~surfaces $S_{t_1}$ and $S_{t_2}$ are projectively equivalent.
\end{itemize}
Recall that $X_t$ is rational.
For $t\in\mathbb{C}\setminus\{\pm 1,\pm\sqrt{3}i\}$, fix a birational map $\chi_t\colon\mathbb{P}^3\dasharrow X_t$,
and consider the~monomorphism $\eta_t\colon G_{48,50}\hookrightarrow\mathrm{Bir}(\mathbb{P}^3)$ that is given by $g\mapsto \chi_t^{-1}\circ g\circ\chi_t$.
Then, for any $t_1\ne t_2$ in $\mathbb{C}\setminus\{\pm 1,\pm\sqrt{3}i\}$, we have the~following assertion:
$$\hspace*{-0.1cm}
\eta_{t_1}(G_{48,50})\ \text{and}\ \eta_{t_1}(G_{48,50})\ \text{are conjugate in} \ \mathrm{Bir}(\mathbb{P}^3) \iff X_{t_1}\ \text{and}\ X_{t_2}\ \text{are $G_{48,50}$-birational}.
$$
Thus, if $t_1\ne t_2$ are general, then $\eta_{t_1}(G_{48,50})$ and $\eta_{t_1}(G_{48,50})$ are not conjugate in~$\mathrm{Bir}(\mathbb{P}^3)$.
Similarly, we see that $\eta_t(G_{48,50})$ is not conjugate in~$\mathrm{Bir}(\mathbb{P}^3)$ to the~group  $G_{48,50}\subset\mathrm{PGL}_4(\mathbb{C})$,
which also follows from \cite{CheltsovSarikyan}.
Can we show this using other obstructions \cite{BogomolovProkhorov,KontsevichPestunTschinkel,HassettKreschTschinkel}?
\end{example}

\medskip

\textbf{Acknowledgments.}
We would like to thank Michela Artebani,
Arnaud Beauville, Fabrizio Catanese, Ciro Ciliberto, Igor Dolgachev, Alexander Kuznetsov, Yuri Prokhorov, Eduardo Luis Sola Conde, Andrei Trepalin, 	
Alessandro Verra for very helpful comments.
We are grateful to Artem Avilov for spotting a gap in the  proof of Corollary~\ref{corollary:curves}.


\begin{thebibliography}{23}

\bibitem{Antonsen}
Y.~Antonsen, \emph{Desmic systems and desmic surfaces}, Master's Thesis, University of Oslo, 2016.

\bibitem{ArrondoBertoliniTurrini}
E.~Arrondo, M.~Bertolini, C.~Turrini, \emph{A focus on focal surfaces}, Asian J. Math. \textbf{5} (2001), 535--560

\bibitem{Avilov2019}
A.~Avilov, \emph{Biregular and birational geometry of quartic double solids with $15$ nodes}, Izv. Math. \textbf{83} (2019), 415--423.

\bibitem{Avilov}
A.~Avilov, \emph{Birational rigidity of $G$-del Pezzo threefolds of degree $2$}, to appear.

\bibitem{Blichfeldt1917}
H.~Blichfeldt, \emph{Finite collineation groups}, University of Chicago Press, Chicago, 1917.

\bibitem{BogomolovProkhorov}
F.~Bogomolov, Yu.~Prokhorov, \emph{On stable conjugacy of finite subgroups of the~plane Cremona group, I}, Cent. European J. Math. \textbf{11} (2013), 2099--2105.

\bibitem{Magma}
W.~Bosma, J.~Cannon, C.~Fieker, A.~Steel, \emph{Handbook of Magma functions}, (2010), 5017 pages.

\bibitem{Caporali}
E.~Caporali, \emph{Sui complessi e sulle congruenze si 2o grado}, Mem. Accad. Lincei \textbf{275} (1877--78), 54--85.

\bibitem{CGLR}
G.~Cardona, J.~Gonzalez, J.~C.~Lario, A.~Rio, \emph{On curves of genus 2 with Jacobian of $\mathrm{GL}_2$-type}, Manuscripta Math. \textbf{98} (1999), 37-- 54.

\bibitem{CasagrandeJahnkeRadloff}
C.~Casagrande, P.~Jahnke, I.~Priska, \emph{On the~Picard number of almost Fano threefolds with pseudo-index $>1$}, Int. J. Math. \textbf{19} (2008), 173--191.

\bibitem{CasselsFlynn}
J.~Cassels, V.~Flynn, \emph{Prolegomena to a middlebrow arithmetic of curves of genus $2$}, Cambridge University Press, 1996.

\bibitem{Catanese}
F.~Catanese, \emph{Kummer quartic surfaces, strict self-duality, and more}, arXiv:2101.10501, 2021.

\bibitem{CheltsovUMN}
I.~Cheltsov, \emph{Birationally rigid Fano varieties},  Russ. Math. Surv. \textbf{60} (2005), 875--965.

\bibitem{CheltsovPrzyjalkowskiShramov2016}
I.~Cheltsov, V.~Przyjalkowski, C.~Shramov, \emph{Quartic double solids with icosahedral symmetry}, Eur. J. Math. \textbf{2}, 96--119 (2016).

\bibitem{CheltsovPrzyjalkowskiShramov}
I.~Cheltsov, V.~Przyjalkowski, C.~Shramov, \emph{Which quartic double solids are rational?}, J. Algebr. Geom. \textbf{28} (2019), 201--243.

\bibitem{CheltsovSarikyan}
I.~Cheltsov, A.~Sarikyan, \emph{Equivariant pliability of the~projective space}, arXiv:2202.09319 (2022).

\bibitem{CheltsovShramov}
I.~Cheltsov, C.~Shramov, \emph{Cremona groups and the~icosahedron}, CRC Press, Boca Raton, FL, 2016.

\bibitem{CheltsovShramov2017}
I.~Cheltsov, C.~Shramov, \emph{Finite collineation groups and birational rigidity}, Sel. Math. \textbf{25}, Paper No. 71, 68 p. (2019).

\bibitem{Clemens}
H.~Clemens, \emph{Double solids}, Adv. in Math. \textbf{47} (1983), 107--230.

\bibitem{Co00}
A.~Corti, \emph{Singularities of linear systems and 3-fold birational geometry},
Explicit birational geometry of 3-folds. Cambridge University Press. Lond. Math. Soc. Lect. Note Ser. \textbf{281}, 259--312 (2000).

\bibitem{Cutkosky}
S.~Cutkosky, \emph{Elementary contractions of Gorenstein threefolds}, Math. Ann. \textbf{280} (1988), 521--525

\bibitem{Dolgachev}
I.~Dolgachev, \emph{Classical Algebraic Geometry: a modern view},  Cambridge University Press, 2012.

\bibitem{Dolgachev2020}
I.~Dolgachev, \emph{Kummer surfaces: 200 years of study}, Notices Am. Math. Soc. \textbf{67} (2020), 1527--1533.

\bibitem{DolgachevIskovskikh}
I.~Dolgachev, V.~Iskovskikh, \emph{Finite subgroups of the~plane Cremona group}, Progress in Mathematics \textbf{269}, Birkh\"auser Boston, Boston, MA, 2009, 443--548.

\bibitem{Edge}
W.~Edge, \emph{A new look at the Kummer surface}, Can. J. Math. \textbf{19} (1967), 952--967.

\bibitem{Eklund}
D.~Eklund, \emph{Curves on Heisenberg invariant quartic surfaces in projective 3-space}, Eur. J. Math. \textbf{4} (2018), 931--952.

\bibitem{Endrass}
S.~Endrass, \emph{On the~divisor class group of double solids}, Manuscr. Math. \textbf{99} (1999), 341--358.

\bibitem{Fujita}
T.~Fujita, \emph{On del Pezzo fibrations over curves}, Osaka J. Math. \textbf{27} (1990), 229--245.

\bibitem{GeemenPreviato}
B.~van~Geemen, E.~Previato, \emph{On the Hitchin system}, Duke Math. J. \textbf{85} (1996), 659--683.

\bibitem{Gonzalez-Dorrego}
M.~Gonzalez-Dorrego, \emph{$(16,6)$ configurations and geometry of Kummer surfaces in $\mathbb{P}^3$}, Memoirs of the~AMS \textbf{512}, 1994.

\bibitem{GriffithsHarris}
P.~Griffiths, J.~Harris, \emph{Principles of algebraic geometry} , Wiley-Interscience, 1978.

\bibitem{Hashimoto}
K.~Hashimoto, \emph{Finite symplectic actions on the~K3 lattice}, Nagoya Math. J. \textbf{206} (2012), 99--153.

\bibitem{Hosoh}
T.~Hosoh, \emph{Automorphism groups of quartic del Pezzo surfaces}, J. Algebra \textbf{185} (1996), 374--389.

\bibitem{HassettKreschTschinkel}
B.~Hassett, A.~Kresch, Yu.~Tschinkel, \emph{Symbols and equivariant birational geometry in small dimensions},
Rationality of Varieties, Progress in Mathematics \textbf{342}, Birkh\"auser (2021), 201--236.

\bibitem{Hudson}
R.~Hudson, \emph{Kummer's quartic surface}, Cambridge University Press, 1905.

\bibitem{Jessop}
C.~Jessop, \emph{A treatise on the~line complex}, Cambridge University Press, 1903.

\bibitem{Keum1997}
J.~H.~Keum, \emph{Automorphisms of Jacobian Kummer surfaces}, Compos. Math. \textbf{107} (1997), 269--289.

\bibitem{Klein}
F.~Klein, \emph{Ueber Configurationen, welche der Kummer\'{ }schen Fl\"che zugleich eingeschrieben und umgeschrieben sind}, Math. Ann. \textbf{27} (1886), 106--142.

\bibitem{KoMo98}
J.~Koll\'ar, S.~Mori, \emph{Birational geometry of algebraic varieties},  Cambridge University Press (1998).

\bibitem{Kondo}
Sh.~Kondo, \emph{The automorphism group of a generic Jacobian Kummer surface}, J. Algebr. Geom. \textbf{7} (1998), 589--609.

\bibitem{KontsevichPestunTschinkel}
M.~Kontsevich, V.~Pestun, Yu.~Tschinkel, \emph{Equivariant birational geometry and modular symbols}, to appear in  J. Eur. Math. Soc.

\bibitem{Kummer1864}
E.~Kummer, \emph{Ueber die Fl\"achen vierten Grades mit sechszehn singul\"aren Punkten}, Monatsberichte Berliner Akademie, \textbf{6} (1864), 246--260.

\bibitem{Kummer1865}
E.~Kummer, \emph{Ueber algebraische Strahlensysteme, in's Besondere die der ersten und der zweiten Ordnung}, Monatsberichte Berliner Akademie \textbf{7} (1865), 288--293.

\bibitem{Kummer1866}
E.~Kummer, \emph{Ueber die algebraische Strahlensysteme insbesondere die erste und zweiten Ordnung}, Monatsberichte Berliner Akademie \textbf{8} (1866), 1--120.

\bibitem{Lazarsfeld}
R.~Lazarsfeld, \emph{Positivity in Algebraic Geometry} \textbf{II},  Springer-Verlag, Berlin, 2004.

\bibitem{Mori}
S.~Mori, \emph{Threefolds whose canonical bundles are not numerically effective}, Ann. Math. \textbf{116} (1982), 133--176.

\bibitem{Mukai}
S.~Mukai, \emph{Finite groups of automorphisms of $K3$ surfaces and the Mathieu group}, Invent. Math. \textbf{94} (1988), 183--221.

\bibitem{Nieto}
I.~Nieto, \emph{The singular $H_{2,2}$-invariant quartic surfaces in $\mathbb{P}^3$}, Geom. Dedicata \textbf{57} (1995), 157--170.

\bibitem{PonceSturmfelsTrager}
J.~Ponce, B.~Sturmfels, M.~Trager, \emph{Congruences and concurrent lines in multi-view geometry}, Adv. Appl. Math. \textbf{88} (2017), 62--91.

\bibitem{Prokhorov2013}
Yu.~Prokhorov, \emph{$G$-Fano threefolds. I.},  Adv. Geom. \textbf{13} (2013), 389--418.

\bibitem{Pukhlikov}
A.~Pukhlikov, \emph{Birational automorphisms of Fano hypersurfaces}, Invent. Math. \textbf{134} (1998), 401--426.

\bibitem{Semple-Roth}
J.~Semple, L.~Roth, \emph{Introduction to algebraic geometry}, Oxford University Press, 1949, 446 p.

\bibitem{Shah}
J.~Shah, \emph{Degenerations of K3 surfaces of degree $4$}, Trans. Amer. Math. Soc. \textbf{263} (1981), 271--308.

\bibitem{Skorobogatov}
A.~Skorobogatov, \emph{Del Pezzo surfaces of degree $4$ and their relation to Kummer surfaces}, Enseign. Math. \textbf{56} (2010), 73--85.

\bibitem{SzurekWisniewski}
M.~Szurek, J.~Wisniewski, \emph{Fano bundles over $\mathbb{P}^3$ and $Q_3$}, Pac. J. Math. \textbf{141} (1990), 197--208.

\bibitem{Varley}
R.~Varley, \emph{Weddle's surfaces, Humbert's curves, and a certain 4-dimensional abelian variety}, Am. J. Math. \textbf{108} (1986), 931--951.

\bibitem{Xiao}
G.~Xiao, \emph{Galois covers between K3 surfaces}, Annales de l'Institut Fourier  \textbf{46} (1996) 73--88.

\bibitem{FZ}
H.~Flenner, M.~Zaidenberg, \emph{Locally nilpotent derivations on affine surfaces with a~$\mathbb{G}_m$-action}, Osaka J. Math. \textbf{42} (2005), 931--974.


\end{thebibliography}
\end{document}